\newtheorem{lemma}{Lemma}[section]
\newtheorem{proposition}[lemma]{Proposition}
\newtheorem{theorem}[lemma]{Theorem}
\newtheorem{definition}[lemma]{Definition}
\theoremstyle{definition}
\newtheorem*{ack}{Acknowledgments}
\numberwithin{equation}{section}
\begin{document}
\title{Anisotropic Gauss curvature flow of complete non-compact graphs}
\author[S. Pan]{Shujing Pan}
\author[Y. Wei]{Yong Wei}
\address{School of Mathematical Sciences, University of Science and Technology of China, Hefei 230026, P.R. China}
\email{\href{mailto:psj@ustc.edu.cn}{psj@ustc.edu.cn}}
\email{\href{mailto:yongwei@ustc.edu.cn}{yongwei@ustc.edu.cn}}
%\date{\today}
\subjclass[2010]{53C44; 53C42}
\keywords{Anisotropic, Gauss curvature flow, Wulff shape, Long-time existence}
\begin{abstract}
In this paper, we consider the anisotropic $\alpha$-Gauss curvature flow for complete noncompact convex hypersurfaces in the Euclidean space with the anisotropy determined by a smooth closed uniformly convex Wulff shape. We show that for all positive power $\alpha>0$, if the initial hypersurface is complete noncompact and locally uniformly convex, then the solution of the flow exists for all positive time.
\end{abstract}

\maketitle
\tableofcontents

\section{Introduction}\label{set0}
Let $f\in C^\infty(\mathbb{S}^n)$ be a smooth positive function on the unit sphere $\mathbb{S}^n$ such that $\bar{\nabla}^2f+fg_{\mathbb{S}^n}$ is uniformly positive definite, where $\bar{\nabla}^2f$ denotes the Hessian of $f$ with respect to the round metric $g_{\mathbb{S}^n}$ on $\mathbb{S}^n$. Then $f$ determines a unique smooth uniformly convex hypersurface $\mathcal{W}$ with $f$ being its support function. We call $\mathcal{W}$ the Wulff shape associated with the anisotropy $f\in C^\infty(\mathbb{S}^n)$. When $f\equiv 1$ is a constant, the Wulff shape $\mathcal{W}$ reduces to the unit round sphere $\mathbb{S}^n$ and this is the isotropic case. In this paper, we consider the following anisotropic $\alpha$-Gauss curvature flow  $F:M^n\times[0,T)\to\mathbb{R}^{n+1}$
\begin{equation}\label{flow0}
 \frac{\partial}{\partial t}F(p,t)=-f(\nu)K^{\alpha}(p,t)\nu(p,t),\qquad \alpha>0,
\end{equation}
in the Euclidean space, where $K(p,t)$ is the Gauss curvature of $\Sigma_t=F(M^n,t)$ at
 $F(p,t)$, $\nu(p,t)$ is the unit normal vector at $F(p,t)$ pointing outward of the convex hull of $\Sigma_t$. The anisotropic function $f(\nu)$ is evaluated at the unit normal $\nu$.

In the isotropic case with $f\equiv 1$, the flow \eqref{flow0} is known as the $\alpha$-Gauss curvature flow for a given power $\alpha>0$. The case $\alpha=1$ corresponds to the classical Gauss curvature flow which was proposed by Firey \cite{firey1974shapes} to model the erosion of strictly convex stones as they tumble on a beach. To obtain the model, he assumed that the stones were of uniform density, that their wear was isotropic and that the number of collisions in a region was proportional
to the set of normal directions of the region. Then the rate of wear is
proportional to the Gauss curvature and the process is just the Gauss curvature flow, i.e., the flow \eqref{flow0} with $\alpha=1$ and $f\equiv 1$. For any smooth closed and uniformly convex hypersurface in $\mathbb{R}^{n+1}$, Tso \cite{Tso85} proved that there exists a unique smooth solution to the Gauss curvature flow on a finite maximal time interval $0\leq t<T$ and the solution remains uniformly convex and contracts to a point as $t\to T$. This result was extended to $\alpha$-Gauss curvature flow for all $\alpha>0$ by Chow \cite{chow1985deforming}. The asymptotic behavior of the solution as $t\to T$ was studied later in many works. In particular, Andrews \cite{andrews1999gauss} proved that any smooth closed and uniformly convex surface in $\mathbb{R}^3$ contracts along the Gauss curvature flow to a round point as $t\to T$. The case
 $\alpha={1}/{(n+2)}$ corresponds to the affine normal flow and  Andrews \cite{andrews1996contraction} proved that the solution converges to a normalized ellipsoid after reparametrizing and rescaling about the final point. The  superaffine case $\alpha>1/(n+2)$ of the flow \eqref{flow0} was also widely studied and the asymptotical shape is always spherical in this case, see \cite{andrews2000motion,andrews2016flow,chow1985deforming,brendle2017asymptotic,guan2017entropy}. We refer the readers to \cite[Chapters 15-17]{andrews2020extrinsic} for an excellent exposition of the smooth closed solutions of the $\alpha$-Gauss curvature flow.

Allowing some anisotropy in the material of the stone, the wearing process of the tumbling stones is modeled as the anisotropic Gauss curvature flow \eqref{flow0} for some positive function $f$ on the sphere $\mathbb{S}^n$. Andrews \cite{andrews2000motion} studied the anisotropic $\alpha$-Gauss curvature flow \eqref{flow0} incorporated with a general anisotropy $f$ and showed that the flow \eqref{flow0} with $\alpha>0$ contracts any smooth closed convex hypersurface to a point in finite time. Moreover, for $\alpha\in({1}/{(n+2)},{1}/{n}]$ he proved that the solution converges to a self-similar shrinking solution after rescaling.  In his proof, the condition $\alpha\leq{1}/{n}$ is necessary to get the lower bound of the Gauss curvature. Recently, curvature estimate of the anisotropic Gauss curvature flow for $\alpha\in (0,\frac{1}{n-1}]$ was studied by Choi, Kim and Lee \cite{choi2023LpMinkowski}. Weak convergence in certain sense of the flow \eqref{flow0} was studied by B\"{o}r\"{o}czky and Guan \cite{BG24} and was applied to study the weak solution of $L_p$ Minkowski problem. The smooth convergence of a smooth closed convex solution of the anisotropic Gauss curvature flow \eqref{flow0} for $\alpha>{1}/{n}$ is not yet known and is still open.

 In this paper, we care about the complete non-compact case. We first review some results in the literature on this aspect. Ecker and Huisken \cite{ecker1989mean,ecker1991interior} studied the evolution of entire graphs by mean curvature flow, and proved that if the initial hypersurface is a graph of locally Lipschitz continuous function on $\mathbb{R}^n$, then the solution exists for all time. The mean curvature flow for complete graph over a bounded domain was studied by S\'{a}ez and Schn\"{u}rer \cite{SS14}. Recently, the evolution of complete non-compact graphs by fully nonlinear curvature flow has been considered. Choi and Daskalopoulos \cite{choi2016q_k} extended the techniques of Ecker and Huisken to the $\mathcal{Q}_k$ flow and proved the long-time existence under the weak convexity assumption. Choi, Daskalopoulos, Kim, and Lee \cite{choi2019evolution} extended the method to the $\alpha$-Gauss curvature flow (i.e., the flow \eqref{flow0} with $f\equiv 1$). For all $\alpha>0$, if the initial surface is a complete noncompact and locally uniformly convex hypersurface embedded in $\mathbb{R}^{n+1}$,
  they proved the global existence of the solution. The asymptotical behavior of the flow for $\alpha>{1}/{2}$ was studied later by Choi, Choi and Daskalopoulos \cite{choi2022convergence}. Alessandroni and Sinestrari \cite{AS15} studied
 the evolution of complete non-compact convex graphs by general symmetric curvature function $F$ under some conditions on $F$. The evolution of complete noncompact hypersurfaces by inverse mean curvature flow was also studied by Daskalopoulos and Huisken \cite{DH22} and Choi and Daskalopoulos \cite{CD-IMCF}.

 A natural question is whether the techniques can be extended to complete noncompact hypersurfaces evolving by the anisotropic $\alpha$-Gauss curvature flow (\ref{flow0}) for general anisotropy $f\in C^{\infty}(\mathbb{S}^n)$.  In this work, we consider the case $f$ being the support function of a smooth closed, uniformly convex  Wulff shape $\mathcal{W}$ in $\mathbb{R}^{n+1}$. We shall show the global existence of the flow \eqref{flow0} for complete non-compact and locally uniformly convex hypersurfaces.  In order to state our result, we recall the following definition of locally uniformly convex hypersurfaces as in \cite{choi2019evolution}.
 \begin{definition}
    We denote by $C^2_{\mathcal{H}}(\mathbb{R}^{n+1})$ the class of $C^2$ complete hypersurfaces embedded in $\mathbb{R}^{n+1}$.
    For a convex hypersurface $M\in C^2_{\mathcal{H}}(\mathbb{R}^{n+1})$, we denote by $\lambda_{\min}^M(p)$ the smallest principal
    curvature of $M$ at $p\in M$. Given any complete convex hypersurface $\Sigma$, which may not be necessarily of $C^2$, and $p\in\Sigma$, we define $\lambda_{\min}^{\Sigma}(p)$
    by
    \begin{equation*}
  \lambda_{\min}^{\Sigma}(p):=\sup\{\lambda_{\min}^M(p):p\in M,~ M\in C^2_{\mathcal{H}}(\mathbb{R}^{n+1}), \Sigma\subset \text{the convex hull of}\ M\}
    \end{equation*}
    and we say that (i). $\Sigma$ is strictly convex, if $\lambda_{\min}^{\Sigma}(p)>0$ for all points $p\in\Sigma$; (ii). $\Sigma$ is uniformly convex, if there is a constant $m>0$ satisfying $\lambda_{\min}^{\Sigma}(p)\geq m>0$ for all $p\in\Sigma$; and (iii). $\Sigma$ is locally uniformly convex, if for any compact subset $A\subset\mathbb{R}^{n+1}$, there is a constant $m_{A}>0$ satisfying $\lambda_{\min}^{\Sigma}(p)\geq m_A$ for all $p\in\Sigma\cap A$.
  \end{definition}
Our main result is stated as follows:
 \begin{theorem}\label{main thm}
  Assume that $\mathcal{W}$ is a smooth closed and uniformly convex Wulff shape which encloses a convex body $\widehat{\mathcal{W}}$. Let $f\in C^{\infty}(\mathbb{S}^n)$ be the support function of $\mathcal{W}$ with respect to the point $z_0\in \widehat{\mathcal{W}}$ satisfying the property in Proposition \ref{shift point}.
    Assume that $\Sigma_0$ is a complete, non-compact, locally uniformly convex hypersurface embedded in $\mathbb{R}^{n+1}$ given by an immersion
    $F_0: M^n\to\mathbb{R}^{n+1}$ with $\Sigma_0=F_0(M^n)$. Then for any $\alpha>0$, there exists a complete, non-compact, smooth and strictly
    convex solution $\Sigma_t:=F(M^n,t)$ of the flow
     \begin{equation}\label{flow}
 \left\{\begin{aligned}
   &\frac{\partial}{\partial t}F(p,t)=-f(\nu)K^{\alpha}(p,t)\nu(p,t),\qquad \alpha>0,\\
   &\lim_{t\to0}F(p,t)=F_0(p,t),
   \end{aligned}
   \right.
 \end{equation}
     which is defined for all time $0<t<+\infty$.
 \end{theorem}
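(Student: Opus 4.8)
We outline the strategy; the details occupy the rest of the paper. The solution $\Sigma_t$ will be obtained as a $C^\infty_{\mathrm{loc}}$ limit of smooth closed solutions, in the spirit of Ecker--Huisken for mean curvature flow and of Choi--Daskalopoulos--Kim--Lee \cite{choi2019evolution} for the isotropic $\alpha$-Gauss curvature flow, with the additional work needed to carry the anisotropy $f$ through all the estimates and to run every maximum-principle argument locally in space. First I would exhaust $\Sigma_0$ by a sequence $\Sigma_0^k$ of smooth, closed, uniformly convex hypersurfaces that coincide with $\Sigma_0$ on an increasing family of compact sets (obtained, e.g., by truncating the convex body associated with a large piece of $\Sigma_0$ and rounding off, or by attaching uniformly convex caps). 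By Andrews \cite{andrews2000motion} (extending Tso \cite{Tso85} and Chow \cite{chow1985deforming} to general anisotropy $f$), each $\Sigma_0^k$ evolves by \eqref{flow0} to a unique smooth, strictly convex solution $\Sigma_t^k$ on a maximal interval $[0,T_k)$ which shrinks to a point as $t\uparrow T_k$. Since each $\Sigma_t^k$ moves inward, it stays inside the initial convex body, hence inside a fixed compact set on any fixed time interval, locally uniformly in $k$; together with convexity this gives, on any Euclidean ball, a local convex graph representation $x_{n+1}=g_k(x,t)$ with $g_k$ and $|Dg_k|$ bounded locally uniformly in $k$. In these coordinates \eqref{flow0} reads $\partial_t g_k = v_k\,f(\nu_k)\,K_k^\alpha$ with $v_k=\sqrt{1+|Dg_k|^2}$ and $K_k=\det(D^2 g_k)/v_k^{\,n+2}$, so up to the factors $v_k$ and $f(\nu_k)$ (bounded above and below by the $C^1$ estimate) the right-hand side is $(\det D^2 g_k)^\alpha$, a \emph{concave} increasing function of $D^2 g_k$.

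The analytic core is a two-sided bound on the flow speed $\mathcal F_k:=f(\nu)K^\alpha$ of $\Sigma_t^k$, by positive constants on $\Omega\times[\delta,T]$, uniformly in $k$. For the upper bound I would run a Tso-type maximum-principle argument on a localized quantity such as
\[
 \mathcal Q_k \;=\; \frac{\eta\,\mathcal F_k}{\langle F-z_0,\nu\rangle-c}\,,
\]
with $\eta$ a spatial cut-off suitably transported along the flow; the differential inequality that makes this succeed is precisely the property of the base point $z_0$ supplied by Proposition \ref{shift point}, which controls the extra anisotropic reaction terms generated by the $f$-derivatives. For the lower bound on $K$ --- equivalently, local non-degeneracy of strict convexity --- I would combine the preservation of convexity under \eqref{flow0} with a localized maximum-principle argument for the smallest principal curvature of $\Sigma_t^k$, started from the local uniform convexity of $\Sigma_0$, absorbing the $f$-dependent terms in the evolution of the second fundamental form using the $C^1$ bounds and the upper speed bound already obtained. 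Once $\mathcal F_k$ is pinched between two positive constants, the graph equation is uniformly parabolic and in particular $D^2 g_k$ is bounded above and below.

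With uniform parabolicity and the concavity of the operator, Krylov--Safonov and Evans--Krylov estimates followed by Schauder theory then yield locally uniform $C^\infty$ bounds for $g_k$ on $\Omega\times[\delta,T]$, independent of $k$. A diagonal argument extracts a subsequence with $\Sigma_t^k\to\Sigma_t$ in $C^\infty_{\mathrm{loc}}$ for $t\in(0,\infty)$, and $\Sigma_t$ is a smooth, strictly convex solution of \eqref{flow}. Completeness and non-compactness persist, since on long time intervals $\Sigma_t$ contains pieces that are $C^\infty$-close to a fixed non-compact portion of $\Sigma_0$; and the initial condition $\Sigma_t\to\Sigma_0$ as $t\to 0$ --- interpreted in the locally uniform / Hausdorff sense when $\Sigma_0$ is merely $C^2$, as in \cite{choi2019evolution} --- follows from the $C^0$ containment (the flow moves inward) together with convexity and a barrier from below.

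The step I expect to be the main obstacle is the lower bound for $K$ in the third paragraph: in the non-compact setting one cannot minimize a global quantity, and the anisotropic evolution equation of the second fundamental form contains $f$-derivative terms of indefinite sign, so the localization must be arranged so that these terms, together with the unavoidable cut-off errors, are dominated by the already-established $C^1$ and upper speed bounds. The role of Proposition \ref{shift point} is exactly to make both this lower estimate and the Tso-type upper estimate close.
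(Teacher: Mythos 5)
Your overall architecture (approximate by closed solutions of Andrews \cite{andrews2000motion}, prove local a priori estimates, pass to a $C^\infty_{\mathrm{loc}}$ limit) is the same as the paper's, but two steps as written would fail. First, your regularity step rests on the claim that the graphical operator is a concave increasing function of $D^2g_k$: this is false for $\alpha>1/n$, since $(\det D^2u)^{\alpha}$ is concave on positive matrices only for $\alpha\le 1/n$, so Evans--Krylov does not apply directly in the full range $\alpha>0$ of the theorem. This is precisely why the paper invokes Andrews' regularity theorem (Theorem \ref{s6.thm1}, \cite{And04}), which covers operators of the form $\phi(G(D^2u,\dots))$ with $G$ concave and $\phi$ increasing, writing $K^{\alpha}=\bigl(K^{1/n}\bigr)^{n\alpha}$; some such replacement is needed to get the uniform $C^{2,\beta}$ bounds. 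Second, you assert that the limit is defined for all $t\in(0,\infty)$, but the closed approximators shrink to points in finite time, and comparison with shrinking balls only yields an existence time of order $R^{1+n\alpha}$ determined by the inradius of $\Omega$ (Theorem \ref{longtime1}). To get $T=+\infty$ one must show that at every later time the solution is still a complete graph over essentially the same domain $\Omega$, which in the paper is a separate barrier construction (Theorem \ref{longtime2}, following \cite{choi2019evolution}); your ``containment plus convexity'' remark gives neither this nor, by itself, uniform local $C^1$ bounds, since an upper height bound on compact subsets of $\Omega$ already presupposes that the domain of graph-hood does not collapse.

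Beyond these, the analytic core that you yourself flag as the main obstacle is resolved in the paper by a mechanism different from the one you sketch. The localization is done with the height cutoff $\psi_\beta=(N-\beta t-\bar u)_+$, whose evolution produces the term $K^{\alpha}\langle f(\nu)\nu+\bar{\nabla}f(\nu),e_{n+1}\rangle$, and Proposition \ref{shift point} is used exactly to make this term nonpositive after passing to the tangentially modified flow \eqref{flow1}; it is not used inside a Tso quantity. Moreover, the lower bound on $\lambda_{\min}$ (Theorem \ref{principle curvature}) is proved directly for $\psi_\beta^{\,n(1+1/\alpha)}/\lambda_{\min}$, using the Brendle--Choi--Daskalopoulos approximation lemma and the Wulff convexity $f_{11}+f>0$, \emph{without} any prior speed bound; the upper speed bound (Theorem \ref{speed estimate}) then uses both the gradient bound (through $\theta$) and the curvature lower bound (through $\Lambda$), with an Ecker--Huisken/Caffarelli--Nirenberg--Spruck weight $\varphi(v^2)=v^2/(2\theta-v^2)$ rather than a Tso-type denominator $\langle F-z_0,\nu\rangle-c$. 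Your proposed dependency (speed bound first, curvature lower bound deduced from it, via a ``suitably transported'' cutoff whose construction is not given) reverses the order in which the estimates actually close, and that unresolved localization is the gap your outline leaves open.
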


For the proof, we assume that the initial hypersurface $\Sigma_0$ is expressed as a complete convex graph over a domain $\Omega\subset \mathbb{R}^n$, that is, $\Sigma_0=\{(x,u_0(x)):~x\in\Omega\}$ for some function $u_0:\Omega\to \mathbb{R}$, where $\Omega$ may be bounded or unbounded. This is ensured by the theorem of H. Wu \cite{wu1974spherical}:
 \begin{theorem}[\cite{wu1974spherical}]\label{cpt sup}
   	Let $\Sigma$ be a complete, non-compact, locally uniformly convex hypersurface embedded in $\mathbb{R}^{n+1}$. Then there exists a function $u_0:\Omega\to\mathbb{R}$ defined on a convex open domain $\Omega\subset\mathbb{R}^n$ such that $\Sigma=\{(x,u_0(x)):~x\in\Omega\}$ and
    \begin{itemize}
    	\item [(i)] $u_0$ attains its minimum in $\Omega$ and $\inf_{\Omega}u_0>0$;
    	\item [(ii)] if $\Omega\neq\mathbb{R}^{n}$, then $\lim_{x\to x_0}u_0(x)=+\infty$ for all $x_0\in\partial\Omega$;
    	\item [(iii)] if $\Omega$ is unbounded, then $\lim_{R\to+\infty}(\inf_{\Omega\setminus B_R(0)}u_0)=+\infty$.
    \end{itemize}  	
   \end{theorem}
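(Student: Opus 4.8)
The plan is to read off the graph representation directly from the convex geometry of the convex hull $\widehat K:=\overline{\operatorname{conv}(\Sigma)}$. Since $\Sigma$ is locally uniformly convex it is in particular strictly convex, and a complete, connected, embedded, locally convex hypersurface which has a point of positive curvature is, by the classical structure theory of convex hypersurfaces, the boundary of a closed convex body; thus $\Sigma=\partial\widehat K$ and $\widehat K$ has non-empty interior. Because $\Sigma$ is non-compact, $\widehat K$ is unbounded; and $\widehat K$ contains no straight line, since a line in $\widehat K$ would force $\partial\widehat K=\Sigma$ to contain a line, contradicting strict convexity. Consequently the recession cone $C:=\operatorname{rec}(\widehat K)$ is a closed convex cone which is non-trivial ($\widehat K$ unbounded) and pointed, i.e.\ line-free ($\widehat K$ line-free).

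The key step is the choice of the ``vertical'' direction, and for this I would first establish the elementary cone lemma: \emph{a non-trivial pointed closed convex cone $C\subset\mathbb R^{n+1}$ satisfies $C\cap\operatorname{int}(C^{*})\neq\varnothing$}, where $C^{*}=\{e:\langle e,y\rangle\ge0\ \forall y\in C\}$ is the dual cone. Since $C$ is pointed, $C^{*}$ is full-dimensional, so we may pick $e_{0}\in\operatorname{int}(C^{*})$; let $e$ be the point of the closed convex set $C$ nearest to $e_{0}$. Using that $C$ is a cone (test the projection inequality against $0$ and $2e$) one gets $\langle e_{0}-e,y\rangle\le0$ for all $y\in C$, hence $e-e_{0}\in C^{*}$, so $e=e_{0}+(e-e_{0})\in C^{*}$; moreover $e\neq0$ (if $e=0$ then $-e_{0}\in C^{*}$, which together with $e_{0}\in\operatorname{int}(C^{*})$ and $C\neq\{0\}$ is impossible), and for any $y\in C\setminus\{0\}$ one has $\langle e,y\rangle>0$ — otherwise $\langle e,y\rangle=0$, and since then $\langle e_{0}-e,y\rangle=\langle e_{0},y\rangle>0$, the point $e+\varepsilon y\in C$ would be strictly closer to $e_{0}$ for small $\varepsilon>0$. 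Thus $e\in C\cap\operatorname{int}(C^{*})$. Normalise $e$ to a unit vector $e_{n+1}$, split $\mathbb R^{n+1}=\mathbb R^{n}\times\mathbb R\,e_{n+1}$ with coordinate $x_{n+1}=\langle\,\cdot\,,e_{n+1}\rangle$, and let $\pi:\mathbb R^{n+1}\to\mathbb R^{n}$ be the projection killing the $e_{n+1}$-component.

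Two consequences of this choice then give the graph representation and property (i). Since $e_{n+1}\in\operatorname{int}(C^{*})$, every sublevel set $\{p\in\widehat K:\ x_{n+1}(p)\le c\}$ has recession cone $\{v\in C:\langle v,e_{n+1}\rangle\le0\}=\{0\}$ and is therefore compact, so $x_{n+1}$ attains a finite minimum $m$ on $\widehat K$; placing the origin so that $m>0$ (legitimate, as we are free to choose the coordinate system in $\mathbb R^{n+1}$) we may assume $m>0$. Since $e_{n+1}\in C$ and $\widehat K$ is line-free, for each $x\in\Omega:=\pi(\widehat K)$ the vertical fibre $(\{x\}\times\mathbb R\,e_{n+1})\cap\widehat K$ is a closed upward ray, which we write as $\{x\}\times[u_{0}(x),\infty)$; the resulting function $u_{0}:\Omega\to\mathbb R$ is convex, satisfies $u_{0}\ge m>0$, and $\widehat K=\{(x,s):x\in\Omega,\ s\ge u_{0}(x)\}$, whence $\Sigma=\partial\widehat K=\{(x,u_{0}(x)):x\in\Omega\}$. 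Finally $\Omega$ is open: a supporting hyperplane of the convex set $\Omega$ at a point $x_{0}\in\Omega\cap\partial\Omega$ would lift to a vertical supporting hyperplane of $\widehat K$ containing the entire fibre over $x_{0}$, forcing $\Sigma$ to contain a ray, contradicting strict convexity. The minimum of $u_{0}$ is attained at the $\pi$-image of a point realising $m$, giving (i).

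Properties (ii) and (iii) follow quickly, again from $e_{n+1}\in\operatorname{int}(C^{*})$. For (ii): if $\Omega\neq\mathbb R^{n}$ and $x_{0}\in\partial\Omega$ but $u_{0}$ did not tend to $+\infty$ at $x_{0}$, then along some $\Omega\ni x_{k}\to x_{0}$ with $u_{0}(x_{k})$ bounded (and bounded below by $m$) we would get $(x_{k},u_{0}(x_{k}))\to(x_{0},L)\in\Sigma\subset\widehat K$, hence $x_{0}\in\Omega$, contradicting openness of $\Omega$. For (iii): if $\Omega$ is unbounded while $\inf_{\Omega\setminus B_{R}(0)}u_{0}$ stays bounded, pick $x_{k}\in\Omega$ with $|x_{k}|\to\infty$ and $u_{0}(x_{k})\le c$; then the points $p_{k}=(x_{k},u_{0}(x_{k}))\in\widehat K$ satisfy $|p_{k}|\to\infty$ and, along a subsequence, $p_{k}/|p_{k}|\to(\bar v,0)$ with $|\bar v|=1$, so $(\bar v,0)\in C\setminus\{0\}$, yet $\langle(\bar v,0),e_{n+1}\rangle=0$ contradicts $e_{n+1}\in\operatorname{int}(C^{*})$. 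The main obstacle is precisely the construction of $e_{n+1}$: it must lie in $C$ so that $\Sigma$ projects injectively onto $\Omega$ rather than ``doubly'' covering part of $\mathbb R^{n}$, and simultaneously in the interior of $C^{*}$ so that the graph is proper in the sense of (i)--(iii); the cone lemma above is exactly what makes these two requirements compatible, and together with the classical fact $\Sigma=\partial\widehat K$ it is the only non-formal input to the argument.
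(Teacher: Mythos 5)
Your proposal is correct, but there is nothing in the paper to compare it with: Theorem \ref{cpt sup} is quoted from Wu \cite{wu1974spherical} and the paper gives no proof of it. Your argument is a clean, essentially self-contained convex-analysis derivation: reduce to $\widehat K=\overline{\operatorname{conv}}(\Sigma)$, note $\widehat K$ is unbounded and line-free, prove the cone lemma $C\cap\operatorname{int}(C^{*})\neq\varnothing$ for the recession cone $C$ by nearest-point projection, and then read off the graph structure and (i)--(iii) from $e_{n+1}\in C\cap\operatorname{int}(C^{*})$ (compactness of sublevel sets since $C\cap\{\langle\cdot,e_{n+1}\rangle\le 0\}=\{0\}$; openness of $\Omega$ because a vertical supporting hyperplane would force a straight ray inside $\Sigma$; (ii) from closedness of $\widehat K$ together with openness of $\Omega$; (iii) because any asymptotic direction of $\widehat K$ lies in $C$ and hence has strictly positive $e_{n+1}$-component). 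I checked these steps and they are sound; in the cone lemma you in fact get $e\in\operatorname{int}(C^{*})$ immediately from $e=e_0+(e-e_0)$ with $e_0\in\operatorname{int}(C^{*})$ and $e-e_0\in C^{*}$, so the variational strict-positivity argument is a confirmation rather than a necessity. Two points deserve explicit mention: the step $\Sigma=\partial\widehat K$ is exactly the van Heijenoort--Sacksteder--Wu structure theorem (complete, connected, embedded, locally convex with a strictly convex point, $n\ge 2$), which is the genuinely non-formal input and carries the same hypotheses as the cited result, and one should add a line identifying the convex body it produces with $\overline{\operatorname{conv}}(\Sigma)$ (immediate here since a strictly convex $\Sigma$ rules out half-space-like behavior); and property (i) holds only after the rigid motion choosing $e_{n+1}$ and the origin, which is the intended reading of the theorem (the paper itself chooses $e_{n+1}$ adapted to $\Sigma$) but should be stated. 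With those caveats, your route buys a transparent proof of the refinements (i)--(iii) from elementary recession-cone arguments, whereas the paper simply outsources the whole statement to \cite{wu1974spherical}.
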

\noindent Then we will work on the graphical solution of the flow. As in the work \cite{choi2019evolution} on the isotropic case, there are three ingredients in the proof of Theorem \ref{main thm}:  a local bound from above on the gradient function of $\Sigma_t$; a local bound from below on the smallest principle curvature $\lambda_{\min}$ of $\Sigma_t$; and a local bound from above on the speed $f(\nu)K^{\alpha}$ of $\Sigma_t$. We shall first assume the initial data is smooth and strictly convex in order to apply the known parabolic PDE theory, and the above local a priori estimates are proved for smooth solutions. An approximation argument implies a smooth solution for locally uniformly convex hypersurfaces.

   In our anisotropic case, a crucial new observation is that  by choosing a suitable point $z_0\in\widehat{\mathcal{W}}$ (see Proposition \ref{shift point}) the support function $f$ of the Wulff shape $\mathcal{W}$ with respect to $z_0$ satisfies the property  $\langle f(x)x+\bar{\nabla} f(x),e_{n+1}\rangle\leq 0$ whenever $\langle x,e_{n+1}\rangle\leq 0$, where $e_{n+1}$ is a fixed direction. This is important for us to obtain the local a priori estimates using the maximum principle.  By Theorem \ref{cpt sup}, a locally uniformly convex hypersurface $\Sigma$ can be written as a graph over a convex domain $\Omega\subset\mathbb{R}^n$. Let $e_{n+1}$ be the direction such that
\begin{equation}%\label{s2.2-1}
  \langle\nu(p), e_{n+1}\rangle\leq 0,\quad \forall p\in \Sigma,
\end{equation}
  where $\nu(p)$ is the unit normal vector of $\Sigma$ at the point $p$ pointing outward of the convex hull of $\Sigma$. Then Proposition \ref{shift point} guarantees that the anisotropic unit normal vector field $\nu_f:=f(\mathbf{\nu})\mathbf{\nu}+\bar{\nabla} f(\mathbf{\nu})$  of $\Sigma$ satisfies
   \begin{equation}%\label{s2.vf}
     \langle \nu_f(p),e_{n+1}\rangle \leq 0,\quad \forall p\in \Sigma
   \end{equation}
as well.  Based on this, we modify the flow \eqref{flow} by changing the direction of the evolution from the unit normal $\nu$ to the anisotropic unit normal $\nu_f$:
 \begin{equation}\label{s1.flow2}
\frac{\partial}{\partial t}F(p,t)=-K^{\alpha}(p,t)\nu_f(p,t),\qquad \alpha>0,
 \end{equation}
This modified flow \eqref{s1.flow2} is equivalent to the original flow \eqref{flow} as their speeds only differ by a tangent vector field $\bar{\nabla}f(\nu)$ and the corresponding solutions are the same up to a time-dependent tangential diffeomorphism. The advantage of this modification is that we can cancel some bad terms in the evolution equations in order to apply the maximum principle.

  % The estimate of $\lambda_{\min}$ follows via a complicate computation involving an appropriate cut-off function, the key is to cancel out the derivative terms. The estimate of $fK^{\alpha}$ follows the well-known technique by Caffarelli, Nirenberg and Spruck \cite{caffarelli1988nonlinear} which was also used by Ecker and Huisken \cite{ecker1991interior} in the study of the mean curvature flow, K. Choi and P. Daskalopoulos \cite{choi2016q_k}  in the study of the $\mathcal{Q}_k$ flow and K. Choi, P. Daskalopoulos, L. Kim, and K. Lee \cite{choi2019evolution} in the study of the $\alpha$-Gauss curvature flow.

The paper is organized as follows: In \S \ref{sec1}, we collect and prove some properties on the Wulff shape, and the anisotropic $\alpha$-Gauss curvature flow \eqref{s1.flow2}. In \S \ref{sec.evl} we derive the basic evolution equations along the flow \eqref{s1.flow2}. In  \S \ref{sec2} - \S \ref{sec4} we prove the crucial local a priori estimates on gradient function, principal curvatures and the speed function. In \S \ref{sec5} we complete the proof of Theorem \ref{main thm}.

\begin{ack}
The authors would like to thank the referees for helpful suggestions.  The research was surpported by National Key R and D Program of China 2021YFA1001800 and 2020YFA0713100, China Postdoctoral Science Foundation No.2022M723057. S. Pan would like to thank Professor Jiayu Li for his constant support.
\end{ack}

\section{The Wulff shape and the modified flow}\label{sec1}

In this section, we first review some properties of the Wulff shape and prove the existence of an approximate inner point inside the Wulff shape $\mathcal{W}$ such that the property in Proposition \ref{shift point} holds. Then we consider the anisotropy $f$ as the support function of the Wulff shape $\mathcal{W}$ with respect to this point and modify the flow \eqref{flow} to the equivalent flow \eqref{s1.flow2} by adding a tangential term.
\subsection{The Wulff shape}
Let $\mathcal{W}\subset\mathbb{R}^{n+1}$ be a smooth closed and uniformly convex Wulff shape and $\widehat{\mathcal{W}}$ be the convex body bounded by $\mathcal{W}$. The Gauss map of $\mathcal{W}$ is defined as the unit outward normal $G=\nu:\mathcal{W}\to \mathbb{S}^n$ which is a non-degenerate diffeomorphism. Then $\mathcal{W}$ can be parametrized by the inverse Gauss map $G^{-1}: \mathbb{S}^n\to\mathcal{W}$. Fix any point $z\in\widehat{\mathcal{W}}$, the support function $f_z:\mathbb{S}^n\to\mathbb{R}_+$ of $\mathcal{W}$ with respect to $z$
is defined by
$$f_z(x)=\sup\{\langle x,y-z\rangle:y\in\mathcal{W}\}.$$
Since $\mathcal{W}$ is uniformly convex, the supremum is attained at a point $y$ such that $x$ is the outer normal of $\mathcal{W}$ at $y$, and
\[y-z=f_z(x)x+\bar{\nabla} f_z(x),\]
where $\bar{\nabla}$ is the covariant derivative with respect to the round metric $g_{\mathbb{S}^n}=(\bar{g}_{ij})$ of $\mathbb{S}^n$. This gives the embedding $G^{-1}: \mathbb{S}^n\to \mathbb{R}^{n+1}$ with image equals $\mathcal{W}$. The Weingarten map of $\mathcal{W}$ can also be recovered by the support function (see e.g. \cite{andrews2000motion,urbas1991expansion}). Let
\begin{equation}\label{wulff eq-1}
	\mathfrak{r}_{ij}=\bar{\nabla}^2_{ij}f_z+f_z\bar{g}_{ij},
\end{equation}
where   $\bar{\nabla}^2_{ij}f$ denotes the Hessian of $f$ with respect to the $\bar{g}_{ij}$. The eigenvalues of $\mathfrak{r}_{ij}$ with respect to $\bar{g}_{ij}$ are the principal radii of curvature $\mathfrak{r}_i=1/{\lambda_i}, i=1,\cdots,n$. As $\mathcal{W}$ is smooth, closed and uniformly convex, the matrix $\mathfrak{r}_{ij}$  is positive definite at any point and there are uniformly positive lower and upper bounds of the eigenvalues.

Suppose $e_{n+1}$ is a fixed direction in $\mathbb{R}^{n+1}$.  We next show that there exists a point $z_0$ contained in  $\widehat{\mathcal{W}}$, such that the support function $f_{z_0}$ of $\mathcal{W}$ with respect to $z_0$ satisfies $\langle {f}_{z_0}(x)x+\bar{\nabla} {f}_{z_0}(x),e_{n+1}\rangle\leq 0$ whenever $\langle x,e_{n+1}\rangle\leq 0$ for $x\in \mathbb{S}^n$. First we have the following simple observation.
   \begin{lemma}\label{convex hypersurface}
  	Suppose that $\mathcal{W}\subset\mathbb{R}^{n+1}$ is a smooth closed and uniformly convex hypersurface and $X$ denotes its position vector field.
  	If $\langle X, e_{n+1}\rangle$ attains its maximum at some point $X_0\in \mathcal{W}$, then the outer normal vector at
  	$X_0$ is $e_{n+1}$.
  \end{lemma}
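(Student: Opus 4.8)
The plan is to use the first-order condition at an interior maximum together with the convexity of $\mathcal{W}$ to pin down the normal direction. Since $\mathcal{W}$ is closed (hence compact), the smooth function $h:\mathcal{W}\to\mathbb{R}$, $h(X)=\langle X,e_{n+1}\rangle$, attains its maximum at some $X_0$. Writing $\nabla$ for the gradient on $\mathcal{W}$ with respect to the induced metric, one has $\nabla h = (e_{n+1})^\top$, the tangential component of the constant vector $e_{n+1}$; this is the standard fact that the gradient of a linear height function on a submanifold is the tangential projection of the corresponding ambient vector. At the maximum point $\nabla h(X_0)=0$, so $(e_{n+1})^\top=0$ at $X_0$, i.e.\ $e_{n+1}$ is orthogonal to $T_{X_0}\mathcal{W}$. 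Since $e_{n+1}$ is a unit vector and $\nu(X_0)$ spans the one-dimensional normal space, we conclude $e_{n+1}=\pm\nu(X_0)$.

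The only remaining point is to exclude the sign $e_{n+1}=-\nu(X_0)$, and here I would invoke the convexity of $\mathcal{W}$. The cleanest argument is via the supporting hyperplane: because $\widehat{\mathcal{W}}$ is convex and $h$ is maximized over $\mathcal{W}$ (equivalently over $\widehat{\mathcal{W}}$) at $X_0$, the affine hyperplane $\{y:\langle y,e_{n+1}\rangle=\langle X_0,e_{n+1}\rangle\}$ supports $\widehat{\mathcal{W}}$ at $X_0$, with $\widehat{\mathcal{W}}$ contained in the half-space $\{y:\langle y,e_{n+1}\rangle\le\langle X_0,e_{n+1}\rangle\}$. Hence the outward normal of $\mathcal{W}$ at $X_0$, which by definition points away from $\widehat{\mathcal{W}}$, must be the vector $e_{n+1}$ rather than $-e_{n+1}$, giving $\nu(X_0)=e_{n+1}$ as claimed. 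Alternatively, one can argue by the second-order condition: the Hessian of $h$ at $X_0$ equals $-\langle\nu(X_0),e_{n+1}\rangle$ times the second fundamental form $A_{X_0}$; at a maximum this Hessian is negative semidefinite, and since uniform convexity gives $A_{X_0}>0$, the scalar $\langle\nu(X_0),e_{n+1}\rangle$ must be nonnegative, forcing the $+$ sign.

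There is essentially no obstacle in this lemma — it is the elementary geometric observation that the outward normal at the ``top'' point of a convex body is the upward direction. The only place one must be slightly careful is in choosing the correct sign of the normal, which is exactly where the convexity hypothesis (as opposed to mere smoothness and closedness) enters; I would make that dependence explicit in the write-up so the reader sees why uniform convexity, used elsewhere in Section~\ref{sec1}, is not actually needed for this particular statement — plain convexity of $\widehat{\mathcal{W}}$ suffices.
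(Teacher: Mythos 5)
Your proposal is correct, and in fact contains the paper's own argument as your stated alternative: the paper fixes the sign exactly via the second-order condition, computing $\nabla^2\langle X,e_{n+1}\rangle = -h_{ij}\langle \nu,e_{n+1}\rangle \le 0$ at the maximum from the Gauss formula and then using $h_{ij}>0$ (uniform convexity) to conclude $\langle\nu,e_{n+1}\rangle\ge 0$, hence $\nu=e_{n+1}$. Your primary route is different and more elementary: after the first-order condition gives $e_{n+1}\perp T_{X_0}\mathcal{W}$, you use only that $\widehat{\mathcal{W}}$ lies in the half-space $\{\langle y,e_{n+1}\rangle\le\langle X_0,e_{n+1}\rangle\}$, so the supporting-half-space characterization of the outer normal (together with $\widehat{\mathcal{W}}$ having nonempty interior, which rules out $-e_{n+1}$) forces $\nu(X_0)=e_{n+1}$. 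This buys you slightly more: as you note, plain convexity of $\widehat{\mathcal{W}}$ suffices, and your argument is insensitive to degeneracies such as a vanishing second fundamental form at the top point, where the Hessian argument alone would give no sign information; the paper's route, by contrast, stays entirely within the maximum-principle/Gauss-formula formalism it reuses throughout the evolution-equation sections and needs the positivity of $h_{ij}$. Either write-up is acceptable; if you keep the supporting-hyperplane version, just make explicit that the maximum of the height over $\mathcal{W}$ equals the maximum over $\widehat{\mathcal{W}}$ and that the outer normal of the smooth convex hypersurface is characterized by $\langle y-X_0,\nu(X_0)\rangle\le 0$ for all $y\in\widehat{\mathcal{W}}$.
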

  \begin{proof}
  	Assume that $\{e_i, i=1,\cdots,n\}$ are orthonormal frame of the tangent space of $\mathcal{W}$, $\nu$ is the outer normal vector field of $\mathcal{W}$.
  	By the maximum principle, at $X_0$, we have
  	\begin{equation}\label{eq-2}
  		\nabla_{e_i}\langle X, e_{n+1}\rangle=\langle e_i, e_{n+1}\rangle=0,\ \forall i=1,\cdots,n,
  	\end{equation}
  	and
  	\begin{eqnarray}\label{eq-3}
  		\nabla^2\langle X, e_{n+1}\rangle(X_0)\leq 0.
  	\end{eqnarray}
  	Here $\nabla$ denotes the Levi-Civita connection with respect to the induced metric on $\mathcal{W}$. Equation \eqref{eq-2} implies that $e_{n+1}$ is a normal vector field of $\mathcal{W}$ at $X_0$. On the other hand, by the Gauss formula $\nabla_i\nabla_jX=-h_{ij}\nu$,
  	and $\mathcal{W}$ is uniformly convex, \eqref{eq-3} leads to $\langle \nu, e_{n+1}\rangle\geq 0$ at $X_0$. Therefore,
  	$e_{n+1}=\nu$ at $X_0$.
  \end{proof}
  \begin{proposition}\label{shift point}
  	Let $\mathcal{W}\subset\mathbb{R}^{n+1}$ be a smooth closed and uniformly convex Wulff shape and $\widehat{\mathcal{W}}$ be the convex body bounded by $\mathcal{W}$. Then there exists an interior point $z_0\in\widehat{\mathcal{W}}$, such that the support function $f_{z_0}$ of $\mathcal{W}$ with respect to the point $z_0$ satisfies
  \begin{equation*}
    \langle f_{z_0}(x)x+\bar{\nabla }f_{z_0}(x),e_{n+1}\rangle\leq 0
  \end{equation*}
  whenever $\langle x,e_{n+1}\rangle\leq 0$, where $x\in \mathbb{S}^n$.
  \end{proposition}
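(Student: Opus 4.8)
The plan is to translate the analytic inequality into an elementary statement in convex geometry and then exhibit $z_0$ explicitly. Recall from the discussion preceding the statement that for $z\in\widehat{\mathcal{W}}$ and $x\in\mathbb{S}^n$ one has $f_z(x)x+\bar{\nabla}f_z(x)=G^{-1}(x)-z$, where $G^{-1}(x)\in\mathcal{W}$ is the (unique) point of $\mathcal{W}$ whose outward unit normal is $x$. Hence, for a given $x$ with $\langle x,e_{n+1}\rangle\le 0$, the required inequality $\langle f_{z_0}(x)x+\bar{\nabla}f_{z_0}(x),e_{n+1}\rangle\le 0$ is equivalent to $\langle G^{-1}(x),e_{n+1}\rangle\le\langle z_0,e_{n+1}\rangle$. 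Therefore it suffices to find an interior point $z_0\in\widehat{\mathcal{W}}$ whose $e_{n+1}$-coordinate is at least the supremum of $\langle y,e_{n+1}\rangle$ over all $y$ lying in the closed ``downward-normal'' part $S:=\{y\in\mathcal{W}:\langle\nu(y),e_{n+1}\rangle\le 0\}$ of the Wulff shape.

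Set $M^-:=\max_{y\in S}\langle y,e_{n+1}\rangle$ and $M^+:=\max_{y\in\mathcal{W}}\langle y,e_{n+1}\rangle$. The set $S$ is a nonempty closed, hence compact, subset of $\mathcal{W}$ (it contains the point with outward normal $-e_{n+1}$), so $M^-$ is attained. The key step is the strict gap $M^-<M^+$. By Lemma \ref{convex hypersurface}, any point of $\mathcal{W}$ at which $\langle\cdot,e_{n+1}\rangle$ attains the maximum $M^+$ has outward normal $e_{n+1}$; since $\mathcal{W}$ is uniformly convex its Gauss map is a diffeomorphism, so this maximizer is the \emph{unique} point $y^\ast:=G^{-1}(e_{n+1})$. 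Because $\langle e_{n+1},e_{n+1}\rangle=1>0$, we have $y^\ast\notin S$; thus every point of $S$ has $e_{n+1}$-coordinate strictly less than $M^+$, and compactness of $S$ then gives $M^-<M^+$.

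Finally I construct $z_0$. Since $\mathcal{W}$ is smooth and uniformly convex and its outward normal at $y^\ast$ is $e_{n+1}$, the point $y^\ast-\varepsilon e_{n+1}$ lies in the interior of $\widehat{\mathcal{W}}$ for all sufficiently small $\varepsilon>0$. Fixing such an $\varepsilon$ that moreover satisfies $\varepsilon<M^+-M^-$, put $z_0:=y^\ast-\varepsilon e_{n+1}\in\operatorname{int}\widehat{\mathcal{W}}$. Then $\langle z_0,e_{n+1}\rangle=M^+-\varepsilon>M^-\ge\langle y,e_{n+1}\rangle$ for every $y\in S$, which by the reformulation in the first paragraph is exactly the desired property of $f_{z_0}$.

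The only non-formal point is the strict separation $M^-<M^+$; it relies on Lemma \ref{convex hypersurface} together with uniqueness of the ``top point'' $y^\ast$, which is where uniform (strict) convexity of $\mathcal{W}$ enters. Everything else — compactness of $S$, and the fact that moving inward from a smooth boundary point along the inner normal enters the interior — is routine. (An alternative to the explicit $z_0=y^\ast-\varepsilon e_{n+1}$ would be an intermediate-value argument along the segment joining an arbitrary interior point of $\widehat{\mathcal{W}}$ to $y^\ast$, but the explicit choice is cleaner.)
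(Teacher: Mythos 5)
Your proposal is correct and follows essentially the same route as the paper: both reduce the inequality to comparing $e_{n+1}$-heights of points of $\mathcal{W}$ via $f_{z}(x)x+\bar{\nabla}f_{z}(x)=G^{-1}(x)-z$, both use Lemma \ref{convex hypersurface} to identify the top point $y^{*}=G^{-1}(e_{n+1})$, and both choose $z_0$ accordingly. The only difference is the final construction: the paper takes $z_0$ on the open segment joining $y^{*}$ to the highest point of the lower portion $\varGamma^-$ (interiority from uniform convexity, no strict gap needed), whereas you push $y^{*}$ inward along $-e_{n+1}$, which requires your strict-gap argument $M^-<M^+$ and the routine fact that the inner normal at a smooth boundary point enters the interior---both of which you justify correctly.
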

  \begin{proof}
  	 For fixed direction $e_{n+1}$, there
  	is a great circle $\mathbb{S}^1$ on $\mathbb{S}^n$ whose position vector is  perpendicular to $e_{n+1}$. Then,  $\mathbb{S}^1$ divides  $\mathbb{S}^n$ into two parts
  	denoted by
  	$$\mathbb{S}^+:=\{x\in \mathbb{S}^n:\langle x, e_{n+1}\rangle\geq 0\},$$
  	$$\mathbb{S}^-:=\{x\in \mathbb{S}^n:\langle x, e_{n+1}\rangle\leq 0\}.$$
  	Since the Gauss map $G$ is a non-degenerate diffeomorphism between $\mathcal{W}$ and $\mathbb{S}^n$, the inverse of $G$ maps $\mathbb{S}^1$ to a closed curve $\varGamma$ on $\mathcal{W}$.

  Let $f$ be the support function of $\mathcal{W}$ with respect to the original point $o\in \mathbb{R}^{n+1}$. Then $x\in \mathbb{S}^n$ is the outer normal vector at the point $y=f(x)x+\bar{\nabla} f(x)$ on $\mathcal{W}$. Therefore, on $\varGamma$ the outer
  	normal vector $x$ is perpendicular to $e_{n+1}$. Meanwhile, $\varGamma$ divides
  	$\mathcal{W}$ into two parts denoted by
  	$$\varGamma^+:=\{y=f(x)x+\bar{\nabla} f(x)\in\mathcal{W}:\langle x,e_{n+1}\rangle\geq 0\},$$
  	$$\varGamma^-:=\{y=f(x)x+\bar{\nabla} f(x)\in\mathcal{W}:\langle x,e_{n+1}\rangle\leq 0\}.$$
  	On the one hand, we assume that
  	$$\max_{x\in \mathbb{S}^-}\langle f(x)x+\bar{\nabla} f(x),e_{n+1}\rangle=\langle f(x_0)x_0+\nabla f(x_0),e_{n+1}\rangle=\langle y_0,e_{n+1}\rangle$$
  	where $x_0\in \mathbb{S}^-$, $y_0=f(x_0)x_0+\bar{\nabla} f(x_0)\in\varGamma^-$. On the other hand, we assume that
  	$$\max_{x\in \mathbb{S}^n}\langle f(x)x+\bar{\nabla} f(x),e_{n+1}\rangle=\langle f(x_1)x_1+\bar{\nabla} f(x_1), e_{n+1}\rangle=\langle y_1,e_{n+1}\rangle,$$
  	where $x_1\in \mathbb{S}^n$, $y_1=f(x_1)x_1+\bar{\nabla} f(x_1)\in \mathcal{W}$.
  	By Lemma \ref{convex hypersurface}, we know that $x_1=e_{n+1}$, that means $x_1\in \mathbb{S}^+\setminus \mathbb{S}^1$ and
  	$y_1\in\varGamma^+\setminus\varGamma$.
  	
  	Since $\mathcal{W}$ is uniformly convex, the point $ty_1+(1-t)y_0$ is
  	contained in $\widehat{\mathcal{W}}$ for any $t\in(0,1)$. Moreover,
  	\begin{eqnarray*}
  		\langle y_0-(ty_1+(1-t)y_0), e_{n+1}\rangle&=&t\langle y_0-y_1,e_{n+1}\rangle\leq 0.
  	\end{eqnarray*}
  	Therefore, for any fixed $t_0\in(0,1)$, let $z_0=t_0y_1+(1-t_0)y_0\in\widehat{\mathcal{W}}$, then
  	\begin{eqnarray}\label{s2.1}
  		\langle y-z_0, e_{n+1}\rangle\leq \langle y_0-z_0, e_{n+1}\rangle\leq 0, \ \forall y\in\varGamma^-.
  	\end{eqnarray}

  Fix the point $z_0\in\widehat{\mathcal{W}}$, we note that the support function $f_{z_0}(x)$ of the Wulff shape
   $\mathcal{W}$ with respect to $z_0$ and the support function $f(x)$ of the Wulff shape
   $\mathcal{W}$ with respect to the origin $o\in \mathbb{R}^{n+1}$ are related by  $${f}_{z_0}(x)=f(x)-\langle z_0,x\rangle.$$
   Then \eqref{s2.1} implies that
   \begin{equation*}
     \langle f_{z_0}(x)x+\bar{\nabla} f_{z_0}(x),e_{n+1}\rangle\leq 0
   \end{equation*}
   whenever $\langle x,e_{n+1}\rangle\leq 0$ for $x\in \mathbb{S}^n$.  This completes the proof of Proposition \ref{shift point}.
  \end{proof}

%Then Lemma \ref{shift point} implies immediately that
%  \begin{proposition}\label{shift origin}
%   Let $\mathcal{W}\subset\mathbb{R}^{n+1}$ be a smooth closed and uniformly convex Wulff shape and $\widehat{\mathcal{W}}$ be the convex body bounded by $\mathcal{W}$. Then there exists a point $z_0\in\widehat{\mathcal{W}}$, such that the support function $f$ of $\mathcal{W}$ with respect to $z_0$ satisfies
%  $\langle f(x)x+\bar{\nabla} f(x),e_{n+1}\rangle\leq 0$ whenever $\langle x,e_{n+1}\rangle\leq 0$ for $x\in \mathbb{S}^n$.
%  \end{proposition}
From now on, for a given smooth closed and uniformly convex Wulff shape $\mathcal{W}$, we always choose the point $z_0\in \widehat{\mathcal{W}}$ such that the support function $f_{z_0}$ with respect to $z_0$ satisfies the property in Proposition \ref{shift point}. For simplicity of the notations, we denote $f(x)=f_{z_0}(x)$ in the rest of the paper. So we have
  \begin{equation*}
    \langle {f}(x)x+\bar{\nabla} {f}(x),e_{n+1}\rangle\leq 0
  \end{equation*}
  whenever $\langle x,e_{n+1}\rangle\leq 0$ for $x\in \mathbb{S}^n$. We shall consider the anisotropic $\alpha$-Gauss curvature flow  associated with this anisotropy $f$. Note that when $f$ is a constant, we can rescale the constant to be $1$ and the corresponding Wulff shape $\mathcal{W}$ reduces to the unit round sphere $\mathbb{S}^n$. In this case, the flow \eqref{flow} reduces to the isotropic $\alpha$-Gauss curvature flow.

\subsection{The anisotropic Gauss curvature flow}
  Given a complete, non-compact, strict convex hypersurface $\Sigma_0$ embedded in $\mathbb{R}^{n+1}$, we let $F_0: M^n\to\mathbb{R}^{n+1}$  be an immersion with $F_0(M^n)=\Sigma_0$. By Theorem \ref{cpt sup}, $\Sigma_0$ is a graph over a convex domain $\Omega\subset\mathbb{R}^n$. Let $e_{n+1}$ be the direction satisfying
\begin{equation}\label{s2.2-1}
  \langle\nu(p), e_{n+1}\rangle\leq 0,\ \forall p\in M^n,
\end{equation}
  where $\nu(p)$ is the unit normal vector of $\Sigma_0$ at the point $F_0(p)$ pointing outward of the convex hull of $\Sigma_0$. Then by Proposition \ref{shift point}, the anisotropic normal vector field $\nu_f:=f(\mathbf{\nu})\mathbf{\nu}+\bar{\nabla} f(\mathbf{\nu})$  satisfies
   \begin{equation}\label{s2.vf}
     \langle \nu_f(p),e_{n+1}\rangle \leq 0,\ \forall p\in M^n.
   \end{equation}
   Based on this observation, it's natural to modify the flow \eqref{flow} by changing the direction of the evolution from the unit normal $\nu$ to the anisotropic unit normal $\nu_f$. Note that the vector field $\bar{\nabla}f(\nu)$ is a tangent vector on $T_{\nu}\mathbb{S}^n$, which could be viewed as a tangent vector at $T_p\Sigma_t$ since we have a natural identification between $T_p\Sigma_t$ and $T_{\nu(p)}\mathbb{S}^n$. This follows from the fact that the unit normal of $\mathbb{S}^n$ at $\nu(p)\in \mathbb{S}^n$ is just $\nu(p)$ itself, and the orthonormal frame of $T_p\Sigma_t$ also spans the tangent space $T_{\nu(p)}\mathbb{S}^n$ at $\nu(p)$. Let $g_{ij}$ denote the induced metric on $\Sigma_t$.  In local coordinates, we can write the tangent vector $\bar{\nabla}f(\nu)\in T_{\nu(p)} \mathbb{S}^n$ as $\bar{\nabla}f
(\nu)=f_k(\nu)g^{k\ell}\partial_\ell F$.

   Since $\bar{\nabla} f(\nu)$ is a tangent vector field of $\Sigma_t$, the modification of changing the direction $\nu$ to $\nu_f$ does not change the behavior of the flow and it's just a time-dependent tangential diffeomorphism. So in the following, we consider the following equivalent form of the flow \eqref{flow}
  \begin{equation}\label{flow1}
  	\left\{
  	\begin{aligned}
  		&\frac{\partial}{\partial t}F(p,t)=-K^{\alpha}(p,t)\left(f(\mathbf{\nu})\mathbf{\nu}+\bar{\nabla} f(\mathbf{\nu})\right),\\
  		&\lim_{t\to0}F(p,t)=F_0(p,t),
  	\end{aligned}
  	\right.
  \end{equation}
  where $K(p,t)$ is the Gauss curvature of $\Sigma_t$ at $F(p,t)$, $\nu(p,t)$ is the unit normal vector of $\Sigma_t$ at the point
  $F(p,t)$ pointing outward of the convex hull of $\Sigma_t$. On the other hand, by writing $\Sigma_t$ as graph of a convex function $u(\cdot,t)$, we can also work with an equivalent scalar parabolic equation for the function $u$:
\begin{equation}\label{s2.flowu}
  \frac{\partial }{\partial t}u=\rho(Du)\frac{(\mathrm{det}~ D^2u)^{\alpha}}{(1+|Du|^2)^{\frac{\alpha(n+2)-1}{2}}},
\end{equation}
where
\begin{equation*}
 \rho(Du)=f\left(\frac{\sum_{i=1}^ne_iD_iu-e_{n+1}}{\sqrt{1+|Du|^2}}\right).
\end{equation*}

\section{Evolution equations}\label{sec.evl}
In this section, we derive the evolution equations along the modified anisotropic $\alpha$-Gauss curvature flow \eqref{flow1}.

We first fix the notations. For a strictly convex smooth hypersurface $\Sigma$ in $\mathbb{R}^{n+1}$, we denote by $g_{ij}, h_{ij}$, $b^{ij}$ and $g^{ij}$ the induced metric, the second fundamental form, the inverse of the second fundamental form (namely $b^{ij}h_{jk}=\delta^i_k$) and the inverse matrix of $g_{ij}$. The mean curvature of $\Sigma$ is defined as $H=\sum_{i,j=1}^{n}g^{ij}h_{ij}$ and $K=\det(g^{ik}h_{kj})$ denotes the Gauss curvature of $\Sigma$. Let $\nabla$ denote the Levi-Civita connection with respect to the induced metric on $\Sigma$. We denote by $\mathcal{L}$ the linearized operator of the flow \eqref{flow}
      $$\mathcal{L} =\alpha fK^{\alpha}b^{ij}\nabla_i\nabla_j.$$
      Furthermore, $\langle,\rangle_{\mathcal{L} }$ denotes the associated inner product
      $$\langle \nabla g, \nabla h\rangle_{\mathcal{L} }=\alpha fK^{\alpha}b^{ij}\nabla_ig\nabla_jh,$$
      and $\Vert\cdot\Vert_{\mathcal{L} }$ denotes the $\mathcal{L}$-norm given by the inner product $\langle,\rangle_{\mathcal{L}}$. To obtain the $C^1$ estimate, it is convenient to consider the gradient function defined by
      \begin{equation}\label{s2.v}
        \upsilon =-\langle \nu, e_{n+1}\rangle^{-1}=\sqrt{1+|Du|^2}
      \end{equation}
       where $\nu$ is the unit normal vector field pointing outward of the convex hull of $\Sigma$. The upper bound of $\upsilon$ also ensures that the evolving hypersurface $\Sigma_t$ remains as a graph.

 We summarize the evolution equations in Proposition \ref{s2.pro2.4}. The calculation is standard (see e.g. \cite{andrews2020extrinsic}) but we need to carefully deal with the extral terms produced by the anisotropy $f$.
  \begin{proposition}\label{s2.pro2.4}
      Let $\Sigma_t$ be a smooth, complete strictly convex graph solution of (\ref{flow1}). Then the following evolution equations hold:
        \begin{align}
          \partial_t g_{ij}=&-2f(\nu)K^{\alpha}h_{ij}-\nabla_iK^{\alpha}f_j(\nu)-\nabla_jK^{\alpha}f_i(\nu)
          -K^{\alpha}f_{j\ell}h_{i}^\ell-K^{\alpha}f_{i\ell}h_{j}^\ell,\label{metric1}\\
         \partial_t \nu=&f(\nu)\nabla K^{\alpha},\label{normal vector}\\
          \partial_t h_{ij}=&\mathcal{L} h_{ij}+\alpha fK^{\alpha}\left(\alpha b^{k\ell}b^{mn}-b^{km}b^{\ell n}\right)\nabla_ih_{mn}\nabla_jh_{k\ell}
          +\alpha fK^{\alpha}Hh_{ij}\nonumber\\
          & -n\alpha fK^{\alpha}h_{im}h^{m}_j-K^{\alpha}(f_{k\ell}+fg_{k\ell})h_{j}^\ell h_{i}^k,\label{2nd fundamental form1}\\
          \partial_t K^{\alpha}=&\mathcal{L} K^{\alpha}+\alpha fK^{2\alpha}H+\alpha K^{2\alpha}f_{k\ell}g^{jk}h_{j}^\ell+2\alpha K^{\alpha}\langle\bar{\nabla} f, \nabla K^{\alpha}\rangle,\label{curvature}\\
           \partial_t(f(\nu)K^{\alpha})=&\mathcal{L}(f(\nu)K^{\alpha})+\alpha f^2K^{2\alpha}H,\label{speed function}\\
            \partial_t \upsilon =& \mathcal{L} \upsilon-2\upsilon^{-1}\Vert \nabla\upsilon\Vert^2_{\mathcal{L}}-\alpha fK^{\alpha}H\upsilon,\label{gradient function}
      \end{align}
  where $f_i, f_{ij}$ are components of $\bar{\nabla}f$ and the Hessian of $f$ with respect to the basis $\{\partial_iF,~i=1,\cdots,n\}$ of the tangent space $T_p\Sigma_t$, and the upper indices such as $h_i^\ell, h^{j\ell}$ are lifted using $g^{ij}$.
  \end{proposition}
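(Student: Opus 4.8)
The computation follows the standard scheme for fully nonlinear curvature flows (cf.\ \cite{andrews2020extrinsic}); the only new feature is the careful handling of the terms generated by the anisotropy $f(\nu)$. The plan is to split the velocity as
\[
\partial_t F=-K^\alpha\nu_f=-\Phi\,\nu+T,\qquad \Phi:=f(\nu)K^\alpha,\quad T:=-K^\alpha\bar\nabla f(\nu)=-K^\alpha f_k g^{k\ell}\partial_\ell F,
\]
so that $\Phi>0$ is the normal speed and $T$ is a tangent vector field whose contribution to any geometric tensor is the Lie derivative $\mathcal{L}_T$ (the modified flow is the original flow reparametrised by the diffeomorphism generated by $T$). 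I would first record the pointwise identities coming from the Weingarten relation $\nabla_i\nu=h_i^k\partial_k F$, the identification $T_p\Sigma_t\cong T_{\nu(p)}\mathbb{S}^n$, and the structure equations of $\mathbb{S}^n\subset\mathbb{R}^{n+1}$:
\[
\nabla_i\big(f(\nu)\big)=f_k h_i^k,\qquad \nabla_i f_\ell=h_i^k f_{k\ell},\qquad \partial_i\nu_f=h_i^k\big(f g_{k\ell}+f_{k\ell}\big)g^{\ell m}\partial_m F .
\]
The last one says that $\partial_i\nu_f$ is purely tangential; these three identities pin down exactly where the factors $f_i$, $f_{ij}$ will appear. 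Combining the first two with the Codazzi equation also gives $\nabla_i\nabla_j\big(f(\nu)\big)=h_i^k f_{k\ell}h_j^\ell+f_k\nabla^k h_{ij}$, which I will use repeatedly.

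\emph{The two easy equations.} Differentiating $g_{ij}=\langle\partial_i F,\partial_j F\rangle$ in time, commuting $\partial_t$ with $\partial_i$, and substituting $\partial_i(K^\alpha\nu_f)$ via the identities above yields \eqref{metric1}: the normal part gives $-2f K^\alpha h_{ij}$ and the tangential part contributes the Lie-derivative terms $\nabla_i T_j+\nabla_j T_i$ with $T_j=-K^\alpha f_j$, which unpack to the four remaining terms. For $\partial_t\nu$, which is tangential since $|\nu|=1$, one computes $\langle\partial_t\nu,\partial_i F\rangle=-\langle\nu,\partial_i\partial_t F\rangle$; here the two anisotropic contributions $\langle\nu,\partial_i(f\nu)\rangle$ and $\langle\nu,\partial_i\bar\nabla f\rangle$ cancel each other exactly --- this cancellation is precisely the reason for flowing along $\nu_f$ rather than $\nu$ --- leaving \eqref{normal vector}, $\partial_t\nu=f\nabla K^\alpha$.

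\emph{The second fundamental form.} This is the main computation. From the standard variation formula $\partial_t h_{ij}=\nabla_i\nabla_j\Phi-\Phi\,h_{ik}h^k_j+\mathcal{L}_T h_{ij}$ with $\Phi=fK^\alpha$, I would expand $\nabla_i\nabla_j(fK^\alpha)$ by the product rule. Using $\nabla_i(f(\nu))=f_k h_i^k$ and $\nabla_i T^k=-\nabla_i K^\alpha\, f^k-K^\alpha\nabla_i f^k$ with $\nabla_i f^k=g^{km}h_i^p f_{pm}$, the cross terms $\nabla_i(f(\nu))\nabla_j K^\alpha+\nabla_j(f(\nu))\nabla_i K^\alpha$ cancel against the $\nabla K^\alpha$-terms in $\mathcal{L}_T h_{ij}$, and the term $K^\alpha f^k\nabla_k h_{ij}$ coming from $K^\alpha\nabla_i\nabla_j(f(\nu))$ cancels $T^k\nabla_k h_{ij}$; what survives from the $f$-factor is $K^\alpha h_i^k h_j^\ell f_{k\ell}$ minus twice the same quantity (the latter from $h_{kj}\nabla_i T^k+h_{ik}\nabla_j T^k$), hence $-K^\alpha h_i^k h_j^\ell f_{k\ell}$, and combined with $-fK^\alpha h_{ik}h^k_j$ this gives the last term $-K^\alpha(f_{k\ell}+fg_{k\ell})h_i^k h_j^\ell$ of \eqref{2nd fundamental form1}. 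For the $K^\alpha$-factor, $\nabla_j K^\alpha=\alpha K^\alpha b^{k\ell}\nabla_j h_{k\ell}$ together with $\nabla_i b^{k\ell}=-b^{km}b^{\ell n}\nabla_i h_{mn}$ produces the quadratic gradient term $\alpha fK^\alpha(\alpha b^{k\ell}b^{mn}-b^{km}b^{\ell n})\nabla_i h_{mn}\nabla_j h_{k\ell}$ together with $\alpha fK^\alpha b^{k\ell}\nabla_i\nabla_j h_{k\ell}$; commuting covariant derivatives in the latter via Codazzi and the Gauss equation (Simons' identity) gives $b^{k\ell}\nabla_i\nabla_j h_{k\ell}=b^{k\ell}\nabla_k\nabla_\ell h_{ij}+Hh_{ij}-n\,h_{ik}h^k_j$, i.e.\ $\mathcal{L}h_{ij}+\alpha fK^\alpha Hh_{ij}-n\alpha fK^\alpha h_{ik}h^k_j$. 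Carrying out this bookkeeping --- in particular verifying that $\mathcal{L}_T h_{ij}$ is entirely absorbed and that the Simons terms come out with the coefficients $+\alpha fK^\alpha H$ and $-n\alpha fK^\alpha$ --- is the step I expect to be the most delicate and error-prone.

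\emph{The remaining equations.} These follow by combination. For \eqref{curvature} one uses $\partial_t K^\alpha=\alpha K^\alpha\big(b^{ij}\partial_t h_{ij}-g^{ij}\partial_t g_{ij}\big)$, substitutes \eqref{metric1} and \eqref{2nd fundamental form1}, and observes (via $b^{ij}\nabla_\ell h_{ij}=(\alpha K^\alpha)^{-1}\nabla_\ell K^\alpha$) that the fully contracted $\nabla h\ast\nabla h$ terms cancel and that the two copies of $n\alpha fK^\alpha H$ cancel, leaving $\mathcal{L}K^\alpha+\alpha fK^{2\alpha}H+\alpha K^{2\alpha}f_{k\ell}g^{jk}h_j^\ell+2\alpha K^\alpha\langle\bar\nabla f,\nabla K^\alpha\rangle$. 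For \eqref{speed function} one writes $\partial_t(f(\nu)K^\alpha)=K^\alpha\langle\bar\nabla f,\partial_t\nu\rangle+f\partial_t K^\alpha=fK^\alpha\langle\bar\nabla f,\nabla K^\alpha\rangle+f\partial_t K^\alpha$ and, expanding $\mathcal{L}(f(\nu)K^\alpha)=f\mathcal{L}K^\alpha+\alpha fK^{2\alpha}b^{ij}\nabla_i\nabla_j(f(\nu))+2\langle\nabla(f(\nu)),\nabla K^\alpha\rangle_{\mathcal{L}}$ with $b^{ij}\nabla_i\nabla_j(f(\nu))=g^{jk}h_j^\ell f_{k\ell}+(\alpha K^\alpha)^{-1}\langle\bar\nabla f,\nabla K^\alpha\rangle$, checks that all the $f_{k\ell}$- and $\langle\bar\nabla f,\nabla K^\alpha\rangle$-terms match, so that only $\mathcal{L}(f(\nu)K^\alpha)+\alpha f^2K^{2\alpha}H$ survives. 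Finally \eqref{gradient function} follows from $\partial_t\upsilon=\upsilon^2\langle\partial_t\nu,e_{n+1}\rangle=f\upsilon^2\langle\nabla K^\alpha,e_{n+1}\rangle$ together with the graph identity $b^{ij}\nabla_i\nabla_j\upsilon=2\upsilon^{-1}b^{ij}\nabla_i\upsilon\nabla_j\upsilon+\upsilon H+(\alpha K^\alpha)^{-1}\upsilon^2\langle\nabla K^\alpha,e_{n+1}\rangle$ (obtained from $\nabla_i\upsilon=\upsilon^2 h_i^k\langle\partial_k F,e_{n+1}\rangle$, Codazzi, and $\nabla_j\langle\partial_k F,e_{n+1}\rangle=\upsilon^{-1}h_{jk}$), which rearranges to $\partial_t\upsilon=\mathcal{L}\upsilon-2\upsilon^{-1}\Vert\nabla\upsilon\Vert^2_{\mathcal{L}}-\alpha fK^\alpha H\upsilon$.
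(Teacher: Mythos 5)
Your proposal is correct and follows essentially the same computation as the paper: your normal--tangential splitting $\partial_tF=-fK^\alpha\nu+T$ with the Lie-derivative bookkeeping for $T=-K^\alpha\bar\nabla f(\nu)$ reproduces exactly the paper's intermediate identity $\partial_th_{ij}=f\nabla_i\nabla_jK^\alpha-K^\alpha(f_{k\ell}+fg_{k\ell})h_i^kh_j^\ell$, after which the expansion of $f\nabla_i\nabla_jK^\alpha$ via Simons' identity and the derivations of the remaining equations coincide with the paper's. The only cosmetic differences are that the paper differentiates $g_{ij}$, $\nu$ and $h_{ij}=-\langle\nabla_i\nabla_jF,\nu\rangle$ directly rather than quoting the variation formulas, and that it substitutes the intermediate form of $\partial_th_{ij}$ (rather than the final one) into the computation of $\partial_tK^\alpha$, so the $\nabla h\ast\nabla h$ terms assemble directly into $\mathcal{L}K^\alpha$ rather than cancelling against its expansion as in your bookkeeping.
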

  \begin{proof}[Proof of (\ref{metric1})]
  In local coordinates, the induced metric has components $g_{ij}=\langle \nabla_iF,\nabla_jF\rangle$. Then
      \begin{align}\label{s3.1}
          \partial_tg_{ij}=&\langle \partial_t\nabla_iF, \nabla_jF \rangle +\langle \nabla_iF,\partial_t\nabla_jF\rangle
          = \langle \nabla_i\partial_tF, \nabla_jF \rangle +\langle \nabla_iF,\nabla_j\partial_tF\rangle.
      \end{align}
      Along the flow \eqref{flow1}, we have
      \begin{eqnarray}\label{s3.2}
        \langle \nabla_i\partial_tF, \nabla_jF \rangle&=&
        \left\langle -\nabla_i\Big(f(\nu)K^{\alpha}\nu+K^{\alpha}\bar{\nabla} f(\nu)\Big), \nabla_jF \right\rangle\nonumber\\
        &=&-\Big\langle f(\nu)K^{\alpha}\nabla_i\nu+\nabla_iK^{\alpha}\bar{\nabla} f(\nu)
        +K^{\alpha}\nabla_i\left(\bar{\nabla} f(\nu)\right), \nabla_jF \Big\rangle.
           \end{eqnarray}
      By Weingarten equation $\langle \nabla_i\nu,\nabla_jF\rangle =h_{ij}$. We also have $\langle \bar{\nabla} f,\nabla_jF\rangle =f_j$. For the third term on the right hand side of \eqref{s3.2}, since $\bar{\nabla} f(\nu)=f_{k}(\nu)\partial_\ell F g^{k\ell}$ we have
      \begin{align*}
       \langle \nabla_i\left(\bar{\nabla} f(\nu)\right), \nabla_jF \rangle =& \langle\nabla_i(f_{k}(\nu)\partial_\ell F g^{k\ell}), \nabla_jF\rangle \\
        =& \nabla_i(f_k(\nu))g^{k\ell}g_{\ell j}\\
        =&\langle\bar{\nabla}(f_{j}(\nu)),\nabla_i\nu\rangle \\
        =&f_{j\ell}h_{i}^\ell.
      \end{align*}
      It follows that
       \begin{eqnarray}\label{s3.2a}
        \langle \nabla_i\partial_tF, \nabla_jF \rangle&=&
      -f(\nu)K^{\alpha}h_{ij}-\nabla_iK^{\alpha}f_j(\nu)-K^{\alpha}f_{j\ell}h_{i}^\ell.
      \end{eqnarray}
      Substituting \eqref{s3.2a} into \eqref{s3.1} gives the equation \eqref{metric1}.
  \end{proof}
  \begin{proof}[Proof of (\ref{normal vector})]
We compute that
      \begin{eqnarray*}
         \langle \partial_t\nu,\nabla_iF\rangle &=&-\langle \nu,\nabla_i\partial_tF\rangle \\
          &=&\left\langle \nu,\nabla_i\left(f(\nu)K^{\alpha}\nu+K^{\alpha}\bar{\nabla} f(\nu)\right)\right\rangle\\
          &=&\nabla_i\left(f(\nu)K^{\alpha}\right)+
          K^{\alpha}\langle \nu,\nabla_i\left(\bar{\nabla} f(\nu)\right) \rangle\\
          &=&f(\nu)\nabla_i K^{\alpha}+K^{\alpha}f_k(\nu)h_{i}^k-K^{\alpha}f_k(\nu)h_{i\ell}g^{k\ell}\\
          &=&f(\nu)\nabla_i K^{\alpha}.
      \end{eqnarray*}
Since $\vert\nu\vert^2=1$ implies $\langle \partial_t\nu,\nu\rangle=0$. We obtain \eqref{normal vector}.
  \end{proof}
  \begin{proof}[Proof of (\ref{2nd fundamental form1})]
      By definition, we have
      \begin{equation*}
        \nabla_i\nabla_jF=-h_{ij}\nu,
      \end{equation*}
      so that
      \begin{equation*}
        h_{ij}=-\langle \nabla_i\nabla_jF,\nu\rangle.
      \end{equation*}
      Along the flow \eqref{flow1}, we compute that
      \begin{eqnarray}\label{eq-4}
          \partial_th_{ij}&=&-\partial_t\langle\nabla_i\nabla_jF,\nu\rangle\nonumber\\
          &=&-\langle\nabla_i\nabla_j\partial_tF,\nu\rangle-\langle\nabla_i\nabla_jF,\partial_t\nu\rangle\nonumber\\
          &=&\Big\langle\nabla_i\nabla_j\left(f(\nu)K^{\alpha}\nu+K^{\alpha}\bar{\nabla} f(\nu)\right),\nu\Big\rangle
          +\left\langle h_{ij}\nu, \partial_t\nu\right\rangle\nonumber\\
          &=&\nabla_i\nabla_j(f(\nu))K^\alpha+\nabla_i(f(\nu))\nabla_jK^\alpha+\nabla_j(f(\nu))\nabla_iK^\alpha+f\nabla_i\nabla_jK^\alpha\nonumber\\
          &&+fK^\alpha\langle \nabla_i\nabla_j\nu,\nu\rangle +\nabla_iK^\alpha\langle \nabla_j(\bar{\nabla} f(\nu)),\nu\rangle +\nabla_jK^\alpha\langle \nabla_i(\bar{\nabla} f(\nu)),\nu\rangle\nonumber\\
          &&+K^\alpha\langle\nabla_i\nabla_j(\bar{\nabla} f(\nu)),\nu\rangle.
      \end{eqnarray}
      $\langle\nabla_i\nu,\nu\rangle=0$ implies
      \begin{equation}\label{eq-5}
         \langle\nabla_i\nabla_j\nu,\nu\rangle=-\langle\nabla_i\nu,\nabla_j\nu\rangle=-h_{i\ell}h^{\ell}_j.
      \end{equation}
     For the derivatives of the anisotropy $f$, we have
      \begin{align}\label{eq-6}
        \langle\nabla_j\left(\bar{\nabla} f(\nu)\right),\nu\rangle=&
        -\langle\bar{\nabla} f(\nu),\nabla_j\nu\rangle\nonumber\\
        =&-\langle f_k\partial_\ell Fg^{k\ell},\nabla_j\nu\rangle \nonumber\\
        =&-f_kh_j^k\nonumber\\
        =&-\nabla_j\left(f(\nu)\right).
      \end{align}
      Similarly,
     \begin{eqnarray}\label{eq-8}
        \langle\nabla_i\nabla_j\left(\bar{\nabla} f(\nu)\right),\nu\rangle&=&
        \nabla_i\langle\nabla_j\left(\bar{\nabla} f(\nu)\right),\nu\rangle-
        \langle\nabla_j\left(\bar{\nabla} f(\nu)\right),\nabla_i\nu\rangle\nonumber\\
        &=&-\nabla_i\nabla_j(f(\nu))-\langle\nabla_j\left(f_k(\nu)\partial_\ell F g^{k\ell}\right),\nabla_i\nu\rangle\nonumber\\
        &=&-\nabla_i\nabla_j(f(\nu))-f_{k\ell}h_{j}^\ell h_{i}^k.
      \end{eqnarray}
      Now, combining \eqref{eq-4} -- \eqref{eq-8}, we deduce that
      \begin{eqnarray}\label{s3.h}
        \partial_th_{ij}&=&f\nabla_i\nabla_jK^{\alpha}-K^{\alpha}(f_{k\ell}+fg_{k\ell})h_{j}^\ell h_{i}^k.
      \end{eqnarray}
      Next,
      \begin{eqnarray*}
        f\nabla_i\nabla_jK^{\alpha}&=&f\nabla_i(\alpha K^{\alpha}b^{mn}h_{mn,j})\\
        &=&\alpha^2fK^{\alpha}b^{kl}b^{mn}h_{kl,i}h_{mn,j}+\alpha fK^{\alpha}\nabla_ib^{mn}h_{mn,j}+\alpha fK^{\alpha}b^{mn}h_{mn,ji}\\
        &=&\alpha fK^{\alpha}b^{mn}h_{mn,ji}+\alpha^2fK^{\alpha}b^{kl}b^{mn}h_{kl,i}h_{mn,j}-\alpha fK^{\alpha}b^{km}b^{ln}h_{kl,i}h_{mn,j}\\
        &=&\alpha fK^{\alpha}b^{mn}h_{mn,ji}+\alpha fK^{\alpha}\left(\alpha b^{kl}b^{mn}-b^{km}b^{ln}\right)h_{kl,i}h_{mn,j}.
      \end{eqnarray*}
  By the similar computation used in the proof of Simons' identity (see \cite[Chapter 2.1]{colding2011course}),
      \begin{eqnarray*}
        \alpha fK^{\alpha}b^{mn}h_{mn,ji}&=&\alpha fK^{\alpha}b^{mn}h_{mj,ni}\\
        &=&\alpha fK^{\alpha}b^{mn}h_{mj,in}+\alpha fK^{\alpha}b^{mn}R_{nikj}h^{k}_m+\alpha fK^{\alpha}b^{mn}R_{nikm}h^{k}_j\\
        &=&\alpha fK^{\alpha}b^{mn}h_{ij,mn}+\alpha fK^{\alpha}b^{mn}(h_{nk}h_{ij}-h_{nj}h_{ik})h^{k}_m\\
        &&+\alpha fK^{\alpha}b^{mn}(h_{nk}h_{im}-h_{nm}h_{ik})h^{k}_j\\
        &=&\mathcal{L}h_{ij}+\alpha fK^{\alpha}(h_{nk}h_{ij}-h_{nj}h_{ik})g^{kn}-(n-1)\alpha fK^{\alpha}h_{ik}h^{k}_j\\
        &=&\mathcal{L}h_{ij}+\alpha fK^{\alpha}Hh_{ij}-n\alpha fK^{\alpha}h_{ik}h^{k}_j.
      \end{eqnarray*}
      Therefore
      \begin{eqnarray}\label{s3.h2}
        f\nabla_i\nabla_jK^{\alpha}&=&\mathcal{L} h_{ij}
        +\alpha fK^{\alpha}\left(\alpha b^{kl}b^{mn}-b^{km}b^{ln}\right)\nabla_ih_{mn}\nabla_jh_{kl}\nonumber\\
        &&+\alpha fK^{\alpha}Hh_{ij}-n\alpha fK^{\alpha}h_{im}h^{m}_j.
      \end{eqnarray}
      Substituting \eqref{s3.h2} into \eqref{s3.h}, we obtain \eqref{2nd fundamental form1}.
  \end{proof}
  \begin{proof}[Proof of (\ref{curvature})]
     By definition $K=\det(h_{ij})\det(g^{ij})$, using \eqref{s3.h},\eqref{metric1} and $g^{ij}g_{jk}=\delta_k^i$ we derive that
      \begin{eqnarray*}
          \partial_tK^{\alpha}&=&\alpha K^{\alpha}\partial_t\left(\log(\det h_{ij})+\log(\det g^{ij})\right)\\
         & =&\alpha K^{\alpha}b^{ij}\partial_th_{ij}+\alpha K^{\alpha}g_{ij}\partial_tg^{ij}\\
           &=&\alpha K^{\alpha}b^{ij}\partial_th_{ij}-\alpha K^{\alpha}g^{ij}\partial_tg_{ij}\\
          &=&\alpha K^{\alpha}b^{ij}\Big(f\nabla_i\nabla_jK^{\alpha}-K^{\alpha}(f_{k\ell}+fg_{k\ell})h_{j}^\ell h_{i}^k\Big)
          +\alpha K^{\alpha}g^{ij}\Big(2f(\nu)K^{\alpha}h_{ij}\\
          &&+\nabla_iK^{\alpha}f_j(\nu)+\nabla_jK^{\alpha}f_i(\nu)
        +K^{\alpha}f_{j\ell}(\nu)h_i^{\ell}+K^{\alpha}f_{i\ell}(\nu)h_j^{\ell}\Big)\\
          &=&\mathcal{L} K^{\alpha}+\alpha fK^{2\alpha}H+\alpha K^{2\alpha}f_{k\ell}g^{jk}h_{j}^\ell+2\alpha K^{\alpha}\langle\nabla f, \nabla K^{\alpha}\rangle.
      \end{eqnarray*}
  \end{proof}

  \begin{proof}[Proof of (\ref{speed function})]
 Firstly, by \eqref{normal vector},
      \begin{eqnarray*}
          \partial_tf(\nu)=\langle\bar{\nabla}f, \partial_t\nu\rangle=f\langle\bar{\nabla} f,\nabla K^{\alpha}\rangle.
      \end{eqnarray*}
    Meanwhile,
    \begin{align*}
      \nabla_kK^{\alpha}=&\alpha K^{\alpha-1}\nabla_k(\det(g^{ij})\det(h_{ij})) \\
     =&\alpha K^{\alpha-1}\det(g^{ij})\det(h_{ij})b^{mn}h_{mn,k}=
    \alpha K^{\alpha}b^{ij}h_{ij,k}.
    \end{align*}
    Then
    \begin{eqnarray*}
        \mathcal{L}(f(\nu))&=&\alpha fK^{\alpha}b^{ij}\nabla_i\nabla_j(f(\nu))=\alpha fK^{\alpha}b^{ij}\nabla_i(f_kh_{j}^k)\\
        &=&\alpha fK^{\alpha}b^{ij}f_{k\ell}h_{i}^\ell h_{j}^k+\alpha fK^{\alpha}b^{ij}f_k\nabla_ih_{j}^k\\
        &=&\alpha fK^{\alpha}f_{k\ell}g^{j\ell}h_{j}^k+f\langle\bar{\nabla} f,\nabla K^{\alpha}\rangle,
    \end{eqnarray*}
    where we used the Codazzi equation $\nabla_ih_{jk}=\nabla_kh_{ij}$. This gives
    \begin{equation}\label{anisotropic function}
      \partial_t f(\nu)=\mathcal{L}f(\nu)-\alpha fK^{\alpha}f_{k\ell}g^{j\ell}h_{j}^k.
    \end{equation}
      Combining \eqref{anisotropic function} with \eqref{curvature} yields
      \begin{eqnarray*}
          \partial_t(f(\nu)K^{\alpha})&=&f(\nu)\partial_tK^{\alpha}+\partial_t(f(\nu))K^{\alpha}\\
          &=&f\left(\mathcal{L} K^{\alpha}+\alpha fK^{2\alpha}H+\alpha K^{2\alpha}f_{k\ell}g^{jk}h_{j}^\ell+2\alpha K^{\alpha}\langle\bar{\nabla} f, \nabla K^{\alpha}\rangle\right)\\
          &&+K^{\alpha}\left(\mathcal{L}f-\alpha fK^{\alpha}f_{k\ell}g^{j\ell}h_{j}^k\right)\\
          &=&\mathcal{L}(fK^{\alpha})-2\alpha f K^{\alpha}b^{ij}\nabla_i(f(\nu))\nabla_jK^{\alpha}+\alpha f^2K^{2\alpha}H\\
          &&+2\alpha fK^{\alpha}\langle\bar{\nabla} f, \nabla K^{\alpha}\rangle\\
          &=&\mathcal{L}(fK^{\alpha})-2\alpha f K^{\alpha}b^{ij}f_kh_{i}^k\nabla_jK^{\alpha}+\alpha f^2K^{2\alpha}H\\
          &&+2\alpha fK^{\alpha}\langle\bar{\nabla} f, \nabla K^{\alpha}\rangle\\
          &=&\mathcal{L}(fK^{\alpha})+\alpha f^2K^{2\alpha}H.
      \end{eqnarray*}
  \end{proof}
\begin{proof}[Proof of (\ref{gradient function})]
By \eqref{normal vector} we have
      \begin{equation}\label{s3.v1}
          \partial_t\upsilon=\upsilon^2\langle\partial_t\nu, e_{n+1}\rangle=f\upsilon^2\langle \nabla K^{\alpha}, e_{n+1}\rangle.
      \end{equation}
    On the other hand,
    \begin{eqnarray}\label{s3.v2}
        \mathcal{L} \upsilon&=&\alpha fK^{\alpha}b^{ij}\nabla_i\nabla_j\upsilon
        =\alpha fK^{\alpha}b^{ij}\nabla_i\left(\upsilon^2\langle \nabla_j\nu, e_{n+1}\rangle \right)\nonumber\\
        &=&2\upsilon^{-1}\alpha fK^{\alpha}b^{ij}\nabla_i\upsilon\nabla_j\upsilon+\alpha fK^{\alpha}b^{ij}\upsilon^2
        \nabla_i\langle h_{jk}\nabla^kF, e_{n+1}\rangle\nonumber\\
        &=&2\upsilon^{-1}\alpha fK^{\alpha}b^{ij}\nabla_i\upsilon\nabla_j\upsilon+\upsilon^2\langle\alpha fK^{\alpha}b^{ij}h_{ij,k}\nabla^kF,e_{n+1}\rangle\nonumber\\
        &&   -\alpha fK^{\alpha}b^{ij}\upsilon^2h_{jk}h^k_i\langle\nu,e_{n+1}\rangle\nonumber\\
        &=&2\upsilon^{-1}\Vert \nabla\upsilon\Vert^2_{\mathcal{L}}
        +f\upsilon^2\langle e_{n+1}, \nabla K^{\alpha}\rangle+\alpha fK^{\alpha}H\upsilon.
    \end{eqnarray}
    Combining \eqref{s3.v1} and \eqref{s3.v2} gives the equation \eqref{gradient function}.
  \end{proof}

  \section{Local gradient estimate}\label{sec2}
  In this section, we derive the local estimate for the gradient function $\upsilon$ defined in \eqref{s2.v}. The proof relies on choosing a suitable cut-off function, and the property of the support function $f$ in Proposition \ref{shift point} plays a crucial role during the estimate.

  Assume that $\Sigma_t$ is a strictly convex graph solution of \eqref{flow1}. Let $\bar{u}(p,t)=\langle F(p,t),e_{n+1}\rangle$ denote the height function. Given constants $N>0$ and $\beta\geq0$, we define the cut-off function $\psi_{\beta}$ by
  \begin{equation}\label{s4.cutf}
    \psi_{\beta}(p,t)=(N-\beta t-\bar{u}(p,t))_+=\max\{N-\beta t-\bar{u}(p,t),0\}.
  \end{equation}
 \begin{lemma}
 	Along the flow (\ref{flow1}), the cut-off function $\psi_{\beta}$ evolves by
  \begin{equation}\label{cut-off fun1}
    \partial_t\psi_{\beta}(p,t)=\mathcal{L}\psi_{\beta}(p,t)+n\alpha fK^{\alpha}\upsilon^{-1}
    +K^{\alpha}\langle f(\nu)\nu+\bar{\nabla} f(\nu), e_{n+1}\rangle-\beta.
  \end{equation}
\end{lemma}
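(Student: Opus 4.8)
The plan is to reduce everything to the evolution of the height function $\bar u(p,t)=\langle F(p,t),e_{n+1}\rangle$. On the open set $\{\psi_\beta>0\}$ one simply has $\psi_\beta(p,t)=N-\beta t-\bar u(p,t)$, so that $\partial_t\psi_\beta=-\beta-\partial_t\bar u$ and $\mathcal{L}\psi_\beta=-\mathcal{L}\bar u$; on the interior of $\{\psi_\beta=0\}$ both sides of \eqref{cut-off fun1} vanish, and since $\psi_\beta$ is only Lipschitz along the free boundary the identity \eqref{cut-off fun1} is to be understood in the region where $\psi_\beta$ is positive, which is all that the maximum principle argument in the gradient estimate below requires. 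Hence it suffices to compute $\partial_t\bar u$ and $\mathcal{L}\bar u$.

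For the time derivative, the flow equation \eqref{flow1} gives at once
\begin{equation*}
  \partial_t\bar u=\langle\partial_tF,e_{n+1}\rangle=-K^{\alpha}\langle f(\nu)\nu+\bar{\nabla}f(\nu),e_{n+1}\rangle=-K^\alpha\langle\nu_f,e_{n+1}\rangle .
\end{equation*}
For the spatial part I would differentiate $\bar u$ twice along $\Sigma_t$: from $\nabla_i\bar u=\langle\nabla_iF,e_{n+1}\rangle$ and the Gauss relation $\nabla_i\nabla_jF=-h_{ij}\nu$, together with $\langle\nu,e_{n+1}\rangle=-\upsilon^{-1}$ coming from \eqref{s2.v}, one gets $\nabla_i\nabla_j\bar u=-h_{ij}\langle\nu,e_{n+1}\rangle=h_{ij}\upsilon^{-1}$. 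Contracting with $\alpha fK^{\alpha}b^{ij}$ and using $b^{ij}h_{ij}=n$ yields $\mathcal{L}\bar u=n\alpha fK^{\alpha}\upsilon^{-1}$.

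Combining the two computations, on $\{\psi_\beta>0\}$ we obtain
\begin{equation*}
  \partial_t\psi_\beta=-\beta+K^{\alpha}\langle f(\nu)\nu+\bar{\nabla}f(\nu),e_{n+1}\rangle
  =\mathcal{L}\psi_\beta+n\alpha fK^{\alpha}\upsilon^{-1}+K^{\alpha}\langle f(\nu)\nu+\bar{\nabla}f(\nu),e_{n+1}\rangle-\beta ,
\end{equation*}
which is precisely \eqref{cut-off fun1}. There is no real obstacle here: the statement follows from a short direct computation, and the only points demanding care are the sign bookkeeping induced by the convention $\upsilon=-\langle\nu,e_{n+1}\rangle^{-1}$ and the choice to keep the anisotropic term $K^{\alpha}\langle f(\nu)\nu+\bar{\nabla}f(\nu),e_{n+1}\rangle$ intact, since it is exactly the $e_{n+1}$-component of the speed of the modified flow \eqref{flow1}. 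Its role becomes clear only afterwards: this term has a favourable sign by Proposition \ref{shift point} whenever $\langle\nu,e_{n+1}\rangle\le 0$, which is what will make the cut-off function a usable barrier in the local gradient estimate.
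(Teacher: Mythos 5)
Your proof is correct and follows essentially the same route as the paper: compute $\partial_t\bar u$ directly from the flow and $\mathcal{L}\bar u$ via $\nabla_i\nabla_jF=-h_{ij}\nu$ together with $\langle\nu,e_{n+1}\rangle=-\upsilon^{-1}$, then combine, the point being that $\mathcal{L}\psi_\beta+n\alpha fK^{\alpha}\upsilon^{-1}=0$. The only slight inaccuracy is the aside that both sides of \eqref{cut-off fun1} vanish on the interior of $\{\psi_\beta=0\}$ --- the right-hand side need not vanish there --- but, as you yourself note, the identity is only needed (and the paper likewise only proves it) on the support of $\psi_\beta$, so this is harmless.
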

   \begin{proof}
    By Theorem \ref{cpt sup}, $\psi_{\beta}$ is compactly supported and on its support we have
    \begin{eqnarray*}
      \partial_t\psi_{\beta}(p,t)=-\beta -\langle\partial_tF,e_{n+1}\rangle=-\beta+K^{\alpha}\langle f(\nu)\nu+\bar{\nabla} f(\nu), e_{n+1}\rangle.
    \end{eqnarray*}
    Moreover,
    \begin{eqnarray*}
      \mathcal{L}\psi_{\beta}(p,t)&=&\alpha fK^{\alpha}b^{ij}\nabla_i\nabla_j\psi_{\beta}(p,t)=-\alpha fK^{\alpha}b^{ij}\langle\nabla_i\nabla_jF,e_{n+1}\rangle\\
      &=&\alpha fK^{\alpha}b^{ij}h_{ij}\langle \nu, e_{n+1}\rangle=n\alpha fK^{\alpha}\langle \nu, e_{n+1}\rangle.
    \end{eqnarray*}
The equation \eqref{cut-off fun1} follows immediately.
 \end{proof}
 \begin{lemma}\label{gradient-est1}
 	Along the flow (\ref{flow1}), the following holds:
   \begin{eqnarray*}
     \partial_t(\psi_{\beta}\upsilon)
     &\leq&\mathcal{L}(\psi_{\beta}\upsilon)-2\alpha fK^{\alpha}b^{ij}\upsilon^{-1}\nabla_i(\psi_{\beta}\upsilon)\nabla_j\upsilon+n\alpha fK^{\alpha}-\beta\upsilon\\
     &&-\alpha fK^{\alpha}H\psi_{\beta}\upsilon.
   \end{eqnarray*}
 \end{lemma}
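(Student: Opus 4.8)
The plan is to compute $\partial_t(\psi_\beta\upsilon)$ directly via the product rule and then absorb the gradient terms into the operator $\mathcal{L}$ and a drift term, disposing of the one anisotropic term by sign. First I would write $\partial_t(\psi_\beta\upsilon)=\psi_\beta\,\partial_t\upsilon+\upsilon\,\partial_t\psi_\beta$ and substitute the evolution equations \eqref{gradient function} for $\upsilon$ and \eqref{cut-off fun1} for $\psi_\beta$. This produces the terms $\psi_\beta\mathcal{L}\upsilon+\upsilon\mathcal{L}\psi_\beta$, the second-order cross terms $-2\psi_\beta\upsilon^{-1}\Vert\nabla\upsilon\Vert^2_{\mathcal{L}}$ from \eqref{gradient function}, the reaction terms $-\alpha fK^\alpha H\psi_\beta\upsilon$ and $n\alpha fK^\alpha-\beta\upsilon$ (using $\upsilon\cdot n\alpha fK^\alpha\upsilon^{-1}=n\alpha fK^\alpha$), and the leftover term $\upsilon K^\alpha\langle f(\nu)\nu+\bar\nabla f(\nu),e_{n+1}\rangle$.

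Next I would recombine the Laplacian-type terms. Since $\mathcal{L}$ is a second-order operator with associated inner product $\langle\cdot,\cdot\rangle_{\mathcal{L}}$, the Leibniz rule gives $\mathcal{L}(\psi_\beta\upsilon)=\psi_\beta\mathcal{L}\upsilon+\upsilon\mathcal{L}\psi_\beta+2\langle\nabla\psi_\beta,\nabla\upsilon\rangle_{\mathcal{L}}$, so $\psi_\beta\mathcal{L}\upsilon+\upsilon\mathcal{L}\psi_\beta=\mathcal{L}(\psi_\beta\upsilon)-2\langle\nabla\psi_\beta,\nabla\upsilon\rangle_{\mathcal{L}}$. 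I would then merge the two first-order/second-order gradient contributions: using $\nabla(\psi_\beta\upsilon)=\upsilon\nabla\psi_\beta+\psi_\beta\nabla\upsilon$, one has
\[
-2\langle\nabla\psi_\beta,\nabla\upsilon\rangle_{\mathcal{L}}-2\psi_\beta\upsilon^{-1}\Vert\nabla\upsilon\Vert^2_{\mathcal{L}}=-2\upsilon^{-1}\langle\nabla(\psi_\beta\upsilon),\nabla\upsilon\rangle_{\mathcal{L}}=-2\alpha fK^\alpha b^{ij}\upsilon^{-1}\nabla_i(\psi_\beta\upsilon)\nabla_j\upsilon,
\]
which is exactly the transport term appearing in the statement. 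At this point $\partial_t(\psi_\beta\upsilon)$ equals the claimed right-hand side plus the single extra term $\upsilon K^\alpha\langle f(\nu)\nu+\bar\nabla f(\nu),e_{n+1}\rangle$.

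Finally I would dispose of this extra term by sign. The direction $e_{n+1}$ is chosen so that $\langle\nu(p),e_{n+1}\rangle\le0$ for all $p\in M^n$ (equation \eqref{s2.2-1}), and then Proposition \ref{shift point}, applied with $x=\nu(p)\in\mathbb{S}^-$, yields $\langle f(\nu)\nu+\bar\nabla f(\nu),e_{n+1}\rangle=\langle\nu_f(p),e_{n+1}\rangle\le0$ as recorded in \eqref{s2.vf}. Since $\upsilon>0$ and $K^\alpha>0$ on a strictly convex graph, the extra term is non-positive, and dropping it turns the identity into the asserted inequality. The only genuine ingredient here is the choice of base point $z_0$ through Proposition \ref{shift point}; everything else is the Leibniz bookkeeping for $\partial_t$ and $\mathcal{L}$, so I do not anticipate a real obstacle, only the need to keep track of signs carefully when substituting \eqref{gradient function} and \eqref{cut-off fun1}.
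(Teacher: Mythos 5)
Your proposal is correct and follows essentially the same route as the paper: apply the product rule with the evolution equations \eqref{gradient function} and \eqref{cut-off fun1}, regroup the Leibniz cross term and the $-2\psi_\beta\upsilon^{-1}\Vert\nabla\upsilon\Vert^2_{\mathcal{L}}$ term into $-2\alpha fK^{\alpha}b^{ij}\upsilon^{-1}\nabla_i(\psi_\beta\upsilon)\nabla_j\upsilon$, and discard the remaining term $\upsilon K^{\alpha}\langle f(\nu)\nu+\bar{\nabla}f(\nu),e_{n+1}\rangle\le 0$ using \eqref{s2.2-1} and Proposition \ref{shift point}. No gaps.
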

 \begin{proof}
   By (\ref{gradient function}) and (\ref{cut-off fun1}), we have
   \begin{eqnarray*}
    \partial_t(\psi_{\beta}\upsilon)-\mathcal{L}(\psi_{\beta}\upsilon)&=&\upsilon\left(\partial_t\psi_{\beta}-\mathcal{L}\psi_\beta\right)+\psi_{\beta}\left(\partial_t\upsilon- \mathcal{L}\upsilon\right)-2\alpha fK^\alpha b^{ij}\nabla_i\psi_\beta\nabla_j\upsilon\\
    &=&\upsilon\Big(n\alpha fK^{\alpha}\upsilon^{-1}
    +K^{\alpha}\langle f(\nu)\nu+\bar{\nabla} f(\nu), e_{n+1}\rangle-\beta\Big)\\
    &&-\psi_{\beta}\left(2\upsilon^{-1}\Vert \nabla\upsilon\Vert^2_{\mathcal{L}}+\alpha fK^{\alpha}H\upsilon\right)-2\alpha fK^\alpha b^{ij}\nabla_i\psi_\beta\nabla_j\upsilon\\
    &=&-2\alpha fK^{\alpha}b^{ij}\upsilon^{-1}\nabla_i(\psi_{\beta}\upsilon)\nabla_j\upsilon+n\alpha fK^{\alpha}-\beta\upsilon\\
    &&-\alpha fK^{\alpha}H\psi_{\beta}\upsilon+K^{\alpha}\langle  f(\nu)\nu+\bar{\nabla} f(\nu), e_{n+1}\rangle\upsilon.
   \end{eqnarray*}
    By Proposition \ref{shift point}, $\upsilon=-\langle\nu, e_{n+1}\rangle^{-1}\geq 0$ leads to $\langle  f(\nu)\nu+\bar{\nabla} f(\nu), e_{n+1}\rangle\leq 0$,
    which implies the lemma.
 \end{proof}
\begin{theorem}[Gradient estimate]\label{Gradient estimate}
  Let $\Sigma_t$ be a complete strictly convex smooth graph solution of (\ref{flow1}) defined on $M^n\times[0,T]$, for some $T>0$. Given
  constants $\beta>0$ and $N\geq \beta$,
  \begin{eqnarray}\label{s4.est}
    (\psi_{\beta}\upsilon)(p,t)\leq \max\Big\{N\sup_{\mathcal{Q}_N}\upsilon(p,0),{N}{\beta}^{-1}n\alpha \max\{\sup_{\mathbb{S}^n}f,1\}\Big\},\quad \forall~(p,t)\in M^n\times [0,T],
  \end{eqnarray}
  where $\mathcal{Q}_N=\{p\in M^n:\bar{u}(p,0)<N\}$.
\end{theorem}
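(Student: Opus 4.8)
The plan is to prove \eqref{s4.est} by the parabolic maximum principle applied to $w:=\psi_{\beta}\upsilon$, exploiting that $\psi_{\beta}$ is compactly supported. As preliminaries I would record two facts. First, by Theorem \ref{cpt sup} (the height blows up towards $\partial\Omega$ and at spatial infinity) the set $\{p:\bar{u}(p,0)<N\}\cong\mathcal{Q}_N$ is precompact in $M^{n}$, and since along \eqref{flow1} the height satisfies $\partial_{t}\bar{u}=-K^{\alpha}\langle f(\nu)\nu+\bar{\nabla}f(\nu),e_{n+1}\rangle\ge 0$ by Proposition \ref{shift point}, we have $\bar{u}(p,t)\ge\bar{u}(p,0)$, so $\{\psi_{\beta}(\cdot,t)>0\}$ stays inside this fixed precompact set and $w$ attains a maximum on $M^{n}\times[0,T]$. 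Second, $\bar{u}(p,t)\ge\inf_{\Omega}u_{0}>0$ forces $\psi_{\beta}=N-\beta t-\bar{u}\le N$ on its support, and $\upsilon=\sqrt{1+|Du|^{2}}\ge 1$ everywhere.

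Let $(p_{0},t_{0})$ realize $\max w$ on $M^{n}\times[0,T]$; if this maximum is $0$ there is nothing to prove, so assume $w(p_{0},t_{0})>0$, which forces $\psi_{\beta}(p_{0},t_{0})>0$ and hence $w$ smooth there. If $t_{0}=0$, then since $\psi_{\beta}(\cdot,0)$ is supported in $\mathcal{Q}_N$ and $\psi_{\beta}(\cdot,0)\le N$ we get at once $w\le N\sup_{\mathcal{Q}_N}\upsilon(\cdot,0)$, the first alternative in \eqref{s4.est}. Otherwise $t_{0}>0$, and at $(p_{0},t_{0})$ we have $\partial_{t}w\ge 0$, $\nabla w=0$, and $\mathcal{L}w\le 0$ (by strict convexity $b^{ij}>0$, and $\nabla^{2}w\le 0$ at a spatial maximum). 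Plugging this into the inequality of Lemma \ref{gradient-est1} kills the gradient term and leaves, at $(p_{0},t_{0})$,
\[
0\ \le\ n\alpha fK^{\alpha}-\beta\upsilon-\alpha fK^{\alpha}H\psi_{\beta}\upsilon .
\]

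From here I would finish by a dichotomy on the size of $K$ at $(p_{0},t_{0})$. If $K\le 1$, discard the nonnegative last term and use $K^{\alpha}\le 1$ to get $\beta\upsilon\le n\alpha fK^{\alpha}\le n\alpha f$; multiplying by $\psi_{\beta}\le N$ gives $w=\psi_{\beta}\upsilon\le Nn\alpha\beta^{-1}\sup_{\mathbb{S}^n}f\le Nn\alpha\beta^{-1}\max\{\sup_{\mathbb{S}^n}f,1\}$, the second alternative in \eqref{s4.est}. If instead $K\ge 1$, then by AM--GM $H\ge nK^{1/n}\ge n$, so $\alpha fK^{\alpha}H\psi_{\beta}\upsilon\ge n\alpha fK^{\alpha}\psi_{\beta}\upsilon$; discarding now the term $-\beta\upsilon\le 0$, the displayed inequality gives $n\alpha fK^{\alpha}\psi_{\beta}\upsilon\le n\alpha fK^{\alpha}$, i.e. $w=\psi_{\beta}\upsilon\le 1$, again dominated by the right-hand side of \eqref{s4.est}. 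Combining these with the $t_{0}=0$ case yields \eqref{s4.est}. I do not expect a genuine obstacle: the maximum-principle mechanics are routine once compact support is in hand, which is precisely what Theorem \ref{cpt sup} (plus the mild monotonicity of the height from Proposition \ref{shift point}) provides. The only point requiring care is the algebra at the interior maximum — in particular, handling the $-\alpha fK^{\alpha}H\psi_{\beta}\upsilon$ term differently according to whether $K\lessgtr 1$, converting $H$ into a power of $K$ via AM--GM in the regime $K\ge 1$, and keeping the factor $\psi_{\beta}\le N$ so that the constants match \eqref{s4.est}.
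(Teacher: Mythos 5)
Your overall strategy coincides with the paper's: the compactly supported test function $\psi_{\beta}\upsilon$, the maximum principle, and the inequality of Lemma \ref{gradient-est1} at an interior maximum, leading to $0\leq n\alpha fK^{\alpha}-\beta\upsilon-\alpha fK^{\alpha}H\psi_{\beta}\upsilon$ at $(p_0,t_0)$. The divergence is in the final algebra, and this is where your argument has a genuine gap. In the case $K\geq 1$ you discard the term $\beta\upsilon$ and conclude only $\psi_{\beta}\upsilon\leq K^{-1/n}\leq 1$, and then assert that $1$ is "again dominated by the right-hand side of \eqref{s4.est}". That assertion is not justified by the hypotheses: the theorem only assumes $N\geq\beta>0$ and $\alpha>0$, so the quantity $N\beta^{-1}n\alpha\max\{\sup_{\mathbb{S}^n}f,1\}$ can be strictly less than $1$ (take $n\alpha<1$, $N=\beta$, $\sup_{\mathbb{S}^n}f\leq 1$), and the first alternative $N\sup_{\mathcal{Q}_N}\upsilon(\cdot,0)$ can also be less than $1$ when $N<1$ and the initial gradient is small on $\mathcal{Q}_N$ (nothing forces $N\geq 1$, since $\inf_{\Omega}u_0>0$ may be tiny). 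So in that parameter regime your case $K\geq 1$ proves a weaker bound than the one stated, and the proof of \eqref{s4.est} is incomplete.

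The paper avoids this by not splitting into cases: dividing by $fK^{\alpha}$, using $\upsilon\geq\psi_{\beta}\upsilon/N$, $\beta\leq N$ and $H\geq nK^{1/n}$, one keeps \emph{both} good terms and applies the weighted Young (AM--GM) inequality
\[
\frac{1}{1+n\alpha}K^{-\alpha}+\frac{n\alpha}{1+n\alpha}K^{1/n}\ \geq\ K^{-\frac{\alpha}{1+n\alpha}+\frac{\alpha}{1+n\alpha}}\ =\ 1,
\]
so that the $K$-dependence cancels uniformly and one gets $\psi_{\beta}\upsilon\leq N\beta^{-1}n\alpha\max\{\sup_{\mathbb{S}^n}f,1\}$ at the interior maximum, exactly the second alternative, for every value of $K$. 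Your $K\leq 1$ case and all the maximum-principle preliminaries (compact support via monotonicity of the height, the $t_0=0$ case) are correct; replacing the $K\geq 1$ branch by this Young's-inequality combination closes the gap. Alternatively, your dichotomy would prove a correct but weaker statement in which $1$ is added inside the max on the right-hand side of \eqref{s4.est}, which is not the theorem as stated.
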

\begin{proof}
    Since the cut-off function $\psi_{\beta}$ is compactly supported, for fixed $T\in(0,+\infty)$, the function $\psi_{\beta}\upsilon$ attains its maximum on
    $M^n\times[0,T]$ at some point $(p_0,t_0)$. If $t_0=0$, then
    \begin{equation}\label{s3.g1}
      (\psi_{\beta}\upsilon)(p,t)\leq  \sup_{p\in M}(\psi_{\beta}\upsilon)(p,0)\leq N\sup_{\mathcal{Q}_N}\upsilon(p,0)
    \end{equation}
    for all $p\in M^n$.  Assume that $t_0>0$, then by Lemma \ref{gradient-est1}, at $(p_0,t_0)$
    we have
    \begin{eqnarray*}
     0\leq\partial_t(\psi_{\beta}\upsilon)-\mathcal{L}(\psi_{\beta}\upsilon)
     &\leq&n\alpha fK^{\alpha}-\beta\upsilon-\alpha fK^{\alpha}H\psi_{\beta}\upsilon,
    \end{eqnarray*}
    which is equivalent to
    \begin{eqnarray}\label{s4.1}
      n\alpha fK^{\alpha}\geq\beta\upsilon+\alpha fK^{\alpha}H\psi_{\beta}\upsilon.
    \end{eqnarray}
    Using the facts that $\psi_\beta\leq N$, $\beta\leq N$, $H\geq nK^{\frac{1}{n}}$ and $f$ is positive and bounded from above, we rearrange \eqref{s4.1} and obtain
    \begin{align}
     n\alpha\geq&~\beta\upsilon f^{-1}K^{-\alpha}+\alpha H\psi_{\beta}\upsilon\nonumber\\
     \geq&~ \frac{\beta}{N} f^{-1} K^{-\alpha}\psi_{\beta}\upsilon+\alpha H\psi_{\beta}\upsilon\nonumber\\
     \geq&~ \frac{\beta}{N}\min\{1/(\sup_{\mathbb{S}^n}f),1\}\left(K^{-\alpha}+n\alpha K^{1/n}\right)\psi_{\beta}\upsilon,\label{s4.2}
    \end{align}
    Applying Young's inequality
    \begin{align*}
     K^{-\alpha}+n\alpha K^{1/n}\geq & \frac{1}{1+n\alpha}K^{-\alpha}+\frac{n\alpha}{1+n\alpha} K^{1/n} ~\geq 1,
    \end{align*}
   then \eqref{s4.2} implies that
   \begin{equation}\label{s3.g2}
    \psi_{\beta}\upsilon\leq ~{N}{\beta}^{-1}n\alpha \max\{\sup_{\mathbb{S}^n}f,1\}.
   \end{equation}
   The estimate \eqref{s4.est} then follows from combining \eqref{s3.g1} and \eqref{s3.g2}.
  \end{proof}

  \section{Lower bound on the principal curvature}\label{sec3}
  In this section, we estimate the lower bound on the smallest principal curvature $\lambda_{\min}$.
    Since the minimum eigenvalue is not necessarily smooth, we adopt the following derivatives of a smooth approximation for the minimum eigenvalue, which was established by Brendle, Choi and Daskalopoulos \cite{brendle2017asymptotic} firstly.
  \begin{lemma}\cite[Lemma 4.1]{choi2023LpMinkowski}\label{approx}
  	Let $\mu$ be the multiplicity of the smallest principal curvature at a point $p_0$ on $\Sigma_{t_0}$ for $t_0>0$ so that $\lambda_1=\cdots=\lambda_{\mu}<\lambda_{\mu+1}\leq\cdots\leq\lambda_n$, where $\lambda_1,\cdots,\lambda_n$ are the principal curvatures. Suppose $\phi$ is a smooth function defined on $\mathcal{M}=\cup_{0<t\leq t_0}\Sigma_t\times\{t\}$ such that $\phi\leq\lambda_1$ on $\mathcal{M}$ and $\phi=\lambda_1$ at the point $(p_0,t_0)$. Then, at this point with a chart $g_{ij}=\delta_{ij}$, $h_{ij}=\delta_{ij}\lambda_i$, we have
  	\begin{align}\label{approx-eq0}
  		\nabla_ih_{k\ell}=\nabla_i\phi\delta_{k\ell},\quad \text{for}\ 1\leq k,\ell\leq\mu,
  	\end{align}
  	\begin{align}\label{approx-eq}
  		\nabla_i\nabla_i\phi\leq\nabla_i\nabla_ih_{11}-2\sum_{\ell>\mu}\frac{(\nabla_ih_{1\ell})^2}{\lambda_{\ell}-\lambda_1},
  	\end{align}
  	and
  	\begin{align}\label{approx-eq1}
  		\partial_t\phi\geq\partial_th_{11}-\partial_tg_{11}\lambda_1.
  	\end{align}
  \end{lemma}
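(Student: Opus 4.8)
The plan is to realize the a priori non-smooth quantity $\lambda_1=\lambda_{\min}$ as a pointwise infimum of smooth Rayleigh quotients of the Weingarten map and to play the inequalities forced at the touching point $(p_0,t_0)$ against a well-chosen test vector field. I would work in a chart obtained by parallel-transporting, along radial geodesics from $p_0$, an orthonormal frame $\{e_i\}_{i=1}^n$ that diagonalizes the second fundamental form at $(p_0,t_0)$, extended to be independent of $t$; then $g_{ij}=\delta_{ij}$, $h_{ij}=\lambda_i\delta_{ij}$ at $(p_0,t_0)$ and the frame is parallel there. Given a unit vector $c=(c_1,\dots,c_\mu)$ and smooth functions $a_{\mu+1},\dots,a_n$ with $a_\ell(p_0,t_0)=0$, I set
\[
v=\sum_{k=1}^{\mu}c_k\,e_k+\sum_{\ell=\mu+1}^{n}a_\ell\,e_\ell,\qquad G_v:=\frac{h(v,v)}{g(v,v)}.
\]
By the variational characterization of the smallest eigenvalue, $G_v\ge\lambda_1$ on $\mathcal{M}$; combined with the hypothesis $\lambda_1\ge\phi$ and the fact that $v(p_0,t_0)=\sum_kc_ke_k$ lies in the $\lambda_1$-eigenspace (so $G_v(p_0,t_0)=\lambda_1=\phi$), this shows $(p_0,t_0)$ is a minimum of $G_v-\phi$ on $\mathcal{M}$. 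Hence there $\nabla(G_v-\phi)=0$, $\nabla^2(G_v-\phi)\ge0$ in the spatial directions, and --- since $t_0$ is the largest time occurring in $\mathcal{M}$ --- $\partial_t(G_v-\phi)\le0$.

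The next step is to compute the derivatives of $G_v$ at $(p_0,t_0)$ for suitable $c,a$, using $g(v,v)=1$ and $\nabla g\equiv0$ there. Taking $a\equiv0$ and $v=\sum_{k\le\mu}c_ke_k$ gives $\nabla_iv=0$ at the point, so $\nabla_iG_v=(\nabla_ih)(v,v)=\sum_{k,m\le\mu}c_kc_m\nabla_ih_{km}$; the identity $\nabla_iG_v=\nabla_i\phi$ for every unit $c$ then forces the $\mu\times\mu$ matrix $(\nabla_ih_{km})$ to equal $\nabla_i\phi\cdot\mathrm{Id}$, which is \eqref{approx-eq0}. Taking instead $v=e_1$ and differentiating $G_v=h(v,v)/g(v,v)$ in $t$ (with $\partial_tv=0$, $g(v,v)=1$, $h(v,v)=\lambda_1$) yields $\partial_tG_v=\partial_th_{11}-\lambda_1\partial_tg_{11}$, so $\partial_tG_v\le\partial_t\phi$ gives \eqref{approx-eq1}.

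The heart of the matter is \eqref{approx-eq}. I would keep $v=e_1$ but now activate the rotation, so that $\nabla_iv=\sum_{\ell>\mu}(\nabla_ia_\ell)e_\ell$ at $(p_0,t_0)$, and expand $\nabla_i\nabla_iG_v$ at that point via the quotient and product rules. Using $a_\ell(p_0,t_0)=0$, the diagonal form of $g$ and $h$, and $h_{1\ell}=0$ for $\ell>\mu$ to eliminate the terms $h(\nabla_i\nabla_iv,v)$ and the off-diagonal products $(\nabla_ia_\ell)(\nabla_ia_m)$, one is left with
\[
\nabla_i\nabla_iG_v=\nabla_i\nabla_ih_{11}+4\sum_{\ell>\mu}(\nabla_ia_\ell)\,\nabla_ih_{1\ell}+2\sum_{\ell>\mu}(\lambda_\ell-\lambda_1)(\nabla_ia_\ell)^2.
\]
Since $\nabla_i\nabla_i\phi\le\nabla_i\nabla_iG_v$ and the $a_\ell$ are free, I would choose $\nabla_ia_\ell=-\nabla_ih_{1\ell}/(\lambda_\ell-\lambda_1)$ (permissible because $\lambda_\ell>\lambda_1$), completing the square to obtain $\nabla_i\nabla_i\phi\le\nabla_i\nabla_ih_{11}-2\sum_{\ell>\mu}(\nabla_ih_{1\ell})^2/(\lambda_\ell-\lambda_1)$, i.e.\ \eqref{approx-eq}. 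The step I expect to be the main obstacle is precisely this last expansion: one must carefully track which second-order terms survive and check that no extraneous contributions --- in particular from $\nabla_i\nabla_iv$ and from the curvature of the frame --- remain; the existence of the required smooth extensions of the frame and of functions $a_\ell$ with prescribed value and first derivative at $(p_0,t_0)$ is routine, and all three conclusions are local and hold for each fixed index $i$ separately.
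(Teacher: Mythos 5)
Your proposal is correct: the paper itself does not prove this lemma but only cites \cite[Lemma 4.1]{choi2023LpMinkowski} (which goes back to Brendle--Choi--Daskalopoulos), and your argument is essentially that standard proof --- realizing $\lambda_{\min}$ via test fields $v$ touching the $\lambda_1$-eigenspace, using the first/second spatial derivative and one-sided time derivative conditions at the touching point, and optimizing the prescribed values of $\nabla_i a_\ell$ to produce the term $-2\sum_{\ell>\mu}(\nabla_i h_{1\ell})^2/(\lambda_\ell-\lambda_1)$. The computation you flag as the main obstacle indeed closes, since the $\nabla_i\nabla_i v$ contributions enter only through $(h-\lambda_1 g)(\nabla_i\nabla_i v,e_1)$, which vanishes because $e_1$ lies in the kernel of the positive semidefinite form $h-\lambda_1 g$ at the point.
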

\begin{lemma}
  Let $\Sigma_t$ be a complete strictly convex smooth graph solution of (\ref{flow1}) defined on $M^n\times[0,T]$, for some $T>0$.  Assume that $\phi$ is the function satisfying the conditions in Lemma \ref{approx}. Then, the following holds at the point $(p_0,t_0)$:
  \begin{align}\label{s4-eq0}
  	\partial_t\phi-\mathcal{L}\phi\geq &-n\alpha fK^{\alpha}\lambda_1^2+\alpha fK^{\alpha}H\lambda_1+2\alpha K^{\alpha}b^{rs}h_{rs,1}f_1\lambda_1+2\alpha fK^{\alpha}\sum_{i}\sum_{\ell>\mu}\frac{(\nabla_ih_{1\ell})^2}{\lambda_i(\lambda_{\ell}-\lambda_1)}\nonumber\\
  	&+\alpha fK^{\alpha}\left(\alpha b^{rs}b^{mn}-b^{rm}b^{sn}\right)\nabla_1h_{mn}\nabla_1h_{rs}
  \end{align}

\end{lemma}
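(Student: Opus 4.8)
The plan is to combine the differential inequalities \eqref{approx-eq} and \eqref{approx-eq1} of Lemma \ref{approx} with the evolution equations \eqref{metric1} and \eqref{2nd fundamental form1}. Work in the chart at $(p_0,t_0)$ with $g_{ij}=\delta_{ij}$ and $h_{ij}=\lambda_i\delta_{ij}$, so $b^{ij}=\lambda_i^{-1}\delta^{ij}$, and note $\lambda_i>0$ while $\lambda_\ell-\lambda_1>0$ for $\ell>\mu$ by strict convexity and the definition of $\mu$. From \eqref{approx-eq1} one has $\partial_t\phi\ge\partial_t h_{11}-\lambda_1\,\partial_t g_{11}$. For the spatial part, write $\mathcal{L}\phi=\alpha fK^\alpha\sum_i\lambda_i^{-1}\nabla_i\nabla_i\phi$ and insert \eqref{approx-eq} one index $i$ at a time; since each coefficient $\alpha fK^\alpha\lambda_i^{-1}$ is positive the inequality is preserved, and recognizing $\alpha fK^\alpha\sum_i\lambda_i^{-1}\nabla_i\nabla_i h_{11}=\mathcal{L}h_{11}$ in this chart gives
\begin{equation*}
\mathcal{L}\phi\ \le\ \mathcal{L}h_{11}-2\alpha fK^\alpha\sum_i\sum_{\ell>\mu}\frac{(\nabla_ih_{1\ell})^2}{\lambda_i(\lambda_\ell-\lambda_1)}.
\end{equation*}
Subtracting, it remains to bound $(\partial_t h_{11}-\mathcal{L}h_{11})-\lambda_1\,\partial_t g_{11}$ from below.

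For this last step I would substitute \eqref{2nd fundamental form1} for $\partial_t h_{11}-\mathcal{L}h_{11}$ and \eqref{metric1} for $\partial_t g_{11}$, both evaluated in the chart above. The term $-\lambda_1\,\partial_t g_{11}$ contributes $2fK^\alpha\lambda_1^2+2\lambda_1 f_1\nabla_1K^\alpha+2K^\alpha f_{11}\lambda_1^2$, while $\partial_t h_{11}-\mathcal{L}h_{11}$ contributes the gradient quadratic $\alpha fK^\alpha(\alpha b^{rs}b^{mn}-b^{rm}b^{sn})\nabla_1h_{mn}\nabla_1h_{rs}$, the curvature terms $\alpha fK^\alpha H\lambda_1-n\alpha fK^\alpha\lambda_1^2$, and the anisotropic term $-K^\alpha(f_{11}+f)\lambda_1^2$. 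Adding everything, the $f$-dependent zeroth-order pieces combine with total coefficient of $K^\alpha\lambda_1^2$ equal to $-(f_{11}+f)+2f+2f_{11}=f_{11}+f$, which is nonnegative because $\mathcal{W}$ is uniformly convex, i.e. $f_{k\ell}+fg_{k\ell}$ — the curvature-frame form of $\mathfrak{r}_{ij}=\bar\nabla^2_{ij}f+f\bar g_{ij}$ — is positive definite. Discarding the resulting nonnegative term $K^\alpha(f_{11}+f)\lambda_1^2$ only weakens the bound, and substituting $\nabla_1K^\alpha=\alpha K^\alpha b^{rs}h_{rs,1}$ turns $2\lambda_1 f_1\nabla_1K^\alpha$ into $2\alpha K^\alpha b^{rs}h_{rs,1}f_1\lambda_1$; what is left is precisely \eqref{s4-eq0}.

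The only genuinely delicate point is the bookkeeping of the anisotropic contributions coming from \eqref{metric1} and \eqref{2nd fundamental form1}: one has to check that multiplying $\partial_t g_{11}$ by $-\lambda_1$ and adding the $-K^\alpha(f_{11}+f)\lambda_1^2$ from \eqref{2nd fundamental form1} makes the ``bad'' zeroth-order $f$-terms recombine into a nonnegative multiple of $\mathfrak{r}_{11}\lambda_1^2$, which can be thrown away, leaving only the single first-order term $2\alpha K^\alpha b^{rs}h_{rs,1}f_1\lambda_1$ (to be absorbed later together with the concave gradient quadratic). Everything else is the standard Simons-type computation already packaged in the evolution equation \eqref{2nd fundamental form1}, so no additional argument is required.
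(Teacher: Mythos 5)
Your proposal is correct and follows essentially the same route as the paper: combine the inequalities of Lemma \ref{approx} with the evolution equations \eqref{metric1} and \eqref{2nd fundamental form1} in the chart $g_{ij}=\delta_{ij}$, $h_{ij}=\lambda_i\delta_{ij}$, observe that the zeroth-order anisotropic terms recombine into $K^{\alpha}(f_{11}+f)\lambda_1^2\geq 0$ (positive since $\bar{\nabla}^2f+fg_{\mathbb{S}^n}>0$), and discard it after rewriting $2\lambda_1 f_1\nabla_1K^{\alpha}=2\alpha K^{\alpha}b^{rs}h_{rs,1}f_1\lambda_1$. The bookkeeping you performed matches the paper's computation exactly, so no further comment is needed.
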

\proof

Under the flow (\ref{flow1}), from the inequalities \eqref{approx-eq} and \eqref{approx-eq1}, we have
\begin{align}\label{s4-eq1}
		\partial_t\phi-\mathcal{L}\phi\geq &\partial_th_{11}-\mathcal{L}h_{11}-\partial_tg_{11}\lambda_1+2\alpha fK^{\alpha}\sum_{i}\sum_{\ell>\mu}\frac{(\nabla_ih_{1\ell})^2}{\lambda_i(\lambda_{\ell}-\lambda_1)}.
\end{align}
Substituting \eqref{metric1} and \eqref{2nd fundamental form1} into \eqref{s4-eq1} we get
\begin{align}
	\partial_t\phi-\mathcal{L}\phi\geq &\alpha fK^{\alpha}\left(\alpha b^{k\ell}b^{mn}-b^{km}b^{\ell n}\right)\nabla_1h_{mn}\nabla_1h_{k\ell}
	+\alpha fK^{\alpha}H\lambda_1\nonumber\\
	& -n\alpha fK^{\alpha}\lambda_1^2+K^{\alpha}(f_{11}+f)\lambda_1^2+2\alpha K^{\alpha}b^{rs}h_{rs,1}f_1\lambda_1\\
	&+2\alpha fK^{\alpha}\sum_{i}\sum_{\ell>\mu}\frac{(\nabla_ih_{1\ell})^2}{\lambda_i(\lambda_{\ell}-\lambda_1)}.\nonumber
\end{align}
Since the Wulff shape $\mathcal{W}$ is convex, we have $\bar{\nabla}^2f+fg_{\mathbb{S}^n}$ is uniformly positive definite. In particular, $f_{11}+f>0$ holds at the point $(p_0,t_0)$. This implies the inequality \eqref{s4-eq0}.
\endproof

To estimate the local lower bound of the smallest principal curvature, we multiply $\lambda_{\min}$ by suitable cut-off function and apply the maximum principle. The following is the main estimate of this section.
  \begin{theorem}\label{principle curvature}
    Let $\Sigma_t$ be a complete strictly convex smooth graph solution of (\ref{flow1}) defined on $M^n\times[0,T]$, for some $T>0$. Given
  constants $\beta>0$ and $N\geq \beta$,
  \begin{eqnarray}\label{s5.t1}
    (\psi_{\beta}^{-n(1+\frac{1}{\alpha})}\lambda_{\min})(p,t)\geq \min\left\{N^{-n(1+\frac{1}{\alpha})}\inf_{\mathcal{Q}_N}\{\lambda_{\min}(p,0)\}, 1/C_1(N,f,n,\alpha,\beta)\right\},
  \end{eqnarray}
  where $\psi_\beta$ is the cut-off function defined in \eqref{s4.cutf}, $\mathcal{Q}_N=\{p\in M^n:\bar{u}(p,0)<N\}$ and $C_1(N,f,n,\alpha,\beta)$ is the constant in \eqref{s4.w2} which depends only on $N,f,n,\alpha$ and $\beta$.
  \end{theorem}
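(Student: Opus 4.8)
The plan is to run a maximum principle argument on the quantity $W := \psi_\beta^{-n(1+1/\alpha)}\lambda_{\min}$, using the smooth approximation $\phi$ for $\lambda_{\min}$ from Lemma \ref{approx} so that all the derivative inequalities \eqref{approx-eq}--\eqref{approx-eq1} are available, and using the evolution inequality \eqref{s4-eq0} together with the cut-off evolution \eqref{cut-off fun1}. First I would set $Q := \psi_\beta^{-p}\phi$ with $p = n(1+\frac{1}{\alpha})$; since $\psi_\beta$ is compactly supported (Theorem \ref{cpt sup}) and $\phi \le \lambda_1$, for fixed $T$ the function $\psi_\beta^{-p}\lambda_{\min}$ attains its minimum on $M^n\times[0,T]$ at some $(p_0,t_0)$, and at that point, after replacing $\lambda_{\min}$ by $\phi$ near $(p_0,t_0)$, we may assume the minimum of $Q$ is attained there. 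If $t_0 = 0$ we are done with the first alternative in \eqref{s5.t1} using $\psi_\beta(p,0)\le N$. So assume $t_0 > 0$ and work at $(p_0,t_0)$ in a chart with $g_{ij}=\delta_{ij}$, $h_{ij}=\delta_{ij}\lambda_i$.

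The computational heart is to expand $\partial_t Q - \mathcal{L} Q$ using the product/quotient rule for $\psi_\beta^{-p}\phi$ and plug in \eqref{cut-off fun1} for the $\psi_\beta$ terms and \eqref{s4-eq0} for the $\phi$ terms. At a spatial minimum of $Q$ we have $\nabla_i Q = 0$, i.e. $\nabla_i\phi = p\,\psi_\beta^{-1}\phi\,\nabla_i\psi_\beta$, which lets me rewrite the gradient-of-$\phi$ terms (in particular the awkward $2\alpha K^\alpha b^{rs}h_{rs,1}f_1\lambda_1$ term, noting $b^{rs}h_{rs,1}=\sum_i \lambda_i^{-1}\nabla_ih_{ii}$ relates to $\nabla K^\alpha$) in terms of $\nabla\psi_\beta = -\nabla\bar u$, which is controlled since $|\nabla\bar u|\le 1$. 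The key structural facts I will exploit are: the good negative term $-\alpha fK^\alpha H\psi_\beta\upsilon^{-1}\cdot(\text{something})$ coming from $\mathcal{L}\psi_\beta^{-p}$ (the $-p\,\psi_\beta^{-1}$ derivative hitting $\mathcal{L}\psi_\beta = n\alpha fK^\alpha\langle\nu,e_{n+1}\rangle = -n\alpha fK^\alpha\upsilon^{-1}$ produces a term of favorable sign after accounting for $\psi_\beta^{-p-1}$, and the second-derivative-of-$\psi_\beta^{-p}$ term gives an extra $\|\nabla\psi_\beta\|_{\mathcal L}^2$ of favorable sign); the gradient terms $\alpha fK^\alpha(\alpha b^{k\ell}b^{mn}-b^{km}b^{\ell n})\nabla_1 h_{mn}\nabla_1 h_{k\ell} + 2\alpha fK^\alpha\sum_i\sum_{\ell>\mu}\frac{(\nabla_ih_{1\ell})^2}{\lambda_i(\lambda_\ell-\lambda_1)}$ from \eqref{s4-eq0}, which one shows is $\ge -C\|\nabla\psi_\beta\|^2_{\mathcal L}\cdot(\text{stuff})$ or can be dropped, as in \cite{choi2019evolution, choi2023LpMinkowski}; and the leading bad term $-n\alpha fK^\alpha\lambda_1^2$ balanced against $+\alpha fK^\alpha H\lambda_1 \ge n\alpha fK^\alpha\lambda_1 K^{1/n}$ (since $H\ge nK^{1/n}$) — but this needs the speed $fK^\alpha$ and the gradient function controlled, which is precisely where $\upsilon\le \psi_\beta^{-1}N\cdot(\text{const})$ from Theorem \ref{Gradient estimate} and an upper speed bound (or Young's inequality in $K$) feed in.

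Concretely, at $(p_0,t_0)$ the inequality $0 \ge \partial_t Q - \mathcal{L} Q$ should reduce, after discarding manifestly good terms and using $\psi_\beta\le N$, $\beta\le N$, $\upsilon\ge 1$, $H\ge nK^{1/n}$, the bounds on $f$ and on $\bar\nabla^2 f+fg$, to an inequality of the form $\lambda_1 \le C_1(N,f,n,\alpha,\beta)\,\psi_\beta^{p}$ at that point — where the exponent $p=n(1+1/\alpha)$ is chosen exactly so that the powers of $K$ and $\psi_\beta$ match up when one uses Young's inequality to absorb $K^{-\alpha}$-type and $K^{1/n}$-type terms against a constant (this is the same power-counting miracle as in the closed/graphical isotropic case). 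That gives $Q(p_0,t_0) = \psi_\beta^{-p}\lambda_1 \le C_1$, i.e. $\psi_\beta^{-p}\lambda_{\min}\ge$ its minimum value which is then $\ge \min\{N^{-p}\inf_{\mathcal Q_N}\lambda_{\min}(\cdot,0), 1/C_1\}$, as claimed.

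The main obstacle I expect is the careful bookkeeping of the cross terms produced by the anisotropy: the term $2\alpha K^\alpha b^{rs}h_{rs,1}f_1\lambda_1$ in \eqref{s4-eq0} has no sign and couples first derivatives of $h$ with the anisotropic datum $f_1$, so at the critical point I must use $\nabla_1\phi = p\psi_\beta^{-1}\phi\nabla_1\psi_\beta$ and the Cauchy--Schwarz/absorption against the good gradient terms $\|\nabla\psi_\beta\|^2_{\mathcal L}$ and the second-fundamental-form gradient quadratic in \eqref{s4-eq0}, all while keeping every constant dependent only on $N,f,n,\alpha,\beta$ and not on the solution. Ensuring $f_{11}+f>0$ is harmless (uniform convexity of $\mathcal W$), but it only kills one term; the genuinely delicate point is that the extra anisotropic gradient term must be shown not to destroy the sign of the combined gradient expression, for which the concavity structure $\alpha b^{k\ell}b^{mn}-b^{km}b^{\ell n}$ (nonnegative when $\alpha\ge \frac1n$, and handled with the $\frac{(\nabla_i h_{1\ell})^2}{\lambda_\ell-\lambda_1}$ reserve term when $\alpha<\frac1n$, exactly as in \cite{choi2019evolution}) does the work.
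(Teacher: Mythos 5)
Your overall framework is the same as the paper's: you run the maximum principle on the reciprocal quantity $\psi_\beta^{\gamma}\lambda_{\min}^{-1}$ with $\gamma=n(1+\tfrac1\alpha)$, replace $\lambda_{\min}$ by the smooth barrier $\phi$ of Lemma \ref{approx}, use \eqref{s4-eq0} and \eqref{cut-off fun1}, exploit the critical-point identity $\gamma\psi_\beta^{-1}\nabla\psi_\beta=\phi^{-1}\nabla h_{11}$, and absorb the anisotropic cross term $2\alpha K^\alpha b^{rs}h_{rs,1}f_1\lambda_1$ by Cauchy--Schwarz into the available negative gradient quadratics (in the paper this is Step 2, with the specific choice $\epsilon=\tfrac{\alpha}{1+\alpha}$ tied to $\gamma=n/\epsilon$). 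Up to this point the proposal is essentially the paper's argument, if loosely stated.

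There is, however, a genuine gap in your treatment of the zero-order terms, which is where the estimate is actually won. You propose to balance the bad term $-n\alpha fK^\alpha\lambda_1^2$ against $\alpha fK^\alpha H\lambda_1$ using $H\geq nK^{1/n}$, "feeding in" the gradient estimate and an upper speed bound. First, invoking the speed bound is circular: the constant in Theorem \ref{speed estimate} depends on $\Lambda=\sup\lambda_{\min}^{-1}$, i.e.\ on exactly the quantity you are trying to bound. Second, and more fundamentally, the inequality $H\geq nK^{1/n}$ cannot produce a lower bound on $\lambda_{\min}$: after dividing the critical-point inequality by $fK^\alpha$, the only good terms are $\alpha H$ and $\beta\gamma\psi_\beta^{-1}f^{-1}K^{-\alpha}$, and a Young combination of $K^{1/n}$ with $K^{-\alpha}$ eliminates $K$ but leaves no negative power of $\lambda_1$ at all, so the resulting inequality is consistent with $\lambda_1\to 0$ and yields no bound on $\omega=\psi_\beta^\gamma\lambda_1^{-1}$. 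The paper's key step, which your proposal does not contain, is to discard $\lambda_1$ from the sum and use
\begin{equation*}
H\;\geq\;\sum_{i=2}^n\lambda_i\;\geq\;(n-1)\Bigl(\tfrac{K}{\lambda_1}\Bigr)^{\frac{1}{n-1}},
\end{equation*}
so that the Young inequality between $K^{\frac{1}{n-1}}\lambda_1^{-\frac{1}{n-1}}$ and $\psi_\beta^{-1}K^{-\alpha}$ produces the factor $\lambda_1^{-\frac{\alpha}{1+(n-1)\alpha}}$ that, after the power bookkeeping with $\psi_\beta\leq N$, bounds $\omega_0$ by $C_1(N,f,n,\alpha,\beta)$. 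Relatedly, your reliance on Theorem \ref{Gradient estimate} is both unnecessary (the paper only uses $\upsilon\geq 1$ to throw away the term $\gamma n\alpha fK^\alpha\upsilon^{-1}\psi_\beta^{-1}$ in the favorable direction) and harmful to the statement: it would make $C_1$ depend on $\sup_{\mathcal{Q}_N}\upsilon(\cdot,0)$, whereas the theorem asserts dependence on $N,f,n,\alpha,\beta$ only.
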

  \begin{proof}
      Denote the auxiliary function
       \begin{equation*}
         \omega:=\psi_{\beta}^{\gamma}\frac{1}{\lambda_{\min}},
       \end{equation*}
       where $\gamma=n(1+\frac{1}{\alpha})>0$ is a positive constant. Since $\psi_{\beta}$ is compactly supported, for a fixed $T\in(0,+\infty)$, the function $\omega$ attains its maximum on $M^n\times[0,T]$
    at a point $(p_0,t_0)$. If $t_0=0$, by $\psi_\beta\leq N$ we have
 \begin{align}\label{s4.w1}
  \omega(p,t)\leq & \omega(p_0,0)\leq N^{n(1+\frac{1}{\alpha})}\sup_{\mathcal{Q}_N}\{\lambda^{-1}_{\min}(p,0)\},
 \end{align}
  where $\mathcal{Q}_N=\{p\in M^n:\bar{u}(p,0)<N\}$. So without loss of generality we assume $t_0>0$ in the following.

To estimate $\omega_0:=\omega(p_0,t_0)$, we define
  \begin{align}\label{s4-eq2}
  	\phi=\psi_{\beta}^{\gamma}\omega_0^{-1}.
  \end{align}
Then, observing $\omega\leq\omega_0$ we have
\begin{equation*}
  \phi\leq\lambda_{min}\quad  \mathrm{on} \quad \Sigma_t\cap \mathrm{supp}(\psi_\beta)
\end{equation*}
for $t\in[0,t_0]$ as well as $\phi(p_0,t_0)=\lambda_{\min}(p_0,t_0)$. Hence, we can apply the estimates \eqref{approx-eq}, \eqref{approx-eq1} and \eqref{s4-eq0} to the smooth function $\phi$, as they are calculated at the maximum point. We choose a chart satisfies $g_{ij}=\delta_{ij}$, $h_{ij}=\delta_{ij}\lambda_i$ at $(p_0,t_0)$, where $\lambda_1=\cdots=\lambda_{\mu}<\lambda_{\mu+1}\leq\cdots\leq\lambda_n$.

   \textbf{Step 1. Calculate the evolution equation of $\psi_{\beta}^{\gamma}\phi^{-1}$}.
    First, we note that on the support of $\psi_\beta$,
   \begin{align*}
     \partial_t\ln(\psi_{\beta}^{\gamma}\phi^{-1}) =&\gamma\psi_{\beta}^{-1} \partial_t\psi_{\beta}-\phi^{-1}\partial_t  \phi,\\
     \nabla_i\ln(\psi_{\beta}^{\gamma}\phi^{-1}) =&\gamma\psi_{\beta}^{-1}\nabla_i\psi_{\beta}-\phi^{-1}\nabla_i\phi,\\
      \mathcal{L}\ln(\psi_{\beta}^{\gamma}\phi^{-1})=&\alpha fK^{\alpha}b^{ij}\nabla_i\left(\gamma\psi_{\beta}^{-1}\nabla_j\psi_{\beta}-\phi^{-1}\nabla_j\phi\right)\\
      =&\gamma\psi_{\beta}^{-1}\mathcal{L}\psi_{\beta}-\phi^{-1}\mathcal{L}\phi-\gamma\psi_{\beta}^{-2}\Vert\nabla\psi_{\beta}\Vert^2_{\mathcal{L}}
   +\phi^{-2}\Vert\nabla \phi\Vert^2_{\mathcal{L}}.
   \end{align*}
 At the point $(p_0,t_0)$,  combining these equations with \eqref{s4-eq0} and \eqref{cut-off fun1} gives that
 \begin{align}\label{s5.2}
 (\partial_t-\mathcal{L})\ln(\psi_{\beta}^{\gamma}\phi^{-1})=&\gamma\psi_{\beta}^{-1}\left(\partial_t\psi_\beta-\mathcal{L}\psi_\beta\right)-\phi^{-1}\left(\partial_t  \phi-\mathcal{L}\phi\right)\nonumber\\
  &\quad +\gamma\psi_{\beta}^{-2}\Vert\nabla\psi_{\beta}\Vert^2_{\mathcal{L}}
   -\phi^{-2}\Vert\nabla\phi\Vert^2_{\mathcal{L}}\nonumber\\
  \leq &\gamma\psi_{\beta}^{-2}\Vert\nabla\psi_{\beta}\Vert^2_{\mathcal{L}}
  -\phi^{-2}\Vert\nabla \phi\Vert^2_{\mathcal{L}}+\gamma n\alpha fK^{\alpha}\upsilon^{-1}\psi_{\beta}^{-1}\nonumber\\
  &-\beta\gamma\psi_{\beta}^{-1}+n\alpha fK^{\alpha}\lambda_1-2\alpha K^{\alpha}b^{rs}h_{rs,1}f_1\nonumber\\
  &-\alpha fK^{\alpha}H-2\alpha fK^{\alpha}\sum_{i}\sum_{\ell>\mu}\frac{(\nabla_ih_{1\ell})^2}{\lambda_1\lambda_i(\lambda_{\ell}-\lambda_1)}\nonumber\\
  &-\alpha fK^{\alpha}\lambda_1^{-1}\left(\alpha b^{rs}b^{mn}-b^{rm}b^{sn}\right)\nabla_1h_{mn}\nabla_1h_{rs}.
 \end{align}
By the definition \eqref{s4-eq2}, we have $\ln(\psi_{\beta}^{\gamma}\phi^{-1})=\ln\omega_0$, then
\begin{align}\label{s4-eq3}
	\nabla\ln(\psi_{\beta}^{\gamma}\phi^{-1})=0, \text{and}\  (\partial_t-\mathcal{L})\ln(\psi_{\beta}^{\gamma}\phi^{-1})=0.
\end{align}
Hence for any $i=1,\cdots,n$,
 \begin{eqnarray*}
  \gamma\psi_{\beta}^{-1}\nabla_i\psi_{\beta}=\phi^{-1}\nabla_i\phi=\phi^{-1}\nabla_ih_{11},
 \end{eqnarray*}
 where in the second identity we used \eqref{approx-eq0}.
Then the first two terms on the third line of \eqref{s5.2} satisfy
 \begin{align}\label{s5.3}
\gamma\psi_{\beta}^{-2}\Vert\nabla\psi_{\beta}\Vert^2_{\mathcal{L}}
  -\phi^{-2}\Vert\nabla \phi\Vert^2_{\mathcal{L}}=&(\gamma^{-1}-1)\phi^{-2}\Vert\nabla h_{11}\Vert^2_{\mathcal{L}}\nonumber\\
  =&\alpha(\gamma^{-1}-1)\lambda_1^{-2}fK^{\alpha}\sum_ib^{ii}\vert\nabla_ih_{11}\vert^2.
 \end{align}
 Using the Cauchy-Schwarz inequality, we can estimate the third term of the fourth line of \eqref{s5.2} as
 \begin{align}\label{s5.4}
  -2\alpha K^{\alpha}b^{rs}h_{rs,1}f_1\leq  &  \epsilon^{-1}f^{-1}\vert f_1\vert^2K^{\alpha}\lambda_1+\alpha^2\epsilon fK^{\alpha}\lambda_1^{-1}\vert b^{rs}h_{rs,1}\vert^2,
 \end{align}
 where $\epsilon>0$ is a small constant to be determined later. Note that in the isotropic case this term vanishes because $f$ is a constant.  Substituting \eqref{s4-eq3}, \eqref{s5.3} and \eqref{s5.4} into \eqref{s5.2} implies that
 \begin{align}\label{s5.5}
0\leq&\gamma n\alpha fK^{\alpha}\upsilon^{-1}\psi_{\beta}^{-1}-\beta\gamma\psi_{\beta}^{-1}+n\alpha fK^{\alpha}\lambda_1-\alpha fK^{\alpha}H\nonumber\\
  & +\epsilon^{-1}f^{-1}\vert f_1\vert^2K^{\alpha}\lambda_1+\alpha(\gamma^{-1}-1)\lambda_1^{-2}fK^{\alpha}\sum_ib^{ii}\vert\nabla_ih_{11}\vert^2\nonumber\\
  & +\alpha^2\epsilon fK^{\alpha}\lambda_1^{-1}\vert b^{rs}h_{rs,1}\vert^2-\alpha fK^{\alpha}\lambda_1^{-1}\left(\alpha b^{rs}b^{mn}-b^{rm}b^{sn}\right)\nabla_1h_{mn}\nabla_1h_{rs}\nonumber\\
  &-2\alpha fK^{\alpha}\sum_{i}\sum_{\ell>\mu}\frac{(\nabla_ih_{1\ell})^2}{\lambda_1\lambda_i(\lambda_{\ell}-\lambda_1)}\nonumber\\
  &=:Q_0+Q_1,
 \end{align}
where $Q_0$ denotes the zero order terms on the right hand side of \eqref{s5.5} and $Q_1$ denotes the last four terms of \eqref{s5.5} which involve the derivatives of the second fundamental forms.

\textbf{Step 2. Estimate the $Q_1$ terms}. To estimate $\omega_0$ using  \eqref{s5.5}, we need to deal with the derivative terms $Q_1$.
 \begin{eqnarray}\label{s5.6}
Q_1&=&\alpha(\gamma^{-1}-1)\lambda_1^{-2}fK^{\alpha}\sum_ib^{ii}\vert\nabla_ih_{11}\vert^2+\alpha^2\epsilon fK^{\alpha}\lambda_1^{-1}\vert b^{rs}h_{rs,1}\vert^2\nonumber\\
  &&-\alpha fK^{\alpha}\lambda_1^{-1}\left(\alpha b^{rs}b^{mn}-b^{rm}b^{sn}\right)\nabla_1h_{mn}\nabla_1h_{rs}-2\alpha fK^{\alpha}\sum_{i}\sum_{\ell>\mu}\frac{(\nabla_ih_{1\ell})^2}{\lambda_1\lambda_i(\lambda_{\ell}-\lambda_1)}\nonumber\\
  &=&\alpha(\gamma^{-1}-1)\lambda_1^{-2}fK^{\alpha}\sum_i\frac{(h_{11,i})^2}{\lambda_i}-\alpha^2(1-\epsilon) fK^{\alpha}\lambda_1^{-1}\sum_{i,j}\frac{h_{ii,1}h_{jj,1}}{\lambda_i\lambda_j}
 \nonumber\\
  &&+\alpha fK^{\alpha}\lambda_1^{-1}\sum_{i,j}\frac{(h_{ij,1})^2}{\lambda_i\lambda_j}-2\alpha fK^{\alpha}\sum_{i}\sum_{\ell>\mu}\frac{(h_{1\ell,i})^2}{\lambda_1\lambda_i(\lambda_{\ell}-\lambda_1)}.
 \end{eqnarray}
By Lemma \ref{approx} and Codazzi equation, we get $\nabla_1h_{ij}=0$ for $1<j\leq\mu$, which implies
\begin{align*}
	\alpha fK^{\alpha}\lambda_1^{-1}\sum_{i,j}\frac{(h_{ij,1})^2}{\lambda_i\lambda_j}=&\alpha fK^{\alpha}\lambda_1^{-3}(h_{11,1})^2+2\alpha fK^{\alpha}\lambda_1^{-1}\sum_{j>\mu}\frac{(h_{11,j})^2}{\lambda_1\lambda_j}\nonumber\\
	&+\alpha fK^{\alpha}\lambda_1^{-1}\sum_{i,j>\mu}\frac{(h_{ij,1})^2}{\lambda_i\lambda_j}
\end{align*}
and
\begin{align*}
	2\alpha fK^{\alpha}\sum_{i}\sum_{\ell>\mu}\frac{(h_{1\ell,i})^2}{\lambda_1\lambda_i(\lambda_{\ell}-\lambda_1)}=&2\alpha fK^{\alpha}\sum_{\ell>\mu}\frac{(h_{11,\ell})^2}{\lambda_1^2(\lambda_{\ell}-\lambda_1)}+2\alpha fK^{\alpha}\sum_{k,\ell>\mu}\frac{(h_{k\ell,1})^2}{\lambda_1\lambda_k(\lambda_{\ell}-\lambda_1)}.
\end{align*}
Therefore,
\begin{align}\label{s4-eq6}
	&\alpha fK^{\alpha}\lambda_1^{-1}\sum_{i,j}\frac{(h_{ij,1})^2}{\lambda_i\lambda_j}-	2\alpha fK^{\alpha}\sum_{i}\sum_{\ell>\mu}\frac{(h_{1\ell,i})^2}{\lambda_1\lambda_i(\lambda_{\ell}-\lambda_1)}\nonumber\\
	\leq&\alpha fK^{\alpha}\lambda_1^{-3}(h_{11,1})^2-2\alpha fK^{\alpha}\sum_{\ell>\mu}\frac{(h_{11,\ell})^2}{\lambda_1\lambda_{\ell}(\lambda_{\ell}-\lambda_1)}.
\end{align}
Substituting \eqref{s4-eq6} into \eqref{s5.6}, and taking $\epsilon=\frac{\alpha}{1+\alpha}$ we have
 \begin{eqnarray*}
  Q_1  &\leq&\alpha(\gamma^{-1}-1)\lambda_1^{-2}fK^{\alpha}\sum_i\frac{(h_{11,i})^2}{\lambda_i}-\alpha^2(1-\epsilon) fK^{\alpha}\lambda_1^{-1}\sum_{i,j}\frac{h_{ii,1}h_{jj,1}}{\lambda_i\lambda_j}
  \nonumber\\
  &&+\alpha fK^{\alpha}\lambda_1^{-3}(h_{11,1})^2-2\alpha fK^{\alpha}\sum_{\ell>\mu}\frac{(h_{11,\ell})^2}{\lambda_1\lambda_{\ell}(\lambda_{\ell}-\lambda_1)}\\
  &\leq&\alpha(\gamma^{-1}-\alpha(1-\epsilon))fK^{\alpha}\lambda_1^{-3}(h_{11,1})^2\leq0.
 \end{eqnarray*}
 where we used that  $\gamma=n(1+\frac{1}{\alpha})=\frac{n}{\epsilon}>1$.

\textbf{Step 3. Estimate the $Q_0$ terms}. Return to \eqref{s5.5}, note that we have chosen $\epsilon=\frac{\alpha}{1+\alpha}$ and $\gamma=n(1+\alpha)/\alpha$ in \textbf{Step 2}.  We obtain that at the maximum point $(p_0,t_0)$
 \begin{align*}
  0\leq &n^2(1+\alpha) fK^{\alpha}\upsilon^{-1}\psi_{\beta}^{-1}-n\beta(1+\alpha)\alpha^{-1}\psi_{\beta}^{-1}+n\alpha fK^{\alpha}\lambda_1\nonumber\\
  & -\alpha fK^{\alpha}H+\frac{1+\alpha}{\alpha}f^{-1}\vert f_1\vert^2K^{\alpha}\lambda_1.
 \end{align*}
Equivalently,
 \begin{eqnarray*}
  n^2(1+\alpha) fK^{\alpha}\upsilon^{-1}\psi_{\beta}^{-1}
  \geq n\beta(1+\alpha)\alpha^{-1}\psi_{\beta}^{-1}+\alpha fK^{\alpha}H-\left(n\alpha +\frac{1+\alpha}{\alpha}f^{-2}\vert f_1\vert^2\right)fK^{\alpha}\lambda_{1}
 \end{eqnarray*}
 holds at  $(p_0,t_0)$. Multiplying $f^{-1}K^{-\alpha}$ and using $\upsilon>1$ yields
 \begin{eqnarray}\label{s4.q0-1}
   n^2(1+\alpha) \psi_{\beta}^{-1}\geq \alpha H+n\beta(1+\alpha)\alpha^{-1}\psi_{\beta}^{-1} f^{-1}K^{-\alpha}-\left(n\alpha +\frac{1+\alpha}{\alpha}f^{-2}\vert f_1\vert^2\right)\lambda_{1}.
 \end{eqnarray}
Observe that
\begin{equation*}
  H=\sum_{i=1}^n\lambda_i\geq \sum_{i=2}^n\lambda_i\geq (n-1)\left(\prod_{i=2}^n\lambda_i\right)^{\frac{1}{n-1}}=(n-1)K^{\frac{1}{n-1}}\lambda_{1}^{-\frac{1}{n-1}}.
\end{equation*}
We can estimate the first two terms on the right hand side of \eqref{s4.q0-1} using the Young's inequality:
 \begin{align*}
  \alpha H+n\beta(1+\alpha)\alpha^{-1}\psi_{\beta}^{-1} f^{-1}K^{-\alpha}\geq & \alpha(n-1)K^{\frac{1}{n-1}}\lambda_{1}^{-\frac{1}{n-1}}+{n\beta(1+\alpha)}{\alpha}^{-1} \psi_{\beta}^{-1}(\sup_{\mathbb{S}^n}f)^{-1}K^{-\alpha}\nonumber\\
  \geq &\frac{\alpha(n-1)}{1+(n-1)\alpha}K^{\frac{1}{n-1}}\lambda_{1}^{-\frac{1}{n-1}}+\frac{n\beta}{(1+(n-1)\alpha)} \psi_{\beta}^{-1}(\sup_{\mathbb{S}^n}f)^{-1}K^{-\alpha}\nonumber\\
  \geq &\left({n\beta}\psi_{\beta}^{-1}(\sup_{\mathbb{S}^n}f)^{-1}\right)^{\frac{1}{1+(n-1)\alpha}}\lambda_{1}^{-\frac{\alpha}{1+(n-1)\alpha}}.
 \end{align*}
So using $\psi_\beta\leq N$ we obtain from \eqref{s4.q0-1} that
  \begin{align*}  	
 \omega_0^{\frac{\alpha}{1+(n-1)\alpha}}=& \left(\psi_{\beta}^{n(1+\frac{1}{\alpha})}\lambda_{1}^{-1}\right)^{\frac{\alpha}{1+(n-1)\alpha}}\\
 \leq & \left(\frac{\sup_{\mathbb{S}^n}f}{n\beta}\right)^{\frac{1}{1+(n-1)\alpha}}\psi_{\beta}^{\frac{n+\alpha}{1+(n-1)\alpha}}\bigg(n^2(1+\alpha)\nonumber\\
  &\quad +\left(n\alpha+\frac{1+\alpha}{\alpha}\sup_{\mathbb{S}^n}|\bar{\nabla}\log f|^2\right)\psi_{\beta}\lambda_{1}\bigg)\\
  \leq &   \left(\frac{\sup_{\mathbb{S}^n}f}{n\beta}\right)^{\frac{1}{1+(n-1)\alpha}}N^{\frac{n+\alpha}{1+(n-1)\alpha}}\bigg(n^2(1+\alpha)\nonumber\\
  &\quad +\left(n\alpha+\frac{1+\alpha}{\alpha}\sup_{\mathbb{S}^n}|\bar{\nabla}\log f|^2\right)N^{1+n(1+\frac{1}{\alpha})}\psi_{\beta}^{-n(1+\frac{1}{\alpha})}\lambda_{1}\bigg),
 \end{align*}
which implies an upper bound of $\psi_{\beta}^{n(1+\frac{1}{\alpha})}\lambda_{\min}^{-1}$ by a constant depending on $n,\beta,N,\alpha$ and $f$. In fact, a precise estimate can be of the following form
\begin{align}\label{s4.w2}  	
\omega_0
  \leq &  \max\left\{ \left(\frac{\sup_{\mathbb{S}^n}f}{n\beta}\right)^{\frac{1}{\alpha}},1\right\}N^{\frac{n+\alpha}{\alpha}}\bigg(n^2(1+\alpha)\nonumber\\
  &\quad +\left(n\alpha+\frac{1+\alpha}{\alpha}\sup_{\mathbb{S}^n}|\bar{\nabla}\log f|^2\right)N^{n}\bigg)^{\frac{1+(n-1)\alpha}{\alpha}}\nonumber\\
  =:&C(N,f,n,\alpha, \beta)
 \end{align}
 for a constant $C(N,f,n,\alpha, \beta)$ depending on $N$, $f$, $n$,$\alpha$ and $\beta$. The estimate \eqref{s5.t1} follows from combining \eqref{s4.w1} and \eqref{s4.w2}.
  \end{proof}

 \section{Local speed estimate}\label{sec4}
 In this section, we estimate the local upper bound of the speed function $fK^{\alpha}$. Denote the cut-off function
\begin{equation}\label{s6.1}
\psi(p,t)=\psi_0(p,t)=(N-\bar{u}(p,t))_+,
\end{equation}
where $\bar{u}(p,t)=\langle F(p,t),e_{n+1}\rangle$ is the height function. We prove the following estimate.
 \begin{theorem}\label{speed estimate}
  Let $\Sigma_t$ be a complete strictly convex smooth graph solution of (\ref{flow1}) defined on $M^n\times[0,T]$, for some $T>0$. Then, given a
  constant  $N>0$,
  \begin{align}\label{s6.t1}
    \frac{t}{1+t}K^{\frac{1}{n}}\psi^{2}(p,t)\leq&~ (2\theta)^{1+\frac{1}{2n\alpha}} \max\{(\sup_{\mathbb{S}^n} f)^{\frac{1}{n\alpha}},1\} (\min_{\mathbb{S}^n}f)^{-\frac{1}{n\alpha}}\nonumber\\
  &\quad \times \bigg(N^{2}+2n\alpha \Big(N+\Lambda(4n\alpha+1+4n\alpha\theta)\Big)\bigg)
  \end{align}
for any $(p,t)\in M^n\times [0,T]$,  where  $\theta$ and $\Lambda$ are constants given by
  \begin{equation*}
    \theta=\sup\{\upsilon^2(p,s):\bar{u}(p,t)<N,s\in[0,t]\},
   \end{equation*}
   \begin{equation*}
    \Lambda=\sup\{\lambda^{-1}_{\min}(p,s):\bar{u}(p,t)<N,s\in[0,t]\}.
   \end{equation*}
 \end{theorem}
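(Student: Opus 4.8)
The plan is to run the same auxiliary-function-plus-maximum-principle argument used for the gradient and curvature estimates, now applied to a quantity of the form $W = \dfrac{t}{1+t}\, fK^{\alpha}\,\psi^{q}\,\upsilon^{-r}$ for suitably chosen exponents $q,r>0$ (I expect $q$ related to $2n\alpha$ and $r$ related to some power that kills the bad $\nabla\upsilon$ cross-terms). The factor $\frac{t}{1+t}$ vanishes at $t=0$, so there is no contribution from the initial data and the whole burden falls on the interior maximum point $(p_0,t_0)$ with $t_0>0$. The evolution equation \eqref{speed function} gives $\partial_t(fK^{\alpha}) = \mathcal{L}(fK^{\alpha}) + \alpha f^2 K^{2\alpha}H$; combining this with \eqref{gradient function} for $\upsilon$ and \eqref{cut-off fun1} for $\psi$, together with the product/quotient rules for $\mathcal{L}$ (as already assembled in the proof of Theorem \ref{principle curvature}), I would compute $(\partial_t-\mathcal{L})\ln W$ at $(p_0,t_0)$. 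The key terms to track are: the good negative term $-\alpha f^{\,?}K^{\,?}H\,\psi^q\upsilon^{-r}$-type contributions coming from the $H$ in \eqref{speed function} with the wrong sign for us, so actually one must be careful — the speed's own reaction term $+\alpha f^2K^{2\alpha}H$ is \emph{positive}, which is why we divide by $\upsilon^{r}$: the term $-\alpha fK^{\alpha}H\upsilon$ in \eqref{gradient function} produces, after taking $-r\ln\upsilon$, a good term $+r\alpha fK^{\alpha}H$ that we use to absorb the bad $H$-term from the speed once the exponents are balanced (here the constant $\Lambda$ bounding $\lambda_{\min}^{-1}$ enters to control $H$ against $K^{1/n}$, since $H\ge nK^{1/n}$ only bounds $H$ from below, while $H\le (n-1)\lambda_{\max}+\lambda_{\min}$-type estimates or rather $H \le K\cdot(\text{sum of }\lambda^{-1}\text{'s})$-style bounds give the needed upper control).

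More precisely, the second step is the \emph{gradient term bookkeeping}: the cross term $-2r\upsilon^{-1}\|\nabla\upsilon\|^2_{\mathcal L}$ from \eqref{gradient function}, the term $-2\upsilon^{-1}\langle\nabla(fK^{\alpha}),\nabla\upsilon\rangle_{\mathcal L}$ hidden in $\mathcal L\ln W$, and the $\|\nabla\ln W\|^2_{\mathcal L}$ term that we \emph{gain} at a maximum (since $\nabla\ln W=0$ there, we may substitute $\nabla\ln(fK^{\alpha}) = r\nabla\ln\upsilon - q\nabla\ln\psi$) must be combined by Cauchy–Schwarz so that the net gradient contribution is non-positive — this is where the precise choice of $r$ (and the appearance of $\theta = \sup\upsilon^2$) gets pinned down, exactly mirroring the $\epsilon=\frac{\alpha}{1+\alpha}$, $\gamma=n(1+\frac1\alpha)$ bookkeeping in Step 2 of Theorem \ref{principle curvature}. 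The third step is the \emph{zeroth-order estimate}: after discarding the non-positive gradient terms, at $(p_0,t_0)$ one is left with an inequality of the schematic form
\begin{equation*}
0 \le -\,c_1\, fK^{\alpha}H\,W \;+\; c_2\, fK^{\alpha}\,\lambda_{\min}^{-1}\,\psi^{-1}\upsilon^{-1}\cdots \;+\; \frac{1}{t(1+t)}\,W \;+\; (\text{lower order in }W),
\end{equation*}
into which one feeds $H\ge nK^{1/n}$ (to convert the speed $fK^{\alpha}$ into $fK^{\alpha+1/n}\ge$ something times $W$ times $K^{1/n}$), the bounds $\upsilon^2\le\theta$, $\lambda_{\min}^{-1}\le\Lambda$, $\psi\le N$, $\frac{t}{1+t}\le 1$, and Young's inequality to absorb the lower-order powers of $W$. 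Rearranging yields precisely an upper bound of the claimed shape for $W$, hence for $\frac{t}{1+t}K^{1/n}\psi^2\upsilon^{-r}$, and since $\upsilon\ge 1$ this gives the stated bound on $\frac{t}{1+t}K^{1/n}\psi^2$ (with the constant expressed in terms of $\sup f$, $\min f$, $n$, $\alpha$, $N$, $\theta$, $\Lambda$).

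The main obstacle I anticipate is the \emph{sign management of the $H$-terms together with converting $fK^{\alpha}H$ into a controllable multiple of $K^{1/n}W$}: unlike the closed case where one has two-sided curvature bounds, here we only have the local a priori lower bound $\lambda_{\min}\ge\Lambda^{-1}$ on $\psi$'s support, so upper bounds on $H$ must be routed through $H = K\sum_i\lambda_i^{-1} \le K\cdot n\Lambda^{?}$-type inequalities or through $\lambda_{\max}\le K/\lambda_{\min}^{n-1}$, and the powers have to be arranged so the final exponent on $K^{1/n}$ is exactly $1$ and the exponent $2$ on $\psi$ comes out correctly. Choosing the exponents $q,r$ (and the auxiliary small constant in Young's inequality) so that simultaneously (i) the gradient terms cancel, (ii) the dangerous positive $\alpha f^2K^{2\alpha}H$ term is dominated, and (iii) the leftover lower-order powers of $W$ can be absorbed, is the delicate part; everything else is the now-familiar cut-off maximum principle machinery, and the factor $\frac{t}{1+t}$ is the standard device (as in Tso's estimate) to handle the absence of an initial bound on the speed.
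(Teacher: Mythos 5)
Your overall scaffolding (cut-off $\psi$, time factor $t/(1+t)$, maximum principle applied to a logarithm, $H\ge nK^{1/n}$, the constants $\theta,\Lambda,N$) matches the paper, but the central mechanism you propose for neutralizing the bad reaction term does not work, and this is exactly where the paper does something different. From \eqref{speed function}, $(\partial_t-\mathcal{L})\ln(fK^{\alpha})=\alpha fK^{\alpha}H+\Vert\nabla\ln(fK^{\alpha})\Vert^2_{\mathcal{L}}$, while \eqref{gradient function} gives $(\partial_t-\mathcal{L})\ln\upsilon=-\alpha fK^{\alpha}H-\upsilon^{-2}\Vert\nabla\upsilon\Vert^2_{\mathcal{L}}$. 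Hence dividing by $\upsilon^{r}$, i.e.\ adding $-r\ln\upsilon$ to $\ln W$, contributes $+r\alpha fK^{\alpha}H+r\upsilon^{-2}\Vert\nabla\upsilon\Vert^2_{\mathcal{L}}$: both terms have the bad sign, so your claimed ``good term $+r\alpha fK^{\alpha}H$ that absorbs the bad $H$-term'' is a sign error --- at an interior maximum you need \emph{negative} zeroth-order terms to conclude, and your quantity produces none. If you flip the direction and multiply by $\upsilon^{r}$ (with $r>1$, the only way a pure power reverses the $H$-term), the gradient bookkeeping then fails: at the maximum $\nabla\ln(fK^{\alpha})=-q\nabla\ln\psi-r\nabla\ln\upsilon$, and $\Vert\nabla\ln(fK^{\alpha})\Vert^2_{\mathcal{L}}$ contains a term of size at least $(1+\epsilon)r^{2}\upsilon^{-2}\Vert\nabla\upsilon\Vert^{2}_{\mathcal{L}}$, which cannot be absorbed by the only available good term $r\upsilon^{-2}\Vert\nabla\upsilon\Vert^{2}_{\mathcal{L}}$ once $r>1$, and $\nabla\upsilon$ is not otherwise controlled. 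So no choice of constant exponents $q,r$ closes your ansatz.

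The paper resolves this by multiplying $f^{2}K^{2\alpha}$ not by a power of $\upsilon$ but by the Caffarelli--Nirenberg--Spruck/Ecker--Huisken function $\varphi(\upsilon^{2})=\upsilon^{2}/(2\theta-\upsilon^{2})$ of \eqref{test fun}: since $\varphi-\varphi'\upsilon^{2}=-\varphi^{2}$, the reaction term in \eqref{s5.fkvar} becomes $-2\alpha f^{3}K^{3\alpha}H\varphi^{2}$ (good sign, after which only the \emph{lower} bound $H\ge nK^{1/n}$ is used --- no upper bound on $H$ is ever needed), and the identity $4\varphi''\upsilon^{2}+6\varphi'-6\varphi^{-1}\varphi'^{2}\upsilon^{2}=\frac{4\theta}{(2\theta-\upsilon^{2})^{2}}\varphi\ge 0$ supplies exactly the good $\Vert\nabla\upsilon\Vert^{2}_{\mathcal{L}}$ term needed to absorb the $\nabla\upsilon$--$\nabla\psi$ cross term by Cauchy--Schwarz; this rational dependence on $\upsilon^{2}$ (with the constant $\theta$ built into $\varphi$) is what a pure power cannot imitate. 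Relatedly, you misplace the role of $\Lambda$: it is not used to bound $H$ from above, but enters only through $\Vert\nabla\psi\Vert^{2}_{\mathcal{L}}=\alpha fK^{\alpha}b^{ij}\langle\nabla_{i}F,e_{n+1}\rangle\langle\nabla_{j}F,e_{n+1}\rangle\le n\alpha\Lambda fK^{\alpha}$, i.e.\ to control the inverse second fundamental form appearing in the $\mathcal{L}$-norm of the cut-off's gradient. Without the $\varphi$-device (or an equivalent replacement), your sketch has a genuine gap at its key step.
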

 \begin{proof}
To estimate the upper bound of $K$, we first look at the evolution equation \eqref{speed function} of $fK^{\alpha}$, which implies
 \begin{equation}\label{s5.fk}
  \partial_t(f^2K^{2\alpha})=\mathcal{L}(f^2K^{2\alpha})-\frac{1}{2}(f^2K^{2\alpha})^{-1}\Vert\nabla (f^2K^{2\alpha})\Vert^2_{\mathcal{L}}+2\alpha f^3K^{3\alpha}H.
 \end{equation}
 The reaction term on the right hand side of \eqref{s5.fk} is a bad term. We can improve this equation by multiplying a suitable factor $\varphi$, which we described below.  For any fixed time $T_0\in(0,T]$, define $\theta$ by
 \begin{equation*}
  \theta=\sup\{\upsilon^2(p,t):\bar{u}(p,t)<N,t\in[0,T_0]\}.
 \end{equation*}
We have $1\leq \upsilon^2\leq \theta$ on the support of $\psi$ for $t\in [0,T_0]$. Using an idea of Caffarelli, Nirenberg and Spruck in \cite{caffarelli1988nonlinear}
 (see also \cite{choi2019evolution} and \cite{ecker1991interior}), define the function $\varphi=\varphi(\upsilon^2)$ by
 \begin{equation}\label{test fun}
  \varphi(\upsilon^2)=\frac{\upsilon^2}{2\theta-\upsilon^2}.
 \end{equation}
Then $\varphi$ is well-defined on the support of $\psi$ and for $t\in [0,T_0]$ and satisfies  $\frac{1}{2\theta-1}\leq\varphi\leq 1$. Write for short
\begin{equation*}
  \varphi=\varphi(\upsilon^2),\quad \varphi'=\varphi'(\upsilon^2),\quad \mathrm{and}\quad \varphi''=\varphi''(\upsilon^2).
\end{equation*}
The evolution equation \eqref{gradient function} of $\upsilon$ leads to
 \begin{eqnarray}\label{s5.var}
  \partial_t\varphi&=&\varphi'\partial_t(\upsilon^2)=\varphi'2\upsilon
  \left(\mathcal{L} \upsilon-2\upsilon^{-1}\Vert \nabla\upsilon\Vert^2_{\mathcal{L}}-\alpha fK^{\alpha}H\upsilon\right)\nonumber\\
 % &=&\varphi'\left(\mathcal{L} \upsilon^2-2\Vert\nabla\upsilon\Vert_{\mathcal{L}}^2-4\Vert\nabla\upsilon\Vert_{\mathcal{L}}^2-2\alpha fK^{\alpha}H\upsilon^2\right)\\
  &=&\varphi'\left(\mathcal{L} \upsilon^2-6\Vert\nabla\upsilon\Vert_{\mathcal{L}}^2-2\alpha fK^{\alpha}H\upsilon^2\right)\nonumber\\
  &=&\mathcal{L}\varphi(\upsilon^2)-\varphi''\Vert\nabla\upsilon^2\Vert_{\mathcal{L}}^2-
  \varphi'\left(6\Vert\nabla\upsilon\Vert_{\mathcal{L}}^2+2\alpha fK^{\alpha}H\upsilon^2\right).
 \end{eqnarray}
From \eqref{s5.fk} and \eqref{s5.var} we compute that
 \begin{eqnarray}\label{s5.fkvar}
  \partial_t(f^2K^{2\alpha}\varphi)&=&\mathcal{L}(f^2K^{2\alpha}\varphi)-2\langle\nabla\varphi,\nabla(f^2K^{2\alpha})\rangle_{\mathcal{L}}
  +2\alpha f^3K^{3\alpha}H(\varphi-\varphi'\upsilon^2)\nonumber\\
  &&-\frac{1}{2}\varphi(f^2K^{2\alpha})^{-1}\Vert\nabla (f^2K^{2\alpha})\Vert^2_{\mathcal{L}}-
  (4\varphi''\upsilon^2+6\varphi')f^2K^{2\alpha}\Vert\nabla\upsilon\Vert_{\mathcal{L}}^2.
 \end{eqnarray}
Note that $\varphi-\varphi'\upsilon^2=-\varphi^2$, so the third term on the right hand side of \eqref{s5.fkvar} introduces useful terms. We may estimate the second term on the right side of \eqref{s5.fkvar} modulo a gradient term by
\begin{align*}
  -2\langle\nabla\varphi,\nabla(f^2K^{2\alpha})\rangle_{\mathcal{L}}&+\varphi^{-1}\langle\nabla\varphi,\nabla(f^2K^{2\alpha}\varphi)\rangle_{\mathcal{L}}\nonumber\\
  =&-\langle\nabla\varphi,\nabla(f^2K^{2\alpha})\rangle_{\mathcal{L}}+\varphi^{-1}f^2K^{2\alpha}\Vert\nabla\varphi\Vert^2_{\mathcal{L}}\\
  \leq&\frac{1}{2}\varphi(f^2K^{2\alpha})^{-1}\Vert\nabla (f^2K^{2\alpha})\Vert^2_{\mathcal{L}}
  +\frac{3}{2}\varphi^{-1}f^2K^{2\alpha}\Vert\nabla\varphi\Vert^2_{\mathcal{L}}.
\end{align*}
Hence, the following inequality holds:
\begin{align*}
  \partial_t(f^2K^{2\alpha}\varphi)\leq&\mathcal{L}(f^2K^{2\alpha}\varphi)-\varphi^{-1}\langle\nabla\varphi,\nabla(f^2K^{2\alpha}\varphi)\rangle_{\mathcal{L}}
  -2\alpha f^3K^{3\alpha}H\varphi^2\\
  &-(4\varphi''\upsilon^2+6\varphi'-6\varphi^{-1}\varphi'^2\upsilon^2)f^2K^{2\alpha}\Vert\nabla\upsilon\Vert_{\mathcal{L}}^2.
 \end{align*}
To simplify the notation, we set
\begin{equation*}
  \chi:=f^2K^{2\alpha}\varphi.
\end{equation*}
 Observe that $\varphi$ satisfies
\begin{equation*}
  4\varphi''\upsilon^2+6\varphi'-6\varphi^{-1}\varphi'^2\upsilon^2=\frac{4\theta}{(2\theta-\upsilon^2)^2}\varphi.
\end{equation*}
Therefore we obtain the evolution equation
  \begin{equation}\label{s6.4}
  \partial_t\chi\leq\mathcal{L}\chi-4\theta\varphi\upsilon^{-3}\langle\nabla\upsilon,\nabla \chi\rangle_{\mathcal{L}}-2\alpha \chi fK^{\alpha}H\varphi
  -\frac{4\theta}{(2\theta-\upsilon^2)^2}\chi\Vert\nabla\upsilon\Vert^2_{\mathcal{L}}.
 \end{equation}

Next we localize the above calculation by multiplying the cut-off function $\psi(p,t)$ in space defined in \eqref{s6.1} and a cut-off function in time defined by
\begin{equation*}
  \eta=\frac{t}{1+t}.
\end{equation*}
We consider the following auxiliary function
\begin{equation}\label{s6.2}
 \omega=\eta^{2n\alpha}\psi^{4n\alpha}\chi.
\end{equation}
Then the maximum of $\omega$ is attained at some interior point $(p_0,t_0)$ for $p_0$ in the support of $\psi$ and $t_0>0$, allowing for $t_0=T_0$.

\begin{lemma}\label{s6.lem}
Along the flow \eqref{flow1}, on the support of $\psi$ and for $t>0$, we have
\begin{align}\label{s6.w1}
  \partial_t\ln\omega-\mathcal{L}\ln \omega\leq & ~ \frac{2n\alpha}{t(1+t)}+4n\alpha\psi^{-2}\Vert\nabla\psi\Vert^2_{\mathcal{L}}+\chi^{-2}\Vert\nabla \chi\Vert^2_{\mathcal{L}}+4n^2\alpha^2 fK^{\alpha}\psi^{-1}\nonumber\\
  &\quad-2\alpha fK^{\alpha}H\varphi -4\theta\varphi\upsilon^{-3}\chi^{-1}\langle\nabla\upsilon,\nabla \chi\rangle_{\mathcal{L}}-
  \frac{4\theta}{(2\theta-\upsilon^2)^2}\Vert\nabla\upsilon\Vert^2_{\mathcal{L}}.
\end{align}
\end{lemma}
\begin{proof}[Proof of Lemma \ref{s6.lem}]
By \eqref{cut-off fun1} and Proposition \ref{shift point}, the cut-off function $\psi$ satisfies
 \begin{eqnarray}\label{s6.5}
  \partial_t\psi(p,t)
    &=&\mathcal{L}\psi(p,t)+n\alpha fK^{\alpha}\upsilon^{-1}+
   K^{\alpha}\langle f(\nu)\nu+\nabla f(\nu), e_{n+1}\rangle\nonumber\\
   &\leq&\mathcal{L}\psi(p,t)+n\alpha fK^{\alpha}.
 \end{eqnarray}
 On the support of $\psi$ and for $t>0$, we combine \eqref{s6.4} - \eqref{s6.5} and obtain that
\begin{align*}
  \partial_t\ln(\eta^{2n\alpha}\psi^{4n\alpha}\chi)=& \frac{2n\alpha}{t(1+t)}+4n\alpha\psi^{-1}\partial_t\psi+\chi^{-1}\partial_t \chi\\
  \leq&\frac{2n\alpha}{t(1+t)}+4n\alpha\psi^{-1}\Big(\mathcal{L}\psi(p,t)+n\alpha fK^{\alpha}\Big) \\
  &+\chi^{-1}\Big(\mathcal{L}\chi-4\theta\varphi\upsilon^{-3}\langle\nabla\upsilon,\nabla \chi\rangle_{\mathcal{L}}-2\alpha \chi fK^{\alpha}H\varphi  -\frac{4\theta}{(2\theta-\upsilon^2)^2}\chi \Vert\nabla\upsilon\Vert^2_{\mathcal{L}}\Big)\\
  =&\mathcal{L}\ln(\eta^{2n\alpha}\psi^{4n\alpha}\chi)+\frac{2n\alpha}{t(1+t)}+4n\alpha\psi^{-2}\Vert\nabla\psi\Vert^2_{\mathcal{L}}+\chi^{-2}\Vert\nabla \chi\Vert^2_{\mathcal{L}}+4n^2\alpha^2 fK^{\alpha}\psi^{-1}\\
  &-4\theta\varphi\upsilon^{-3}\chi^{-1}\langle\nabla\upsilon,\nabla \chi\rangle_{\mathcal{L}}-2\alpha fK^{\alpha}H\varphi-
  \frac{4\theta}{(2\theta-\upsilon^2)^2}\Vert\nabla\upsilon\Vert^2_{\mathcal{L}}.
 \end{align*}
\end{proof}

Now, we continue the proof of Theorem \ref{speed estimate}. As $\omega$ attains its maximum in $M^n\times [0,T_0]$ at the point $(p_0,t_0)$, we clearly have $\omega(p_0,t_0)>0$ and so that $\ln\omega$ also attains its maximum at  $(p_0,t_0)$. This implies that
 \begin{eqnarray}\label{s5.dw}
  0=\nabla\ln\omega=4n\alpha\psi^{-1}\nabla\psi+\chi^{-1}\nabla \chi
 \end{eqnarray}
holds at  $(p_0,t_0)$. The the second and third terms on the right hand side of \eqref{s6.w1} satisfy
\begin{eqnarray*}
  4n\alpha\psi^{-2}\Vert\nabla\psi\Vert^2_{\mathcal{L}}+\chi^{-2}\Vert\nabla \chi\Vert^2_{\mathcal{L}}=4n\alpha(4n\alpha+1)\psi^{-2}\Vert\nabla\psi\Vert^2_{\mathcal{L}}
\end{eqnarray*}
at the point $(p_0,t_0)$. Applying \eqref{s5.dw} and the Cauchy-Schwarz inequality, we also estimate the second term of the second line of \eqref{s6.w1} at $(p_0,t_0)$
\begin{eqnarray*}
  -4\theta\varphi\upsilon^{-3}\chi^{-1}\langle\nabla\upsilon,\nabla \chi\rangle_{\mathcal{L}}&=&16\theta\varphi\upsilon^{-3}n\alpha\psi^{-1}\langle\nabla\upsilon,\nabla\psi\rangle_{\mathcal{L}}\\
  &\leq&\frac{4\theta}{(2\theta-\upsilon^2)^2}\Vert\nabla\upsilon\Vert^2_{\mathcal{L}}+16n^2\alpha^2\theta\upsilon^{-2}\psi^{-2}\Vert\nabla\psi\Vert^2_{\mathcal{L}}.
\end{eqnarray*}
Therefore, at $(p_0,t_0)$, we have
\begin{align}\label{s6.3}
  0\leq\partial_t\ln\omega-\mathcal{L}\ln\omega\leq&~\frac{2n\alpha}{t_0(1+t_0)}+
  4n^2\alpha^2fK^{\alpha}\psi^{-1}-2\alpha fK^{\alpha}H\varphi\nonumber\\
  &\quad +4n\alpha(4n\alpha+1+4n\alpha\theta\upsilon^{-2})\psi^{-2}\Vert\nabla\psi\Vert^2_{\mathcal{L}}.
\end{align}
Define
 \begin{equation*}
  \Lambda=\sup\{\lambda^{-1}_{\min}(p,t):\bar{u}(p,t)<N,t\in[0,T_0]\}.
 \end{equation*}
Then, on the support of $\psi$, we have
\begin{eqnarray}\label{s6.6}
  \Vert\nabla\psi\Vert^2_{\mathcal{L}}&=&\alpha fK^{\alpha}b^{ij}\langle \nabla_iF,e_{n+1}\rangle\langle \nabla_jF,e_{n+1}\rangle\nonumber\\
%  \leq\alpha fK^{\alpha}b^{ij}\sum_{m=1}^{n+1}\langle F_i,e_m\rangle\langle F_j,e_m\rangle\\
  &\leq&\alpha fK^{\alpha}b^{ij}g_{ij}\leq n\alpha fK^{\alpha}\lambda^{-1}_{\min}\leq n\alpha\Lambda fK^{\alpha}.
\end{eqnarray}
Using the facts $\upsilon\geq 1$, $H\geq nK^{\frac{1}{n}}$ and the estimate \eqref{s6.6}, we derive from \eqref{s6.3} that
\begin{align*}
  \psi^2K^{\frac{1}{n}}\varphi\leq & \frac{\psi^2}{t_0(1+t_0)fK^{\alpha}}+2n\alpha\bigg(\psi+\Lambda(4n\alpha+1+4n\alpha\theta)\bigg)
\end{align*}
holds at the point $(p_0,t_0)$. Then using $1\geq \varphi\geq\frac{1}{2\theta-1}$ and $\eta\leq 1$, we obtain
\begin{align*}
 \omega^{\frac{1}{2n\alpha}}(p_0,t_0)=&~\eta \psi^{2} f^{\frac{1}{n\alpha}}K^{\frac{1}{n}}\varphi^{\frac{1}{2n\alpha}}(p_0,t_0)\nonumber\\
 \leq &2\theta(\sup_{\mathbb{S}^n} f)^{\frac{1}{n\alpha}}\bigg(\frac{\psi^2}{(1+t_0)^2fK^{\alpha}}+2n\alpha\eta \Big(N+\Lambda(4n\alpha+1+4n\alpha\theta)\Big)\bigg)\\
 \leq &2\theta (\sup_{\mathbb{S}^n} f)^{\frac{1}{n\alpha}}\bigg(\psi^{2(n\alpha+1)}\left( \eta^{n\alpha} \psi^{2n\alpha}fK^{\alpha}\varphi^{1/2}\right)^{-1}
 +2n\alpha \Big(N+\Lambda(4n\alpha+1+4n\alpha\theta)\Big)\bigg)\nonumber\\
 \leq&2\theta (\sup_{\mathbb{S}^n} f)^{\frac{1}{n\alpha}}\bigg(N^{2(n\alpha+1)}\omega^{-1/2}
 +2n\alpha \Big(N+\Lambda(4n\alpha+1+4n\alpha\theta)\Big)\bigg).
\end{align*}
Whenever $\omega^{1/2}(p_0,t_0)\geq N^{2n\alpha}$, the last inequality yields
\begin{align*}
 \omega^{\frac{1}{2n\alpha}}(p_0,t_0)
 \leq &2\theta (\sup_{\mathbb{S}^n} f)^{\frac{1}{n\alpha}}\bigg(N^{2}+2n\alpha \Big(N+\Lambda(4n\alpha+1+4n\alpha\theta)\Big)\bigg).
\end{align*}
Since $\theta>1$, it follows that
\begin{align*}
 \omega^{\frac{1}{2n\alpha}}(p_0,t_0)
 \leq &2\theta \max\{(\sup_{\mathbb{S}^n} f)^{\frac{1}{n\alpha}},1\}\bigg(N^{2}+2n\alpha \Big(N+\Lambda(4n\alpha+1+4n\alpha\theta)\Big)\bigg).
\end{align*}
Using again $\varphi\geq\frac{1}{2\theta-1}$, we conclude that for any $(p,t)\in M^n\times[0,T_0]$,
\begin{align}\label{s6.speed2}
  \eta K^{\frac{1}{n}}\psi^{2}(p,t)=&~\omega^{\frac{1}{2n\alpha}}f^{-\frac{1}{n\alpha}}\varphi^{-\frac{1}{2n\alpha}}(p,t)\nonumber\\
  \leq&\omega^{\frac{1}{2n\alpha}}(p_0,t_0)(\min_{\mathbb{S}^n}f)^{-\frac{1}{n\alpha}}(2\theta-1)^{\frac{1}{2n\alpha}}\nonumber\\
  \leq&~(2\theta)^{1+\frac{1}{2n\alpha}}  \max\{(\sup_{\mathbb{S}^n} f)^{\frac{1}{n\alpha}},1\} (\min_{\mathbb{S}^n}f)^{-\frac{1}{n\alpha}}\nonumber\\
  &\quad \times \bigg(N^{2}+2n\alpha \Big(N+\Lambda(4n\alpha+1+4n\alpha\theta)\Big)\bigg).
\end{align}
In particular, \eqref{s6.speed2} holds for $t=T_0$.  As $T_0$ is arbitrary, \eqref{s6.t1} follows by setting $T_0$ as $t$.
\end{proof}

\section{Long-time existence}\label{sec5}
 In this section, we establish the long-time existence of the complete non-compact anisotropic $\alpha$-Gauss curvature flow \eqref{flow1} and complete the proof of  Theorem \ref{main thm}. The proof is based on the a priori estimates in \S\ref{sec2} - \S\ref{sec4} and it can be done by adapting the argument in \cite[\S 5]{choi2019evolution}, so we only sketch the proof.

To establish the long time existence of the smooth solution, the local estimates for higher order derivatives of the solutions are needed. We recall the following result which is a special case of Theorem 6 in \cite{And04}.
\begin{theorem}[\cite{And04}]\label{s6.thm1}
  Let $\Omega$ be a domain in $\mathbb{R}^n$. Let $u\in C^4(\Omega\times [0,T))$ be a function satisfying
  \begin{equation*}
    \frac{\partial}{\partial t}u=F(D^2u,Du,u,x,t),
  \end{equation*}
  where $F$ is $C^2$ and is elliptic, i.e., $\lambda I\leq (\dot{F}^{ij})\leq \Lambda I$ for some constants $\Lambda>\lambda>0$. Suppose that $F$ can be written as $F=\phi(G(D^2u,Du,u,x,t))$, where $G$ is concave with respect to $D^2u$ and $\phi$ is an increasing function on the range of $G$. Then in any relatively compact $\Omega'\subset\Omega$ and for any $\tau\in (0,T)$ we have
  \begin{equation*}
    \|u\|_{C^{2,\beta}(\Omega'\times (\tau,T))}\leq C,
  \end{equation*}
  where $\beta\in (0,1)$ depends on $n,\lambda$ and $\Lambda$, and $C$ depends on $\lambda,\Lambda,  \|u\|_{C^{2}(\Omega\times [0,T))},\tau, \mathrm{dist}(\Omega',\partial\Omega)$ and the bounds on the first and second derivatives of $G$.
\end{theorem}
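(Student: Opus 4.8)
The statement is the local parabolic Evans--Krylov--Safonov regularity estimate, specialized to operators of composite form $F=\phi\circ G$ with $G$ concave in the Hessian variable and $\phi$ increasing; the plan is to follow that classical circle of ideas, with care about localization. First I would reduce to an interior estimate: by a standard covering argument it suffices to fix $(x_0,t_0)\in\Omega'\times(\tau,T)$ and a parabolic cylinder $Q:=B_{2\rho}(x_0)\times(t_0-(2\rho)^2,t_0]$, small enough to lie in $\Omega\times(\tau/2,T)$ and to have $2\rho<\operatorname{dist}(\Omega',\partial\Omega)$, and then to bound $[D^2u]_{C^{\beta,\beta/2}(Q_\rho)}$ and $[\partial_t u]_{C^{\beta,\beta/2}(Q_\rho)}$, where $Q_\rho=B_\rho(x_0)\times(t_0-\rho^2,t_0]$. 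Here one uses only that $\|u\|_{C^2(\Omega\times[0,T))}<\infty$, so that $Du$, $D^2u$, and $\partial_t u=F(D^2u,Du,u,x,t)$ are already bounded on $Q$.

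The next step is a Hölder bound on $Du$. Differentiating the equation in a fixed spatial direction, $v=D_e u$ solves a linear uniformly parabolic equation
\[
\partial_t v=\dot F^{ij}D_{ij}v+\text{(bounded lower-order terms)},\qquad \lambda I\le(\dot F^{ij})\le\Lambda I,
\]
whose coefficients are controlled by $\|u\|_{C^2}$ and the first and second derivatives of $G$ and $\phi$. The interior Krylov--Safonov estimate then yields $[Du]_{C^{\beta_0,\beta_0/2}(Q_{3\rho/2})}\le C$ with $\beta_0=\beta_0(n,\lambda,\Lambda)$; together with $F\in C^2$ this makes the ``frozen'' operator depend Hölder-continuously on $(x,t)$, which is what the second-order estimate will require.

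The core is the Evans--Krylov oscillation estimate for $D^2u$. I would write the equation as $\partial_t u=\phi(w)$ with $w:=G(D^2u,Du,u,x,t)$; the decisive structural fact is that, since $\phi$ is increasing, the super-level sets $\{M:F(M)\ge c\}=\{M:G(M)\ge\phi^{-1}(c)\}$ are convex because $G$ is concave in $M$, and this is exactly what the Evans--Krylov argument uses in place of genuine concavity of $F$. One then controls $\operatorname{osc}_{Q_{\theta\rho}}D^2u$ by a fixed fraction $(1-\delta)\operatorname{osc}_{Q_\rho}D^2u$ plus a lower-order error from the $(Du,u,x,t)$-dependence, the latter absorbed using the previous step; iterating this decay gives $[D^2u]_{C^{\beta,\beta/2}(Q_\rho)}\le C$ with $\beta\in(0,\beta_0]$ depending on $n,\lambda,\Lambda$ and $C$ depending on $\lambda,\Lambda,\|u\|_{C^2},\tau,\operatorname{dist}(\Omega',\partial\Omega)$ and the bounds on $DG,D^2G$. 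The parabolic version of this decay, localized via the Krylov--Safonov weak-Harnack machinery with cut-off functions, in the generality of the composite operator $F=\phi\circ G$, is precisely the statement of [And04, Theorem 6].

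Finally, with $D^2u\in C^{\beta,\beta/2}$ and $u,Du\in C^{1,\beta}$ on $Q_\rho$ and $F\in C^2$, the equation $\partial_t u=F(D^2u,Du,u,x,t)$ immediately gives $\partial_t u\in C^{\beta,\beta/2}$ on a slightly smaller cylinder, and covering $\Omega'\times(\tau,T)$ completes the proof. The hard part is the third step: one must localize the Evans--Krylov oscillation-decay argument so the constants do not see the global geometry of $\Omega$, simultaneously accommodate the explicit $(x,t)$-dependence, and exploit that ``$\phi$ increasing plus $G$ concave'' suffices --- which is exactly what makes the estimate applicable to Gauss-curvature-type speeds, where $G$ can be taken to be a concave function of $D^2u$ (built from $(\det D^2u)^{1/n}$) and $\phi$ a power.
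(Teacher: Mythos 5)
The statement you are asked about is not proved in the paper at all: it is imported verbatim as a special case of Theorem~6 of Andrews' paper \cite{And04}, and the authors' ``proof'' consists of the citation. Your outline correctly identifies the classical circle of ideas (interior reduction, Krylov--Safonov for $Du$ via differentiating the equation, an Evans--Krylov type oscillation decay for $D^2u$, then H\"older continuity of $\partial_t u$ from the equation), so as a description of where the result lives it is sound. But as a proof it has a genuine gap, and the gap is exactly at the decisive step: in your third paragraph you assert that convexity of the super-level sets of $F$ (equivalently the composite structure $F=\phi\circ G$ with $G$ concave, $\phi$ increasing) ``is exactly what the Evans--Krylov argument uses in place of genuine concavity,'' and then you close the step by saying that the localized parabolic version ``is precisely the statement of [And04, Theorem 6].'' That is circular --- you are invoking the theorem you set out to prove --- and the assertion it rests on is not routine. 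The standard parabolic Evans--Krylov/Krylov argument uses concavity of $F$ in $D^2u$ (or, after rewriting the equation as $G(D^2u,\dots)=\phi^{-1}(\partial_t u)$, joint concavity in $(D^2u,\partial_t u)$, which would require $\phi^{-1}$ convex) to obtain the one-sided inequality $F(M_2)-F(M_1)\le \dot F^{ij}(M_1)(M_2-M_1)_{ij}$ that feeds the weak Harnack inequality applied to second-difference quotients. Mere quasi-concavity (convex super-level sets) does not give this inequality, and showing that the composite structure nevertheless yields the oscillation-decay estimate --- with the time derivative handled correctly --- is precisely the nontrivial content of Andrews' theorem, not a cosmetic variant of Evans--Krylov.

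So the honest assessment is: your steps 1, 2 and 4 are standard and fine (with constants depending on the quantities listed in the statement), but step 3 is not an argument, it is a restatement of the result. To make this a proof one would either have to reproduce Andrews' actual mechanism for exploiting $F=\phi(G)$ in the parabolic setting, or supply an independent derivation of the oscillation decay under quasi-concavity; neither is sketched. Since the paper itself treats the theorem as a black box from \cite{And04}, citing it (as the authors do) is legitimate, but then the proposal should be presented as a reduction-to-the-literature rather than as a proof.
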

The advantage of the above theorem is that it allows to relax the concavity hypothesis of the usual regularity theorem for fully nonlinear parabolic equation. In our case the equation \eqref{s2.flowu} is not concave with respect to $D^2u$ but is covered by Theorem \ref{s6.thm1}.  By the curvature lower bound in Theorem \ref{principle curvature} and the speed bound in Theorem \ref{speed estimate}, we have two-sides curvature bound from above and below. The estimate on $\upsilon$ in Theorem \ref{Gradient estimate} yields the $C^1$ estimate. Therefore, the ellipticity constants for the equation \eqref{s2.flowu} are controlled and the result in Theorem \ref{s6.thm1} applies to give the local $C^{2,\beta}$ estimate of the solution. The higher order derivatives estimate follows from the standard regularity theory.

 We recall some further notations.
  \begin{itemize}
            \item For a set $U\subset\mathbb{R}^{n+1}$, we denote the convex hull of $U$ by
      \[\text{Conv}(U)=\left\{tx+(1-t)y:x,y\in U, t\in[0,1]\right\}.\]
      \item Given a convex complete hypersurface $\Sigma$. If a set $V$ is a subset of $\text{Conv}(\Sigma)$, we say $V$ is enclosed by $\Sigma$ and use the notation $V\preceq\Sigma$. In particular, if $V\cap\Sigma=\emptyset$ and $V\preceq\Sigma$, we use  $V\prec\Sigma$.
      \item For a convex hypersurface $\Sigma$ and constant $\epsilon>0$, we use $\Sigma^{\epsilon}$ to denote its $\epsilon$-envelope
      \[\Sigma^{\epsilon}=\{Y\in\mathbb{R}^{n+1}:d(Y,\Sigma)=\epsilon, Y\notin\text{Conv}(\Sigma)\},\]
      where $d$ is the distance function in $\mathbb{R}^{n+1}$.
  \end{itemize}
 For the complete, non-compact locally uniformly convex hypersurface $\Sigma_0$ given in Theorem \ref{main thm}, Theorem \ref{cpt sup} implies that it is graph of a convex function $u_0$ over a domain $\Omega\subset\mathbb{R}^n$, that is $\Sigma_0=\{(x,u_0(x)):x\in\Omega\}$ for some function $u_0:\Omega\to \mathbb{R}$. The domain $\Omega$ may be bounded or unbounded. We first prove the existence of complete solution $\Sigma_t$ on $t\in(0,T)$, where $T$ depends on the given domain $\Omega$.

\begin{theorem}\label{longtime1}
  Let $\Sigma_0=\{(x,u_0(x)):x\in\Omega\}$ be a complete, non-compact locally uniformly convex hypersurface in $\mathbb{R}^{n+1}$. Assume that $B_{R}(x_0)\subset\Omega$ for some $R>0$, where $B_{R}(x_0)$ is an $n$-ball of radius $R$ centered at $x_0$.
  Then, given an immersion $F_0: M^n\to\mathbb{R}^{n+1}$ with $F_0(M^n)=\Sigma_0$, there
  is a solution $F: M^n\times(0,T)\to\mathbb{R}^{n+1}$ of (\ref{flow1}) for some $T\geq ((n\alpha+1)\sup_{\mathbb{S}^n} f)^{-1}R^{1+n\alpha}$ such that for
  each $t\in(0,T)$, the image $\Sigma_t:=F(M^n,t)$ is a strictly convex smooth complete graph of a function $u(\cdot,t):\Omega_t\to\mathbb{R}$
  defined on a convex open $\Omega_t\subset\Omega$.
  %, and also $u(\cdot,t)$ and $\Omega_t$ satisfy the conditions of $u_0$ and $\Omega$ determined by $\Sigma_0$ in Theorem \ref{main thm}.
\end{theorem}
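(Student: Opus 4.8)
The plan is to follow the strategy of \cite[\S5]{choi2019evolution}, suitably modified for the anisotropy: approximate the complete non-compact $\Sigma_0$ by a sequence of smooth closed strictly convex hypersurfaces, run the \emph{closed} anisotropic $\alpha$-Gauss curvature flow from each of them, feed in the local a priori estimates of \S\ref{sec2}--\S\ref{sec4}, and pass to the limit. First I would reduce to the case that $\Sigma_0$ is itself smooth and strictly convex, recovering the general locally uniformly convex case at the end by a further approximation (using that, for $t$ bounded away from $0$, the bounds of Theorems \ref{principle curvature} and \ref{speed estimate} are independent of the initial curvature). By Theorem \ref{cpt sup}, $\Sigma_0$ is the graph of a convex function $u_0$ over $\Omega$, and the convex region $\mathrm{Conv}(\Sigma_0)$ above this graph is a ``cup'' with $\inf_\Omega u_0>0$ and with $\{u_0\le c\}$ bounded for every $c$. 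For each large integer $k$ I would take $\widehat{\Sigma}_0^{(k)}$ to be a smooth strictly convex bounded body whose intersection with $\{x_{n+1}<k-1\}$ equals $\mathrm{Conv}(\Sigma_0)\cap\{x_{n+1}<k-1\}$ and which is capped off strictly convexly above; these can be arranged to be nested, to exhaust $\mathrm{Conv}(\Sigma_0)$, and so that $\Sigma_0^{(k)}:=\partial\widehat{\Sigma}_0^{(k)}\to\Sigma_0$ in $C^\infty_{\mathrm{loc}}$ below every fixed height.

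Next I would flow each approximation. By Andrews \cite{andrews2000motion}, the closed flow \eqref{flow1} from $\Sigma_0^{(k)}$ has a unique smooth strictly convex solution $\Sigma_t^{(k)}$ on a maximal interval $[0,T^{(k)})$, contracting to a point as $t\to T^{(k)}$. The crucial point is the lifespan bound $T^{(k)}\ge T:=\big((n\alpha+1)\sup_{\mathbb{S}^n}f\big)^{-1}R^{1+n\alpha}$, which I would obtain from the inradius. Since $\overline{B_{R'}(x_0)}\subset\Omega$ for every $R'<R$, for all large $k$ the body $\widehat{\Sigma}_0^{(k)}$ contains a Euclidean ball $B_{R'}(y_0)$, where $y_0=(x_0,h_0)$ with $h_0$ slightly above $\max_{\overline{B_{R'}(x_0)}}u_0$. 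Writing $\rho_k(t)$ for the inradius of $\widehat{\Sigma}_t^{(k)}$: at a point where a maximal inscribed ball touches $\Sigma_t^{(k)}$ from inside, every principal curvature is at most $\rho_k(t)^{-1}$, so the speed there is at most $(\sup_{\mathbb{S}^n}f)\,\rho_k(t)^{-n\alpha}$, and the standard evolution inequality for the inradius gives
\begin{equation*}
  \frac{d}{dt}\rho_k(t)\ \ge\ -(\sup_{\mathbb{S}^n}f)\,\rho_k(t)^{-n\alpha},
  \qquad\text{hence}\qquad
  \rho_k(t)^{1+n\alpha}\ \ge\ (R')^{1+n\alpha}-(n\alpha+1)(\sup_{\mathbb{S}^n}f)\,t .
\end{equation*}
Thus $\widehat{\Sigma}_t^{(k)}$ has positive inradius, so the flow has not become extinct, for all $t<\big((n\alpha+1)\sup_{\mathbb{S}^n}f\big)^{-1}(R')^{1+n\alpha}$; letting $R'\to R$ yields $T^{(k)}\ge T$ for all large $k$.

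Then I would pass to the limit. Fix $0<\delta<T$ and a height $N$; on the region $\{\bar{u}^{(k)}<N\}\times[\delta,T-\delta]$, Theorems \ref{Gradient estimate}, \ref{principle curvature} and \ref{speed estimate} give bounds on the gradient function, on $\lambda_{\min}^{-1}$ and on the speed that are uniform in $k$ (using $\Sigma_0^{(k)}\to\Sigma_0$ smoothly below height $N$, and that for $t\ge\delta$ the last two bounds do not depend on the initial curvature). Hence the graph functions $u^{(k)}(\cdot,t)$ solve \eqref{s2.flowu} with uniformly controlled ellipticity, and Theorem \ref{s6.thm1} together with the standard parabolic bootstrap yields uniform $C^\infty_{\mathrm{loc}}$ estimates on such regions. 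Since the enclosed bodies $\widehat{\Sigma}_t^{(k)}$ are nested (by the comparison principle for the closed flow) and exhaust a convex region, the limit $\Sigma_t:=\lim_k\Sigma_t^{(k)}$ exists, is a smooth strictly convex solution of \eqref{flow1} on $(0,T)$, and is the graph of a convex $u(\cdot,t)$ over an open convex $\Omega_t\subset\Omega$; it is complete because $\partial_t u\ge0$ forces $u(\cdot,t)\ge u_0\to+\infty$ at $\partial\Omega$, and the uniform estimates give $\Sigma_t\to\Sigma_0$ as $t\to0$. Finally, a general locally uniformly convex $\Sigma_0$ is handled by approximating it with smooth strictly convex hypersurfaces and repeating the above on $[\delta,T-\delta]$.

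I expect the main obstacle to lie in this second step and the limiting argument rather than in any single estimate: one must build the closed approximations with the nestedness and $C^\infty_{\mathrm{loc}}$ convergence properties, justify the evolution inequality for the inradius and the comparison principle for the degenerate-parabolic closed flow \eqref{flow1}, and check that the limiting hypersurface is genuinely complete and graphical over a convex domain rather than degenerating. The interior a priori estimates themselves are used here as black boxes, being exactly the content of Theorems \ref{Gradient estimate}, \ref{principle curvature} and \ref{speed estimate}.
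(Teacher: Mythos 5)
Your overall strategy --- approximate $\Sigma_0$ by closed convex hypersurfaces agreeing with it below a large height, run the closed anisotropic flow of Andrews \cite{andrews2000motion}, bound the lifespan from below by a shrinking-ball/inradius argument, and pass to the limit using the interior estimates of Theorems \ref{Gradient estimate}, \ref{principle curvature} and \ref{speed estimate} --- is the same as the paper's, and your inradius ODE is an acceptable variant of the paper's fixed-center comparison with the shrinking sphere $\partial B^{n+1}_{\rho_j(t)}(X_0)$. The genuine gap is in your choice of approximating hypersurfaces. You cap the body off above height $k-1$; but nothing in your argument guarantees that, for positive times, the portion of the evolving closed hypersurface lying in the support of the cut-off $\psi_\beta$ (i.e.\ at heights below $N$) still satisfies $\langle\nu,e_{n+1}\rangle\le 0$. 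Points of the cap have upward-pointing normals and therefore move downward under \eqref{flow1}, and even if one shows the cap stays high (which itself requires a barrier argument), a boundary point of the shrunken body at height below $N$ could a priori acquire a normal with small positive vertical component. This is not cosmetic: the local estimates you invoke are proved only on graphical pieces --- the gradient function $\upsilon=-\langle\nu,e_{n+1}\rangle^{-1}$ must be positive and finite on $\mathrm{supp}\,\psi_\beta$, and the sign property of Proposition \ref{shift point}, namely $\langle f(\nu)\nu+\bar{\nabla} f(\nu),e_{n+1}\rangle\le0$, is only available where $\langle\nu,e_{n+1}\rangle\le0$ --- so without this step you cannot feed Theorems \ref{Gradient estimate}--\ref{speed estimate} into your compactness argument.

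The paper circumvents exactly this point by a different approximation: it reflects (doubles) $\Sigma_0\cap(\mathbb{R}^n\times[0,j])$ across the hyperplane $\{x_{n+1}=j\}$ and smooths by the $1/j$-envelope; by uniqueness the solution $\Gamma^j_t$ remains symmetric about that hyperplane, and for a convex body symmetric about $\{x_{n+1}=j\}$ every boundary point strictly below the hyperplane automatically has $\langle\nu,e_{n+1}\rangle\le0$, so the lower half is a graph for all time and the cut-off arguments apply verbatim. This construction has a second advantage you would otherwise have to supply separately: the doubled, enveloped initial data are only $C^{1,1}$, and Andrews' viscosity-solution existence theorem \cite[Theorem 15]{andrews2000motion} handles them directly (as the paper notes, the modified flow \eqref{flow1} differs from \eqref{flow} only by a tangential diffeomorphism), so the merely locally uniformly convex $\Sigma_0$ of Theorem \ref{cpt sup} is treated in one pass, without your preliminary reduction to smooth $\Sigma_0$ and the final, only sketched, second approximation layer. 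To repair your version you would either have to prove that the capped solutions remain graphical below height $N$ up to time $T$ (for instance by a continuity argument run simultaneously with the gradient estimate, keeping track of how the admissible region depends on $k$), or simply switch to the reflection construction.
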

\begin{proof}
  We will obtain a solution
  $\Sigma_t:=\{(x,u(x,t)):x\in\Omega_t\subset\mathbb{R}^n\}$ as a limit of an approximating sequence $\Sigma_t^j$. By Theorem \ref{cpt sup}, $\Sigma_0=\{(x,u_0(x)):x\in\Omega\}$ for some function $u_0:\Omega\to \mathbb{R}$. Assume that $\inf_{\Omega}u_0=0$. For each $j\in\mathbb{N}$, we reflect $\Sigma_0\cap(\mathbb{R}\times[0,j])$ over the $j$-level hyperplane to obtain a uniformly convex closed hypersurface $\overline{\Gamma}^j_0$, that is
  \[\overline{\Gamma}^j_0=\{(x,h)\in\mathbb{R}^{n+1}:h\in\{u_0(x),2j-u_0(x)\}, x\in\Omega, u_0(x)\leq j\}.\]
  Since $\overline{\Gamma}^j_0$ fails to be smooth, we approximate $\overline{\Gamma}^j_0$ by its ${1}/{j}$-envelope $\Gamma_0^j:=(\overline{\Gamma}^j_0)^{1/j}$, which is a uniformly convex closed hypersurface of class $C^{1,1}$. Then ,by \cite[Theorem 15]{andrews2000motion}, there is a unique closed and convex viscosity solution $\Gamma_t^j$ of \eqref{flow1} with initial data $\Gamma^j_0$ defined for $t\in(0,T_j)$, where $T_j$ is its maximal existing time. Note that the short time existence result in \cite[Theorem 15]{andrews2000motion} is proved for anisotropic Gauss curvature flow \eqref{flow}, it still holds for the modified flow \eqref{flow1} as they are equivalent up to a tangential diffeomorphism.  In addition, the symmetry of $\Gamma^j_t$ with the hyperplane $\mathbb{R}^n\times\{j\}$ can be obtained by the uniqueness of solution. Thus its lower half $\Sigma^j_t$
  is a graph of a function $u^j(\cdot,t)$ defined on a convex set $\Omega^j_t\subset\mathbb{R}^n$. Namely,
  \[\Sigma^j_t:=\Gamma_t^j\cap(\mathbb{R}^n\times[0,j])=\{(x,u^j(x,t)):x\in\Omega^j_t\}.\]
  We set
  \[\Sigma_t=\partial\{\cup_{j\in\mathbb{N}}\text{Conv}(\Gamma^j_t)\},\ \Omega_t=\cup_{j\in\mathbb{N}}\Omega^j_t,\ t\in[0,T),\]
  where $T=\sup_{j\in\mathbb{N}}T_j$.

  The speed of the flow \eqref{flow1} depends on the curvature and a function of the unit normal. So the comparison theorem still holds for the flow \eqref{flow1}. It follows that we can show $\Gamma^j_t\preceq\Gamma^{j+1}_t\preceq \Sigma_0$, which yields $u_0(y)\leq u^{j+1}(y,t)\leq u^j(y,t)$ and $T_j\leq T_{j+1}$.

  On the other hand, $B_{R}(x_0)\subset\Omega$ means that there is a constant $h_0$ satisfying $B_{R}^{n+1}(X_0)\preceq\Sigma_0$, where $B_{R}^{n+1}(X_0)$ is an $(n+1)$-ball of radius $R$ centered at the point $X_0=(x_0,h_0)$. Choose $j\geq R+h_0$ such that $B^{n+1}_R(X_0)\preceq\Gamma^j_0$ holds. Since $\partial B^{n+1}_{\rho_j}(X_0)$ is a super-solution of (\ref{flow1}), where
  \[\rho_j(t)=\left(R^{1+n\alpha}-(1+n\alpha)t\sup_{\mathbb{S}^n} f\right)^{\frac{1}{1+n\alpha}}.\]
  Hence, the comparison principle leads to $\partial B^{n+1}_{\rho_j(t)}(X_0)\preceq\Gamma^j_t$. Thus,
  $$T\geq T_j\geq((n\alpha+1)\sup_{\mathbb{S}^n} f)^{-1}R^{1+n\alpha}.$$

  Next, by the same manner as in the proof in \cite[Theorem 5.1]{choi2019evolution}, we conclude that the a priori estimates in \S \ref{sec2} - \S \ref{sec4} hold for $\Gamma^j_t$ and this implies that $\Sigma_t^j$ is a smooth strictly convex graph of function $u^j$. Finally, we can pass $\Gamma^j_t$ and $u^j$ to the limit  to get the desired result.
\end{proof}

To finish the proof of Theorem \ref{main thm}, we need to show that  each $\Sigma_t$ remains as a graph over the same domain $\Omega$ for all $t\in(0,T)$, which implies $T=+\infty$ independently from the domain $\Omega$. This can be proved by constructing an appropriate barrier as in \cite[Theorem 5.4]{choi2019evolution}).
\begin{theorem}\label{longtime2}
  Let $\Omega$, $u_0$ and $\Sigma_0$ satisfy the condition in Theorem \ref{longtime1}. Assume that $\Sigma_t=\{(x,u(x,t)):\Omega_t\}$, $t\in(0,T)$, is the
  strictly convex smooth complete graph solution of \eqref{flow1} such that $\Omega_t$, $u(\cdot,t)$ and $\Sigma_t$ satisfy the conditions of
  $\Omega$, $u_0$ and $\Sigma_0$ in Theorem \ref{longtime1}. Then, for any closed ball $\overline{B_{R_0}(y_0)}\subset\Omega$ and any $t_0\in(0,T)$, there
  holds $B_{R_0}(y_0)\subset\Omega_{t_0}$.
\end{theorem}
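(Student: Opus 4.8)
The plan is to build an inner barrier, in the spirit of \cite[Theorem~5.4]{choi2019evolution}, and compare it with $\Sigma_t$. Fix $\overline{B_{R_0}(y_0)}\subset\Omega$ and $t_0\in(0,T)$. Since $\Omega$ is open, choose $R_1>R_0$ with $\overline{B_{R_1}(y_0)}\subset\Omega$, and set $M:=\sup_{\overline{B_{R_1}(y_0)}}u_0<+\infty$. Because the flow \eqref{flow1} moves $\Sigma_t$ in the direction $-\nu_f$, which satisfies $\langle\nu_f,\nu\rangle=f>0$ and hence points into the convex hull, the convex bodies $\mathrm{Conv}(\Sigma_t)$ and the closed domains $\overline{\Omega_t}$ are nonincreasing in $t$; so it is enough to exhibit, for this $t_0$, a convex body $V$ with $V\preceq\Sigma_{t_0}$ whose vertical projection contains $\overline{B_{R_0}(y_0)}$, for then $\overline{B_{R_0}(y_0)}\subseteq\overline{\Omega_{t_0}}$ and, $\Omega_{t_0}$ being open and convex, $B_{R_0}(y_0)\subseteq\Omega_{t_0}$.

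For the barrier I would take $\mathcal{B}_0$ to be a closed convex hypersurface of class $C^{1,1}$, invariant under the rotations of $\mathbb{R}^{n+1}$ about the vertical line $\{y_0\}\times\mathbb{R}$, shaped as a capped cylinder of radius $R_1$ about that axis whose convex body lies in $\overline{B_{R_1}(y_0)}\times[M,+\infty)$, the length $L$ of the cylindrical portion being left large (it will be fixed at the very end). Then $\mathcal{B}_0\cap\Sigma_0=\emptyset$ and $\mathcal{B}_0\preceq\Sigma_0$. By \cite[Theorem~15]{andrews2000motion}, which applies to the modified flow \eqref{flow1} exactly as in the proof of Theorem~\ref{longtime1}, there is a unique closed convex viscosity solution $\mathcal{B}_t$ of \eqref{flow1} with initial data $\mathcal{B}_0$. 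Comparing $\mathcal{B}_t$ with the approximating closed solutions $\Gamma^j_t$ of Theorem~\ref{longtime1} --- one has $\mathcal{B}_0\preceq\Gamma^j_0$ for $j$ large --- and letting $j\to\infty$, the comparison principle gives $\mathcal{B}_t\preceq\Sigma_t$ wherever both are defined; in particular the vertical projection of $\mathcal{B}_{t_0}$ is contained in $\overline{\Omega_{t_0}}$.

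It then remains to check that this projection still contains $\overline{B_{R_0}(y_0)}$. The speed of \eqref{flow1} satisfies $f(\nu)K^{\alpha}\le(\sup_{\mathbb{S}^n}f)\,K^{\alpha}$, so comparing with the faster --- isotropic, hence rotationally equivariant --- flow $\partial_tF=-(\sup_{\mathbb{S}^n}f)K^{\alpha}\nu$ started from the same $\mathcal{B}_0$, its solution $\Psi_t$ satisfies $\Psi_t\preceq\mathcal{B}_t$ and remains rotationally symmetric about $\{y_0\}\times\mathbb{R}$, so its projection is a round ball $\overline{B_{r(t)}(y_0)}$. The cylindrical part of $\mathcal{B}_0=\Psi_0$ has a vanishing principal curvature, so under the Gauss curvature flow it does not move until reached by the curvature generated near the caps; since this erosion propagates inward at finite speed, taking $L$ large enough (depending only on $t_0,n,\alpha,\sup_{\mathbb{S}^n}f$ and $R_1$) keeps the middle of $\Psi_t$ at radius $R_1$ throughout $[0,t_0]$ --- in particular $\mathcal{B}_t$ and $\Psi_t$ are defined on all of $[0,t_0]$ and $r(t_0)=R_1$. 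Hence $\overline{B_{R_0}(y_0)}\subset\overline{B_{R_1}(y_0)}=\mathrm{proj}(\Psi_{t_0})\subseteq\mathrm{proj}(\mathcal{B}_{t_0})\subseteq\overline{\Omega_{t_0}}$, and therefore $B_{R_0}(y_0)\subseteq\Omega_{t_0}$.

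The step I expect to be the main obstacle is this last one: quantitatively controlling the barrier over the a priori possibly large interval $[0,t_0]$, i.e.\ ensuring the waist of $\Psi_t$ is not eroded below radius $R_0$. This is exactly what forces a long cylinder instead of a shrinking ball, and it rests on the finite speed of propagation --- the preservation of flat regions --- for the Gauss curvature flow of convex hypersurfaces. By contrast the anisotropy plays no essential role here: it is absorbed once and for all into the crude bound $f\le\sup_{\mathbb{S}^n}f$, which reduces the delicate estimate to the rotationally symmetric isotropic case treated in \cite{choi2019evolution}.
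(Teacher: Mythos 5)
There is a genuine gap, and it sits exactly where you flag "the main obstacle": the claim that the cylindrical part of your barrier is not eroded — that "under the Gauss curvature flow it does not move until reached by the curvature generated near the caps", that this erosion "propagates inward at finite speed", and hence that for $L$ large one even has $r(t_0)=R_1$. This is not a citable fact for the $K^{\alpha}$ flow for all $\alpha>0$. Persistence of degenerate regions is known only in special situations (e.g.\ flat \emph{sides} for the classical Gauss curvature flow in low dimensions, Daskalopoulos--Hamilton type results), and a cylindrical $K=0$ region is not a flat side; for small $\alpha$ one should even worry about instantaneous positivity of curvature, in analogy with fast diffusion. In particular, \cite{choi2019evolution} does not prove any "finite speed of propagation" statement that your reduction "to the rotationally symmetric isotropic case" could invoke: their Theorem 5.4 is itself proved by constructing a rather delicate explicit supersolution (barrier), and that construction is precisely what the present paper says it adapts ("the change of constants involving the anisotropy $f$ and the definition of supersolution of the modified flow"). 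So the step you defer to the literature is the whole content of the theorem, and it is missing from your argument.

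To see that it cannot be dismissed as routine, note that the naive way to make your intuition rigorous — an explicit rotationally symmetric "sausage" supersolution whose lateral part stays a round cylinder — fails: a supersolution must move with normal speed at least $(\sup_{\mathbb{S}^n}f)K^{\alpha}$ \emph{everywhere}, and at the junction of cap and cylinder the curvature is comparable to that of the sphere of radius $R_1$, which forces the whole cylindrical part to shrink at the shrinking-sphere rate; this only protects the waist for a time of order $R_1^{1+n\alpha}$, independent of $L$, so lengthening the cylinder buys nothing. Likewise, simple graph barriers of the form $M+\lambda t+\phi(|x-y_0|)$ with $\phi$ blowing up at $|x-y_0|=R_1$ are supersolutions of \eqref{s2.flowu} only for a restricted range of $\alpha$ (roughly $\alpha>1/2$), so the construction is genuinely $\alpha$-sensitive. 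A correct completion must either reproduce (and anisotropically adjust) the barrier of \cite[Theorem 5.4]{choi2019evolution}, or replace your propagation claim by an actual quantitative estimate on the waist of the evolving capped cylinder (for instance via a total-curvature/volume-loss argument, using $\int_{\Sigma}K\,dA\le|\mathbb{S}^n|$ and rotational symmetry, which does give $\mathrm{proj}(\Psi_{t_0})\supset B_{R_1-\varepsilon}(y_0)$ for $L$ large when $\alpha\le 1$, but still needs work for $\alpha>1$). The remaining ingredients of your sketch — the reduction to an inner barrier, $\mathcal{B}_0\preceq\Gamma^j_0$ for large $j$, the comparison $\Psi_t\preceq\mathcal{B}_t\preceq\Sigma_t$, and absorbing the anisotropy into $\sup_{\mathbb{S}^n}f$ — are consistent with the paper's intended argument.
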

\begin{proof}
 The proof is similar as in \cite[Theorem 5.4]{choi2019evolution}) with some minor modification including the change of constants involving the anisotropy function $f$ and the definition of supersolution of the modified flow \eqref{flow1}. We omit the details here and refer the readers to \cite[Theorem 5.4]{choi2019evolution}).
 \end{proof}

\bibliographystyle{plain} % Estilo de Bibliografia
%\bibliography{referencias.bib} % Lista de Referências Bibliográficas

\end{document}